\newcommand{\ch}{\operatorname{ch}}
\newcommand{\supp}{\operatorname{supp}}
\newcommand{\cali}{{\mathcal I}}
\newcommand{\calo}{{\mathcal O}}
\newcommand{\calq}{{\mathcal Q}}
\newcommand{\calt}{{\mathcal T}}
\newcommand{\inext}{{\mathcal E}{\it xt}}
\DeclareMathOperator{\Fitt}{Fitt}
\DeclareMathOperator{\coker}{coker}
\DeclareMathOperator{\codim}{{codim}}
\DeclareMathOperator{\rank}{{rk}}
\DeclareMathOperator{\length}{\operatorname{length}}
\DeclareMathOperator{\ext}{{Ext}}
\DeclareMathOperator{\Bour}{Bour}
\DeclareMathOperator{\indeg}{indeg}
\DeclareMathOperator{\gpdim}{gpdim}
\DeclareMathOperator{\Sing}{Sing}
\DeclareMathOperator{\Syz}{Syz}
\DeclareMathOperator{\Mor}{Hom}
\DeclareMathOperator{\Proj}{Proj}
\DeclareMathOperator{\Ann}{\operatorname{Ann}}
\newcommand{\fp}{\mathfrak{p}}
\DeclareMathOperator{\Ass}{\operatorname{Ass}}
\newcommand\restr[2]{{
  \left.\kern-\nulldelimiterspace 
  #1 
  \vphantom{\big|} 
  \right|_{#2} 
  }}
\newtheorem{theorem}{Theorem}
\newtheorem{prop}[theorem]{Proposition}
\newtheorem{Lemma}[theorem]{Lemma}
\newtheorem{cor}[theorem]{Corollary}
\theoremstyle{definition}
\newtheorem{remark}[theorem]{Remark}
\newtheorem{example}[theorem]{Example}
\newtheorem{defi}[theorem]{Definition}
\newtheorem*{theoremA}{Theorem A}
\newtheorem*{theoremB}{Theorem B}
\newtheorem*{theoremC}{Theorem C}
\newtheorem*{theoremD}{Theorem D}
\crefname{theoremA}{Theorem A}{Theorems A}
\Crefname{theoremA}{Theorem A}{Theorems A}
\crefname{theoremB}{Theorem B}{Theorems B}
\Crefname{theoremB}{Theorem B}{Theorems B}
\crefname{theoremC}{Theorem C}{Theorems C}
\Crefname{theoremC}{Theorem C}{Theorems C}
\crefname{theoremD}{Theorem D}{Theorems D}
\Crefname{theoremC}{Theorem D}{Theorems D}
\title{Bourbaki degree of pairs of projective surfaces}
\author[1,2]{Felipe Monteiro}
\affil[1]{IMECC, University of Campinas (UNICAMP), Campinas, SP, CEP 13083-872, Brazil.}
\affil[2]{Université Bourgogne Europe, CNRS, IMB UMR 5584, F-21000 Dijon, France}
\begin{document}

\maketitle

\begin{abstract}

The present work focuses on studying the logarithmic tangent sheaf associated with sequences of two homogeneous polynomials in four variables. We introduce two positive discrete invariants: the invariant $m$ and the Bourbaki degree of a sequence, inspired by the framework of the Bourbaki degree recently developed for projective plane curves by Jardim-Nejad-Simis. The invariant $m$ plays the role of the Tjurina number of plane projective curves and is bounded by a quadratic relation of the degrees. We establish results concerning the interplay of minimal degree for syzygies of the Jacobian matrix and the introduced discrete invariants. Our approach uses tools from foliation theory, taking advantage of the fact that the logarithmic sheaf is, up to a twist, the tangent sheaf of a codimension-one foliation in $\mathbb{P}^3$. We provide examples and classification results for pencils of cubics and for pairs of a quadric and a cubic. In particular, one of the nearly-free examples induces an unstable, non-split tangent sheaf for a codimension-one foliation of degree $3$, answering, in the negative, a conjecture of Calvo-Andrade, Correa and Jardim from $2018$.
\end{abstract}

\tableofcontents


\section{Introduction}

Let $\kappa$ be an algebraically closed field of characteristic zero and let $R = \kappa[x_0, \ldots, x_n]$ be the polynomial ring in $n+1\geq 3$. For an algebraically independent sequence $\sigma = (f_1, \ldots, f_k)$ of homogeneous polynomials in $R$ with degrees $d_f + 1, \ldots, d_k + 1$, respectively, with $d_f \leq \ldots \leq d_k$ one can consider the Jacobian matrix as a map of locally free sheaves on $\mathbb{P}^n \doteq \Proj R$
$$
\nabla \sigma = \begin{pmatrix}
\nabla f_1\\
\nabla f_2\\
\vdots\\
\nabla f_k
\end{pmatrix}: \mathcal{O}_{\mathbb{P}^n}^{\oplus (n+1)} \rightarrow \bigoplus_{i=1}^k \mathcal{O}_{\mathbb{P}^n}(d_i).
$$
The kernel of $\nabla \sigma$, a reflexive sheaf of rank $n+1-k$ on $\mathbb{P}^n$,
is called the \emph{logarithmic tangent sheaf} associated with the sequence $\sigma$. This definition is in analogy (see \cite{faenzi:hal-03271244}) with the case of divisors in $\mathbb{P}^n$, namely for $k = 1$. A sequence $\sigma$ is said to be \emph{free} whenever the sheaf $\calt_\sigma$ splits as a direct sum of line bundles on $\mathbb{P}^n$. In the case of divisors, one has a short exact sequence of the form:
$$
0 \rightarrow \calt_f \rightarrow \mathcal{O}_{\mathbb{P}^n}^{\oplus (n+1)} \rightarrow \mathcal{I}_{J_f}(d) \rightarrow 0,
$$
where $J_f = ( \partial_0 f, \ldots, \partial_n f ) \subseteq R$ is the Jacobian ideal of the homogeneous polynomial $f$. The sheaf $\calt_f$ is the sheaf of $\mathcal{O}_{\mathbb{P}^n}$-modules associated with the graded $R$-module $\Syz(J_f)$ of \emph{Jacobian syzygies} of $f$. For a sequence $\sigma$, the sheaf $\calt_\sigma$ is associated with the graded $R$-module of syzygies of the Jacobian matrix $\nabla \sigma$.

We denote by $e = \indeg(\calt_\sigma)$ the minimum degree for a nonzero syzygy of the matrix $\nabla \sigma$, called the \emph{initial degree}. For $k = n-1$, a choice of global section of minimum degree $\nu \in H^0(\calt_\sigma(e))$ yields a short exact sequence
$$
0 \rightarrow \mathcal{O}_{\mathbb{P}^n}(-e) \xrightarrow{\nu} \calt_\sigma \rightarrow \mathcal{I}_{B_\nu}(e-d) \rightarrow 0
$$
where $B = B_\nu \subset \mathbb{P}^n$ is a pure codimension two scheme which is generically locally a complete intersection. The scheme $B_\nu$ depends on the choice of the syzygy $\nu$, but its Hilbert polynomial is independent of such a choice. In particular, we may consider the degree $\deg(B_\nu) = \Bour(\sigma)$, which we call \emph{Bourbaki degree} of the sequence $\sigma$, in analogy with the case studied for $k = 1$ and $n = 2$ in \cite{jardim2023bourbakidegreeplaneprojective}. By construction, $\Bour(\sigma) = 0$ if and only if $\sigma$ is a free sequence. The goal of this work is to study the concept above for $n = 3$ and $k = 2$.

For $k = 1$, the singular scheme $\Sigma_f$ of $V(f)$, defined by the Jacobian ideal, plays an important role. If $s = \dim \Sigma_f$, the Hilbert polynomial is given by
$$
H(\mathcal{O}_{J_f}(d), t)=H(R/J_f, t) = \frac{\deg(\Sigma_f)}{s!}t^s+O(t^{s-1}),
$$
and, by definition, the leading coefficient $\deg(\Sigma_f)$ is the \emph{degree} of the singular scheme. Moreover, assuming the hypersurface $V(f)$ is reduced, we obtain $s \leq n-2$. For $n = 2$, this means singularities are isolated, and $\deg(\Sigma_f)$ coincides with the Tjurina number of the projective plane curve $V(f) \subset \mathbb{P}^2$. The formula of the Bourbaki degree in this case is (\cite[Theorem 2.1]{jardim2023bourbakidegreeplaneprojective}):
$$
\Bour(f) = e(e-d) + d^2 - \deg(\Sigma_f).
$$

We denote by $\mathcal{Q}_\sigma = \coker(\nabla \sigma)$ the cokernel of the Jacobian matrix and call the scheme $\Xi_\sigma \doteq V( \bigwedge^k \nabla \sigma )$ the \emph{Jacobian scheme} of $\sigma$, given by the zero locus of the $(k \times k)$-minors of the matrix. The set-theoretical support of $\mathcal{Q}_\sigma$ is the reduced locus $(\Xi_\sigma)_{\text{red}}$, although the scheme structure of both may differ. When $\dim(\mathcal{Q}_\sigma) \leq n-2$, we call $\sigma$ a \emph{normal sequence}. 

Hereafter, we assume $n=3$ and $k = 2$, that is, sequences $\sigma = (f,g)$ of homogeneous polynomials $f,g \in R \doteq \kappa[x_0, \ldots, x_3]$ with degrees $\deg(f) = d_f+1, \deg(g) = d_g+1$, and we set $d \doteq d_f + d_g$. We also assume sequences $\sigma$ are \emph{normal}, as defined above, so $c_1(\mathcal{Q}_\sigma) = 0$ and $c_1(\calt_{\sigma}) = -d$. Then $\dim(\mathcal{Q}_\sigma) \leq 1$ and the Hilbert polynomial is of the form
$$
H(\mathcal{Q}_\sigma, t) = m(\sigma) t + O(1),
$$
where $m(\sigma) \doteq \deg(\mathcal{Q}_\sigma) \geq 0$ and $m(\sigma) = 0$ if and only if $\Xi_\sigma$ is zero-dimensional. From the associativity formula (\cite[Theorem $14.7$]{matsumura1989commutative}) applied to the graded homogeneous module $Q$ associated with $\mathcal{Q}_\sigma$, we obtain the expression  
$$
m(\sigma) = \sum_{\fp \in \Ann(\mathcal{Q}_\sigma), \dim(R/\fp) = 2} \length_{R_\fp} Q_\fp \cdot \deg(R/\fp).
$$

Using the fact that $\calt_\sigma$ is a rank two reflexive sheaf, we reproduce analogous considerations as in \cite{jardim2023bourbakidegreeplaneprojective}. We show the following formula for the \emph{Bourbaki degree} of $\sigma$:

\textbf{Proposition:} \textit{Let $\sigma = (f,g)$ be a normal sequence of homogeneous polynomials in $R$, with degrees $d_f + 1, d_g + 1$ respectively, and set $d = d_f + d_g$. If $e = \indeg(\calt_\sigma)$, then the Bourbaki degree of $\sigma$ is given by}
$$
\Bour(\sigma) = e(e-d) + d_f^2 + d_g^2 + d_f d_g - m(\sigma).
$$

A first result of the paper is some bounds for the discrete quantities defined above.

\textbf{Theorem A}(\ref{thmA}) \textit{ Let $\sigma = (f,g)$ be a normal sequence of homogeneous polynomials in $R$, with degrees $d_f + 1, d_g + 1$ respectively, and set $d = d_f + d_g$. Then:
\begin{itemize}
    \item[(a)] $\indeg(\calt_\sigma) \leq d$; 
    \item[(b)] $m(\sigma) \leq d_f^2 + d_g^2 + d_f d_g$;
    \item[(c)] The following are equivalent:
    \begin{itemize}
        \item[(1)] $m(\sigma) = d_f^2 + d_g^2 + d_f d_g$;
        \item[(2)] $\indeg(\calt_\sigma) = 0$;
        \item[(3)] $\calt_\sigma \simeq \mathcal{O}_{\mathbb{P}^3} \oplus \mathcal{O}_{\mathbb{P}^3}(-d)$;
        \item[(4)] $\sigma$ is compressible, that is, up to a linear change of coordinates of $\mathbb{P}^3$, both polynomials in $\sigma$ are independent of some variable $x_0, \ldots, x_3$;
    \end{itemize}
\end{itemize}}

We also show inequalities for $m(\sigma)$ and $\Bour(\sigma)$ in \Cref{subsec:bourbaki-degree-first} related to freeness and $\mu$-stability of the sheaf $\calt_\sigma$ (see \Cref{prop:Bour-lower-bound-stab}). In general, the Bourbaki degree is bounded by $\Bour(\sigma) \leq d_f^2 + d_g^2 + d_f d_g$, and the maximum is attained precisely whenever $m(\sigma) = 0$. In this case, the cokernel $\mathcal{Q}_\sigma$ is minimally resolved by the Buchsbaum--Rim complex. In particular, $\calt_\sigma(d) \simeq \coker(\nabla \sigma)^T$.

In \Cref{subsec:bourbaki-degree-proj-plane-curve}, we consider sequences $\sigma = (x_3, g)$, where $g \in \kappa[x_0, x_1, x_2]$ is a reduced homogeneous polynomial. We show that $\Bour(\sigma)$ coincides with the Bourbaki degree of the plane curve defined by $V(g) \subset \mathbb{P}^2 = V(x_3)$. In \Cref{sec:resolutions}, we relate free resolutions of $\calt_\sigma$ and of the ideal sheaf $\mathcal{I}_B$, where $B$ is the Bourbaki scheme obtained by the choice of a minimal degree syzygy. As a consequence, we introduce the notions of \emph{nearly free sequences} and $3$\emph{-syzygy sequences}, with the following chain of implications
$$
\sigma \text{ is nearly free} \Rightarrow \sigma \text{ is a }3\text{-syzygy sequence}  \Rightarrow \gpdim(\calt_\sigma) = 1,
$$
where $\gpdim(\calt_\sigma)$ is the minimal length of a free resolution of the $R$-module $H^0_*(\calt_\sigma) = \bigoplus_{l \in \mathbb{Z}} H^0(\calt_\sigma(l))$. The converses do not hold, see \Cref{ex:pcubics-pog-not-nf} and \Cref{ex:pencilcubics-Bour4-m5-notpog}.

In \Cref{sec:lower-initial-indeg}, we explore the structure of codimension-one foliation associated with $\sigma$, presented in \cite[Section 9]{faenzi:hal-03271244}, to obtain characterizations of low initial degrees $\indeg(\calt_\sigma) \in \{1, 2\}$. For plane projective curves, $e = 1$ implies that $\Bour(f) \in \{0,1\}$ (see, for example, \cite[Corollary 2.11]{jardim2023bourbakidegreeplaneprojective}). The main theorem of the section is:

\textbf{Theorem B.}(\ref{thmB}) \textit{Let $\sigma = (f,g)$ be a normal sequence of homogeneous polynomials with degrees $d_f + 1, d_g + 1$, respectively. Then:
\begin{itemize}
    \item[(a)] If $\indeg(\calt_\sigma) = 1$, then $\Bour(\sigma) \in \{0,  1, 2\}$;
    \item[(b)] If $\indeg(\calt_\sigma) = 2$, then $\Bour(\sigma) \leq 5$.
\end{itemize}}

We note that the bounds above are independent of the degrees $d_f, d_g$. In \Cref{sec:special-families}, we study on two particular families of normal sequences: pencils of cubics $(d_f = d_g = 2)$ and sequences with $d_f = 1, d_g = 2$, defining a degree $6$ curve inside a quadric surface in $\mathbb{P}^3$. To do so, we use classical results on bounds for Chern classes of reflexive sheaves in $\mathbb{P}^3$. We classify all free and nearly-free cases in these classes in terms of their discrete invariants and establish some stability results, which are summarized at the tables below (for precise statements, see Theorem~\ref{thmC} and Theorem~\ref{thmD}). 

Finally, we provide a counter-example for a conjecture posed by Calvo-Andrade, Correa and Jardim, in \cite{CalvoAndrade-Jardim-Correa-Fol}. They proposed that a tangent sheaf of a codimension-one foliation on $\mathbb{P}^3$ is either split or $\mu$-semistable. \Cref{ex:nearly-free-mixed-degrees-e1} gives a sequence $\sigma$ which induces a codimension-one foliation whose tangent sheaf is neither split nor $\mu$-semistable.

Along the text, we describe examples developed computationally with aid of Macaulay2 software (\cite{M2}) for the two families of sequences.

\newpage 

\begin{center}
Classification results for $d_f = d_g = 2$.
\begin{table}[h]
\begin{tabular}{|l|l|l|l|}
\hline
$m(\sigma)$ & $\Bour(\sigma)$ & Behavior               & minimal resolution for logarithmic tangent sheaf                                                                                                   \\ \hline
12          & 0               & free, compressible     & $\mathcal{O}_{\mathbb{P}^3} \oplus \mathcal{O}_{\mathbb{P}^3}(-4)$                                                                                 \\ \hline
9           & 0               & free, unstable         & $\mathcal{O}_{\mathbb{P}^3}(-1) \oplus \mathcal{O}_{\mathbb{P}^3}(-3)$                                                                             \\ \hline
8           & 0               & free, $\mu$-semistable & $\mathcal{O}_{\mathbb{P}^3}(-2)^{\oplus 2}$                                                                                                        \\ \hline
7           & 1               & unique nearly-free     & $0 \to \mathcal{O}_{\mathbb{P}^3}(-4) \to \mathcal{O}_{\mathbb{P}^3}(-2) \oplus \mathcal{O}_{\mathbb{P}^3}(-3)^{\oplus 2} \to \calt_\sigma \to 0$  \\ \hline
$\leq 6$    & $\geq 2$        & $\mu$-semistable       &                                                                                                                                                    \\ \hline
$\leq 2$    & $\geq 6$        & $\mu$-stable           &                                                                                                                                                    \\ \hline
0           & 12              & $\mu$-stable           & $0 \to \mathcal{O}_{\mathbb{P}^3}(-6)^{\oplus 2} \xrightarrow{(\nabla \sigma)^T} \mathcal{O}_{\mathbb{P}^3}(-4)^{\oplus 4} \to \calt_\sigma \to 0$ \\ \hline
\end{tabular}
\end{table}
\end{center}
\begin{center}
Classification results for $d_f = 1$, $d_g = 2$.
\begin{table}[h]
\begin{tabular}{|l|l|l|l|}
\hline
$m(\sigma)$ & $\Bour(\sigma)$ & Behavior                  & minimal resolution for logarithmic tangent sheaf                                                                                                                             \\ \hline
7           & 0               & free, compressible        & $\mathcal{O}_{\mathbb{P}^3} \oplus \mathcal{O}_{\mathbb{P}^3}(-3)$                                                                                                           \\ \hline
5           & 0               & free, unstable            & $\mathcal{O}_{\mathbb{P}^3}(-1) \oplus \mathcal{O}_{\mathbb{P}^3}(-2)$                                                                                                       \\ \hline
4           & 1               & nearly-free, unstable     & $0 \to \mathcal{O}_{\mathbb{P}^3}(-3) \to \mathcal{O}_{\mathbb{P}^3}(-2)^{\oplus 3} \to \calt_\sigma \to 0$                                                                  \\ \hline
4           & 1               & nearly-free, $\mu$-stable & $0 \to \mathcal{O}_{\mathbb{P}^3}(-4) \to \mathcal{O}_{\mathbb{P}^3}(-1) \oplus \mathcal{O}_{\mathbb{P}^3}(-3)^{\oplus 2} \to \calt_\sigma \to 0$                            \\ \hline
$< 3$       & $\geq 3$        & $\mu$-stable              &                                                                                                                                                                              \\ \hline
0           & 7               & $\mu$-stable              & $0 \to \mathcal{O}_{\mathbb{P}^3}(-4)\oplus \mathcal{O}_{\mathbb{P}^3}(-5) \xrightarrow{(\nabla \sigma)^T} \mathcal{O}_{\mathbb{P}^3}(-3)^{\oplus 4} \to \calt_\sigma \to 0$ \\ \hline
\end{tabular}
\end{table}
\end{center}

\subsection*{Acknowledgments}
FM is financed by the São Paulo Research Foundation (FAPESP), Brasil. Process Number \#2021/10550-4. FM is currently a PhD. student under the advice of Marcos Jardim and Daniele Faenzi, with partial funding by the Bridges "Brazil-France interplays in Gauge Theory, extremal structures and stability" projects ANR-21-CE40-0017 and ANR-17-EURE-0002. We thank Marcos Jardim, Daniele Faenzi, Alan Muniz, Victor Cordeiro and Abbas Nejad for fruitful discussions and suggestions regarding this work.


\section{The Bourbaki degree of pairs of projective surfaces}\label[section]{sec:Bourbaki-degree}

In this section, we develop the concept of the \emph{Bourbaki degree} of pairs of projective surfaces on $\mathbb{P}^3$, determined by normal sequences $\sigma = (f,g)$ of homogeneous polynomials. We start with definitions and first results (\Cref{subsec:bourbaki-degree-first}), followed by a reduction to the case of a projective plane curve (\Cref{subsec:bourbaki-degree-proj-plane-curve}) and finish with results relating the geometry of the Bourbaki scheme and the associated logarithmic sheaf, using free resolutions (\Cref{sec:resolutions}).

\subsection{Framework and first results}\label[subsection]{subsec:bourbaki-degree-first}

By a sequence $\sigma = (f,g)$, unless otherwise stated, we mean an algebraically independent sequence of two homogeneous polynomials in $R \doteq \kappa[x_0, \ldots, x_3]$ with degrees $\deg(f) = d_f + 1$, $\deg(g) = d_g + 1$. By \emph{curve} we mean a locally Cohen-Macaulay closed subscheme of $\mathbb{P}^3$ of pure dimension one. 
     
Each sequence $\sigma = (f,g)$ induces a morphism of sheaves on $\mathbb{P}^3$ by the Jacobian matrix:
$$
\nabla \sigma : \mathcal{O}_{\mathbb{P}^3}^{\oplus 4} \rightarrow \mathcal{O}_{\mathbb{P}^3}(d_f) \oplus \mathcal{O}_{\mathbb{P}^3}(d_g),
$$
and we denote the kernel by $\calt_\sigma$, the image by $\mathcal{M}_\sigma$ and the cokernel by $\mathcal{Q}_\sigma$. A sequence $\sigma$ will be called \emph{free} if $\calt_\sigma$ splits as a direct sum of line bundles. We assume throughout that the sequences $\sigma$ satisfy the condition $\codim(\mathcal{Q}_\sigma) \geq 2$, and in this case we call $\sigma$ a \emph{normal sequence}. With this assumption, we obtain $\rank(\mathcal{Q}_\sigma) = 0$ and $c_1(\mathcal{Q}_\sigma) = 0$. The following lemma relates the Hilbert polynomial of $\mathcal{Q}_\sigma$ and its Chern characters.

\begin{Lemma}\label[Lemma]{lem:int-theory-Q}
Let $\mathcal{Q}$ be a coherent sheaf on $\mathbb{P}^3$ with $\rank(\mathcal{Q}) = 0$ and $c_1(\mathcal{Q}) = 0$. Then the Hilbert polynomial of $\mathcal{Q}$ is given by
$$
\mathcal{X}(\mathcal{Q}(t)) = \ch_2(\mathcal{Q}) t + \ch_3(\mathcal{Q}) + 2\ch_2(\mathcal{Q}).
$$
\end{Lemma}

\begin{proof}
This is a direct application of the Hirzebruch-Riemann-Roch Theorem.
\end{proof}

We denote by $m(\sigma) \doteq \ch_2(\mathcal{Q}_\sigma)$. Since $m(\sigma)$ is the leading coefficient of a Hilbert polynomial, $m(\sigma) \geq 0$, and it is zero if and only if $\mathcal{Q}_\sigma$ is a zero-dimensional sheaf.

The \emph{Jacobian scheme} of $\nabla \sigma$ is defined as
$$
\Xi_\sigma \doteq V \left(\bigwedge^2 \nabla \sigma \right) = V(\Fitt_0(\nabla \sigma))
$$
the zero-locus of the $2 \times 2$ minors of $\nabla \sigma$ or, in other words, the $0-$th Fitting scheme of $\nabla \sigma$. From the general theory of Fitting ideals, the annihilator ideal $\Ann \mathcal{Q}_\sigma$ of $\mathcal{Q}_\sigma$ contains the Fitting ideal $\Fitt_0(\nabla \sigma)$, and they have the same reduced support, namely
$$
V(\sqrt{\Ann(\mathcal{Q}_\sigma}) = \supp(\mathcal{Q}_\sigma)_{\text{red}} = V(\Fitt_0(\nabla \sigma))_{\text{red}} = V(\sqrt{\Fitt_0(\nabla \sigma)})
$$
as closed subsets of $\mathbb{P}^3$. In particular, the irreducible components of the schematic support $\supp(\mathcal{Q}_\sigma)$ coincide with the irreducible components of $\Xi_\sigma$. The respective schematic structures may be different, as we explore in the next example. Denote by $(\Xi_\sigma)_1$ the one-dimensional part of the scheme $\Xi_\sigma$.

\begin{example}\label[example]{ex:schematic-difference}
Let $\sigma = (2x_1x_3 - x_1^2,3x_2x_3^2 - 3x_0x_1x_3 + x_1^3)$. The Jacobian matrix is of the form
$$
\begin{pmatrix}
0         & -2x_1+2x_3      & 0     & 2x_1\\
-3x_1x_3   & 3x_1^2-3x_0x_3  & 3x_3^2 & -3x_0x_1+6x_2x_3\\
\end{pmatrix}.
$$
The annihilator ideal of $\coker(\nabla \sigma)$ and the $0$-th Fitting ideal are different, namely given by:
$$
\begin{cases}
\Ann (\mathcal{Q}_\sigma) &= (x_3^2, x_1x_3, x_0x_1^2-x_1^3) = (x_1, x_3)^2 \cap (x_3, x_0-x_1)\\
\Fitt_0(\mathcal{Q}_\sigma) &= (x_3^2, x_1x_3^2, x_1^2x_3, x_0x_1^2-x_1^3-2x_1x_2x_3+2x_2x_3^2),
\end{cases}
$$
so both schemes $\supp(\mathcal{Q}_\sigma)$ and $\Xi_\sigma$ are non-reduced, with degrees $4$ and $6$, respectively. The matrix below
$$
\begin{pmatrix}
x_3 & x_0x_1 - x_1^2 - 2x_2x_3\\
0   & -x_1x_3\\
x_1 & -2x_2x_3 \\
0   & -x_1x_3+x_3^2
\end{pmatrix}
$$
gives two linearly independent syzygies for the matrix $\nabla \sigma$, and thus $\calt_\sigma \simeq \mathcal{O}_{\mathbb{P}^3}(-1) \oplus \mathcal{O}_{\mathbb{P}^3}(-2)$ and $\sigma$ is free. From this and the additivity of the Hilbert polynomial on short exact sequences, one may compute $m(\sigma) = 5$.
\end{example}

The following proposition gives a sufficient condition to forbid the anomaly above to occur.

\begin{prop}\label[prop]{prop:coincidence-m-Xi}
Let $\sigma = (f,g)$ be a normal sequence. If $\codim(\Fitt_1(\nabla \sigma)) \geq 3$, then
$$
m(\sigma) = \deg(\Xi_\sigma) = \deg(\supp \mathcal{Q}_\sigma).
$$
\end{prop}

\begin{proof}
Let $Q_\sigma$ denote the graded $R$-module associated to $\mathcal{Q}_\sigma$. All three invariants above may be described using the associativity formula, as follows:
\begin{align*}
\deg(\Xi) &= \sum_{\fp \in \Ass(\Fitt_0(\nabla \sigma)),\; \dim(R/\fp) = \dim(\Fitt_0(\nabla \sigma)_\fp)} \length_{R_\fp}\bigl((R/\Fitt_0(\nabla \sigma))_\fp\bigr) \cdot \deg(R/\fp) \\
\deg(\supp(\mathcal{Q}_\sigma)) &= \sum_{\fp \in \Ass(\Fitt_0(\nabla \sigma)),\; \dim(R/\fp) = \dim((Q_\sigma)_\fp)} \length_{R_\fp}\bigl((R/\Ann Q_\sigma)_\fp\bigr) \cdot \deg(R/\fp) \\
m(\sigma) &= \sum_{\fp \in \Ass(\Fitt_0(\nabla \sigma)),\; \dim(R/\fp) = \dim((Q_\sigma)_\fp)} \length_{R_\fp}\bigl((Q_\sigma)_\fp\bigr) \cdot \deg(R/\fp).
\end{align*}
The sets of primes involved in each sum are the same, so we only need to check that the lengths coincide over minimal prime ideals $\fp$ of height two. Since $\codim(\Fitt_1(\nabla \sigma)) \geq 3$, the rank of the localization $(\nabla \sigma)_\fp$ is one, and we may change coordinates locally to write
$$
A_\fp \doteq (\nabla \sigma)_\fp = \begin{pmatrix}
1 & 0 & 0 & 0\\
0 & a & b & c
\end{pmatrix}.
$$
In these coordinates, we obtain $\Fitt_0(A_\fp) = (a,b,c)$ from the $2 \times 2$-minors, and moreover $(Q_\sigma)_\fp \simeq R_\fp / (a,b,c)$. We claim that $\Ann(Q_\sigma)_\fp = (a,b,c)$. Since $(a,b,c)=\Fitt_0(\nabla \sigma) \subset \Ann(Q_\sigma)$, we show the other inclusion. Let $r \in R_\fp$ such that $r (Q_\sigma)_{\fp} = 0$. Writing $(Q_\sigma)_{\fp} = \coker(A_\fp)$, we have
$$
r \cdot \begin{pmatrix}
v_1 \\
v_2
\end{pmatrix} = \begin{pmatrix}
r v_1\\
r v_2
\end{pmatrix} \in \text{im }A_\fp
$$
for $(v_1, v_2) \in R_\fp^{2}$. From the second equation, one obtains $
r \cdot v_2 = a u_1 + b u_2 + c u_3$, for some $u_i \in A_p$. Taking $v_2 = 1$ yields $r \in (a,b,c)$. This shows that
$$
\length_{R_\fp}(R_\fp / \Fitt_0(\nabla \sigma)_\fp) = \length_{R_\fp}(R_\fp / \Ann(Q_\sigma)_\fp) = \length_{R_\fp}((Q_\sigma)_\fp),
$$
and since $(Q_\sigma)_\fp \simeq R_\fp/(a,b,c)$, the claim follows.
\end{proof}

Given a sequence $\sigma = (f,g)$ and a saturated syzygy of the Jacobian matrix $\nu$ of degree $l \in \mathbb{Z}$, we obtain a short exact sequence:
$$
0 \rightarrow \mathcal{O}_{\mathbb{P}^3}(-l) \xrightarrow{\nu} \calt_\sigma \rightarrow \cali_{B_{\nu}}(p) \rightarrow 0, 
$$
where $B_\nu \subset \mathbb{P}^{3}$ is the curve associated with $\nu$. Using the Hilbert polynomial, one can write formulas (see \cite[Proposition $4.1$]{Hartshorne1980}):
\begin{align*}
\deg(B_\nu) &=c_2(\calt_{\sigma}(l))\\  c_3(\calt_\sigma) &= 2p_a(B) - 2 + \deg(B)(4+d-2l).
\end{align*}
Since $\calt_\sigma \hookrightarrow \calo_{\mathbb{P}^3}^{\oplus 4}$ and the latter is a $\mu$-semistable sheaf, $l \geq 0$. The following proposition describes a formula for the degree $\deg(B_\nu)$ in terms of the discrete invariants $l$, $d_f$, $d_g$ and $m(\sigma)$.

\begin{prop}\label[prop]{prop:Bour-degree-formula}
Let $\sigma = (f,g)$ be a normal sequence of homogeneous polynomials in $R$ with degrees $\deg(f) = d_f + 1$, $\deg(g) = d_g + 1$. For a saturated syzygy $\nu \in H^0(\calt_\sigma(l))$ of degree $l \geq 0$, let $B_\nu \subset \mathbb{P}^3$ be the associated curve. Then, we have the following equation:
$$
\deg(B_\nu) = l^2 - l(d_f + d_g) + m_0 - m(\sigma),
$$
where $m_0 \doteq d_f^2 + d_g^2 + d_f d_g$.
\end{prop}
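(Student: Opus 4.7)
The plan is to compute $c_2(\calt_\sigma)$ from first principles in two ways: first as a Chern-class invariant of $\calt_\sigma$ determined by $d_f$, $d_g$ and $m(\sigma)$, and second as read off from the Serre-type short exact sequence involving the Bourbaki curve $B_\nu$. Comparing the two expressions will yield the claimed formula.

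First, I would assemble the two tautological short exact sequences attached to $\nabla\sigma$:
\begin{equation*}
0 \to \calt_\sigma \to \mathcal{O}_{\mathbb{P}^3}^{\oplus 4} \to \mathcal{M}_\sigma \to 0, \qquad 0 \to \mathcal{M}_\sigma \to \mathcal{O}_{\mathbb{P}^3}(d_f) \oplus \mathcal{O}_{\mathbb{P}^3}(d_g) \to \mathcal{Q}_\sigma \to 0.
\end{equation*}
Since $\sigma$ is normal, $\mathcal{Q}_\sigma$ has rank zero and $c_1(\mathcal{Q}_\sigma)=0$, and by the definition of $m(\sigma)$ together with \Cref{lem:int-theory-Q}, its Chern character begins with $\ch_2(\mathcal{Q}_\sigma) = m(\sigma) h^2$. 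Additivity of the Chern character then gives $\ch(\calt_\sigma) = 4 - \ch(\mathcal{O}(d_f)) - \ch(\mathcal{O}(d_g)) + \ch(\mathcal{Q}_\sigma)$, from which I extract $c_1(\calt_\sigma) = -(d_f + d_g)$ and, via $c_2 = \tfrac{c_1^2}{2} - \ch_2$, the identity
\begin{equation*}
c_2(\calt_\sigma) = d_f^2 + d_g^2 + d_f d_g - m(\sigma) = m_0 - m(\sigma).
\end{equation*}

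Next, I turn to the syzygy sequence $0 \to \mathcal{O}_{\mathbb{P}^3}(-e) \xrightarrow{\nu} \calt_\sigma \to \cali_{B_\nu}(p) \to 0$. Matching first Chern classes forces $p = e - (d_f + d_g) = e - d$. Because $B_\nu$ is purely one-dimensional, a direct Riemann--Roch or Chern-character computation shows $c_2(\cali_{B_\nu}(p)) = \deg(B_\nu)$, independently of the twist $p$. The Whitney product formula applied to the above sequence then yields
\begin{equation*}
c_2(\calt_\sigma) = c_1(\mathcal{O}(-e)) \cdot c_1(\cali_{B_\nu}(p)) + c_2(\cali_{B_\nu}(p)) = -e(e-d) + \deg(B_\nu).
\end{equation*}

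Equating the two expressions for $c_2(\calt_\sigma)$ and solving for $\deg(B_\nu)$ gives the desired formula $\deg(B_\nu) = e^2 - e(d_f + d_g) + m_0 - m(\sigma)$. I do not expect a serious obstacle here: the only subtleties are (i) invoking normality to guarantee $c_1(\mathcal{Q}_\sigma)=0$, so that $\mathcal{Q}_\sigma$ contributes only starting from $\ch_2$, and (ii) checking that $c_2$ of the twist $\cali_{B_\nu}(e-d)$ still equals $\deg(B_\nu)$, which is a one-line verification from $\ch(\cali_{B_\nu}) \cdot \ch(\mathcal{O}(p))$ since the $c_1$ of $\cali_{B_\nu}$ vanishes.
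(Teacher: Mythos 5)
Your proposal is correct and follows essentially the same route as the paper: both arguments compute $c_1(\calt_\sigma)=-(d_f+d_g)$ and $c_2(\calt_\sigma)=m_0-m(\sigma)$ by additivity of the Chern character along the two Jacobian exact sequences, and then extract $\deg(B_\nu)$ from the Serre-type sequence for $\nu$. The only cosmetic difference is that you apply the Whitney formula to the Bourbaki sequence directly (after checking $c_2(\cali_{B_\nu}(p))=\deg(B_\nu)$), whereas the paper uses the equivalent identity $\deg(B_\nu)=c_2(\calt_\sigma(e))=c_2(\calt_\sigma)+e\,c_1(\calt_\sigma)+e^2$.
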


\begin{proof}
From the hypothesis $c_1(\calq_\sigma) = 0$, we conclude $c_1(\calt_\sigma) = -(d_f + d_g)$ and $\ch_2(\calq_\sigma) = m(\sigma)$. Moreover, from additivity of $\ch_2$ on short exact sequences:
\begin{align*}
\ch_2(\calt_\sigma)
&= - \ch_2(\calo_{\mathbb{P}^3}(d_f) \oplus \calo_{\mathbb{P}^3}(d_g)) + \ch_2(\calq_\sigma)\\
&= - \frac{d_f^2 + d_g^2}{2} + m(\sigma).
\end{align*}
We can relate the Chern character and the Chern classes by the formula
\begin{align*}
m(\sigma) &= \ch_2(\calt_\sigma) + \frac{d_f^2 + d_g^2}{2}\\ 
&= \frac{c_1^2(\calt_\sigma) - 2c_2(\calt_\sigma)}{2} + \frac{d_f^2 + d_g^2}{2}\\
&= \frac{(d_f + d_g)^2}{2} + \frac{d_f^2 + d_g^2}{2} - c_2(\calt_\sigma)\\
&= d_f^2 + d_g^2 + d_f d_g - c_2(\calt_\sigma),
\end{align*}
so that
\begin{align}\label{eq-1}
c_2(\calt_\sigma) = d_f^2 + d_g^2 + d_f d_g - m(\sigma).
\end{align}
On the other hand, since $c_2(\calt_\sigma(l)) = \deg(B_\nu)$ and $\calt_\sigma$ is reflexive of rank $2$, we have
$$
\deg(B_\nu) = c_2(\calt_\sigma(l)) = c_2(\calt_\sigma) + l \cdot c_1(\calt_\sigma) + l^2.
$$
From this, together with \ref{eq-1}, we obtain
$$
\deg(B_\nu) = l^2 - l(d_f + d_g) + (d_f^2 + d_g^2 + d_f d_g) -m(\sigma).
$$
\end{proof}

We define the Bourbaki degree of a sequence $\sigma$, inspired by the construction in \cite[Definition 2.4]{jardim2023bourbakidegreeplaneprojective}. 

\begin{defi}\label[defi]{def:Bour}
Let $\sigma = (f,g)$ be a normal sequence, $e = \indeg(\calt_\sigma)$ and let $d \doteq d_f + d_g$. The \emph{Bourbaki degree} of $\sigma$ is defined by:
$$
\Bour(\sigma) \doteq \deg(B_\nu) = e(e-d) + m_0 - m(\sigma),
$$
for some non-trivial syzygy $\nu \in H^0(\calt_\sigma(e))$. We note that every non-trivial syzygy of minimal degree is saturated, and $\Bour(\sigma)$ is independent of the choice of $\nu$.
\end{defi}

\begin{remark}\label[remark]{rmk:Bour-free-seq}
It follows from construction that the sequence $\sigma$ is free if and only if $\Bour(\sigma) = 0$, since every short exact sequence of the form
$$
0 \to \mathcal{O}_{\mathbb{P}^3}(-e) \to \calt_\sigma \to \mathcal{O}_{\mathbb{P}^3}(e-d) \to 0
$$
splits, as the extension group $
\ext^1(\mathcal{O}_{\mathbb{P}^3}(e-d), \mathcal{O}_{\mathbb{P}^3}(-e)) \simeq H^1(\mathcal{O}_{\mathbb{P}^3}(d-2e))$ vanishes.
\end{remark}

Whenever, up to a change of coordinates, the elements of a sequence $\sigma = (f,g)$ do not depend on some variable in $R$, $\sigma$ is called \emph{compressible}, see \cite{faenzi:hal-03271244}. The following lemma is a consequence of their results in our context:

\begin{Lemma}\label[Lemma]{lem:compressibility}[\cite[Lemmas 2.7 and 2.8]{faenzi:hal-03271244}]
Let $\sigma = (f,g)$ be a normal sequence in $\mathbb{P}^3$. Then:
\begin{itemize}
    \item[(a)] A sequence $\sigma$ is compressible if and only if $h^0(\calt_\sigma) \neq 0$;
    \item[(b)] If $\sigma$ is compressible, then $\calt_\sigma \simeq \mathcal{O}_{\mathbb{P}^3}\oplus \mathcal{O}_{\mathbb{P}^3}(-d)$.
\end{itemize}
\end{Lemma}

Now, we obtain some bounds for the quantities $\indeg(\calt_\sigma)$ and $m(\sigma)$ in terms of the degrees $d_f, d_g$.

\phantomsection\label{thm:first}
\begin{theoremA}\label[theoremA]{thmA}
\textit{ Let $\sigma = (f,g)$ be a normal sequence of homogeneous polynomials of the ring $\kappa[x_0, \ldots, x_3]$, with degrees $d_f + 1, d_g + 1$ respectively. Then:
\begin{itemize}
    \item[(a)] $\indeg(\calt_\sigma) \leq d_f + d_g$; 
    \item[(b)] $m(\sigma) \leq m_0$;
    \item[(c)] The following are equivalent:
    \begin{itemize}
        \item[(1)] $m(\sigma) = m_0$;
        \item[(2)] $\indeg(\calt_\sigma) = 0$;
        \item[(3)] $\sigma$ is compressible;
        \item[(4)] $\calt_\sigma \simeq \mathcal{O}_{\mathbb{P}^3} \oplus \mathcal{O}_{\mathbb{P}^3}(-d)$;
    \end{itemize}
\end{itemize}}
\end{theoremA}

\begin{proof}
To show $(a)$, we build explicit syzygies of the Jacobian matrix $\nabla \sigma$ of degrees $d_f + d_g$, and at least one of them is nonzero. Writing the Jacobian matrix by
$$
 \nabla \sigma = \begin{pmatrix}
\partial_0 f & \partial_1 f & \partial_2 f & \partial_3 f\\
\partial_0 g & \partial_1 g & \partial_2 g & \partial_3 g\\
\end{pmatrix},
$$
the following vectors
\begin{align*}
\nu_0 &= \begin{pmatrix}
0\\
\partial_2 f \partial_3 g - \partial_3 f \partial_2 g \\
-\partial_1 f \partial_3 g + \partial_3 f \partial_1 g\\
\partial_1 f \partial_2 g - \partial_2 f \partial_1 g
\end{pmatrix};
\nu_1 = \begin{pmatrix}
\partial_2 f \partial_3 g - \partial_3 f \partial_2 g\\
0 \\
-\partial_0 f \partial_3 g + \partial_3 f \partial_0 g\\
\partial_0 f \partial_2 g - \partial_2 f \partial_0 g
\end{pmatrix};\\
\nu_2 &= \begin{pmatrix}
\partial_1 f \partial_3 g - \partial_3 f \partial_1 g\\
- \partial_0 f \partial_3 g + \partial_3 f \partial_0 g \\
0\\
\partial_0 f \partial_1 g - \partial_1 f \partial_0 g
\end{pmatrix};
\nu_3 = \begin{pmatrix}
\partial_1 f \partial_2 g - \partial_2 f \partial_1 g\\
- \partial_0 f \partial_2 g + \partial_2 f \partial_0 g \\
\partial_0 f \partial_1 g - \partial_1 f \partial_0 g\\
0
\end{pmatrix}.
\end{align*}
are a syzygies of degree $d_f + d_g$. Since $\sigma$ is algebraically independent, there is at least one nonzero $2\times 2$ minor, and at least one of the syzygies $\nu_0, \ldots, \nu_3$ is nonzero. 

For $(b)$, using that $e \leq d$, we can use that
$$
0 \leq \Bour(\sigma) = e(e-d) + m_0 - m(\sigma),
$$
so that $m(\sigma) \leq m_0 + e(e-d)$, but $e(e-d) \leq 0$ and the claim follows. 

For $(c)$, the equivalence $(2) \iff (3)$ is the content of \Cref{lem:compressibility}, $(a)$. Moreover, $(3) \Rightarrow (4)$ is \Cref{lem:compressibility}, $(b)$. The implication $(4) \Rightarrow (1)$ can be obtained using the Bourbaki degree formula, since $\indeg(\calt_\sigma) = 0$ and $\Bour(\sigma) = 0$, as $\sigma$ is free.

To show $(1) \Rightarrow (2)$, let $e = \indeg(\calt_\sigma)$. From the Bourbaki formula we obtain $\Bour(\sigma) = e(e-d)$. Since $\Bour(\sigma) \geq 0$ and, from part (a), $e(e-d) \leq 0$, it follows that either $e=0$ or $e=d$. Both cases mean $\Bour(\sigma) = 0$ and the sequence is free, say $\calt_\sigma \simeq \mathcal{O}_{\mathbb{P}^3}(-a) \oplus \mathcal{O}_{\mathbb{P}^3}(-b)$, with $0\leq a \leq b$. If $e = d$, then $a = d > 0$, but with $c_1(\calt_\sigma) = -(a+b)= -d$ one gets a contradiction with $a \leq b$, and thus $e = 0$.
\end{proof}

\begin{remark}
The bound obtained in \ref{thmA}, $(b)$ for $m(\sigma)$ relates to a known bound in foliation theory. We will use that $\calt_\sigma(1)$ is the tangent sheaf of a codimension-one foliation in $\mathbb{P}^3$ of degree $d = d_f + d_g$ (see \cite[Section 9]{faenzi:hal-03271244}). Let $C$ be the one-dimensional part of the singular scheme of this foliation. From the formulas of discrete invariants in \cite[Theorem 3.1]{CalvoAndrade-Jardim-Correa-Fol}:
$$
c_2(\calt_\sigma(1)) = d^2 + 2 - \deg(C).
$$
Using the formula $c_2(\calt_\sigma) = d_f^2 + d_g^2 + d_f d_g - m(\sigma)$ and the equations
$$
c_2(\calt_\sigma) - d + 1 = c_2(\calt_\sigma(1)) = d^2 + 2 - \deg(C)
$$
we obtain $
m(\sigma) = \deg(C) - d - d_f d_g - 1$, so from the bound above we get
\begin{align*}
\deg(C) - d - d_f d_g - 1 = m(\sigma)&\leq d_f^2 +d_g^2+ d_f d_g,
\end{align*}
and therefore $\deg(C) \leq d^2 + d + 1$, a bound that can be found in more generality for foliations in \cite[Corollary 4.8]{holomorphicfoliations-MSoares-2005}.
\end{remark}

The following inequalities relate the discrete invariants with $\mu$-stability and freeness of the logarithmic sheaves.

\begin{prop}\label[prop]{prop:Bour-lower-bound-stab}
Let $\sigma = (f,g)$ be a normal sequence of homogeneous polynomials in $R$. Denote by $e = \indeg(\calt_\sigma)$ and $d = d_f + d_g$. Then:
\begin{itemize}
    \item[(a)] $
    \Bour(\sigma) \leq m_0$, and equality holds if any of the equivalent facts hold:
    \begin{itemize}
        \item[(1)] $m(\sigma) = 0$;
        \item[(2)] $\calt_\sigma$ admits a minimal resolution by the Buchsbaum--Rim complex:
        $$
        0 \to \mathcal{O}_{\mathbb{P}^3}(-d-d_f) \oplus \mathcal{O}_{\mathbb{P}^3}(-d-d_g) \xrightarrow{\varphi} \mathcal{O}_{\mathbb{P}^3}^{\oplus 4}(-d) \xrightarrow{\psi} \calt_\sigma \to 0,
        $$
        where
        \[\psi=\begin{pmatrix}
		0 & \Delta_{12} & \Delta_{13} & \Delta_{14} \\
		-\Delta_{12} & 0 & \Delta_{23} & \Delta_{24} \\
		-\Delta_{13} & -\Delta_{23} & 0 & \Delta_{34} \\
		-\Delta_{14} & -\Delta_{24} & -\Delta_{34} & 0
	\end{pmatrix},\quad 
		\varphi=
		\begin{pmatrix}
			\partial_{0} f & -\partial_0 g \\[2pt]
			- \partial_1 f & \partial_1 g\\[2pt]
			\partial_2 f & -\partial_2 g\\[2pt]
			-\partial_3 f & \partial_3 g
		\end{pmatrix},
		\]
        and each $\Delta_{ij}$ is the $(i,j)-$minor of the matrix $\nabla \sigma$. In particular, $e = d$ and the logarithmic tangent sheaf is identified with the cokernel of the transpose matrix $\calt_\sigma(d) \simeq \coker(\nabla \sigma)^T$.
    \end{itemize}
    \item[(b)] If $
    \Bour(\sigma) > (d_f - 1)(d_f + d_g) + d_g^2 + 1$, then $\calt_\sigma$ is $\mu$-stable.
    \item[(c)] If
    $$
    m(\sigma) < \frac{1}{2}\left( \frac{3d_f^2}{2} + \frac{3d_g^2}{2} + d_f d_g \right),
    $$
    then $\sigma$ is not free.
    \end{itemize}
\end{prop}

\begin{proof}
The claim $(a)$ follows from the formula $\Bour(\sigma) = e(e-d) + m_0 - m(\sigma)$, since $m(\sigma) \geq 0$ and $e(e-d) \leq 0$. This also shows equality occurs if and only if both $e(e-d)$ and $m(\sigma)$ are zero, therefore $e = d$ or $e = 0$. But $e = 0$ gives $\Bour(\sigma) = 0$, from compressibility, and $m_0 \neq 0$ implies $e = d$. To show the equivalences above, note that $m(\sigma) = 0$ if and only if the grade of the ideal of $(2 \times 2)-$minors is three, which occurs if and only if the Buchsbaum--Rim complex is a minimal free resolution for $\nabla \sigma$, when in particular $e = d$. 

To show $(b)$, note that
$$
\Bour(\sigma) \leq e(e-d) + m_0,
$$
and this is a function $H = H(e)$ which attains its minimum at $e = d/2$. The function $H$ is decreasing on $e \in \{1, \ldots, d/2\}$, so when $\Bour(\sigma) > H(1)=(d_f - 1)(d_f + d_g) + d_g^2 + 1$, then $e > d/2$, and thus $\calt_\sigma$ is $\mu$-stable.

To prove $(c)$, note that $H$ attains its minimum at $e = d/2$, so we obtain that
$$
\Bour(\sigma) = H(e) - m(\sigma) \geq H(d/2) - m(\sigma) = \frac{d^2}{4} - \frac{d^2}{2} + m_0 - m(\sigma) > 0,
$$
and therefore $\Bour(\sigma) \neq 0$, independently of the value of $e = \indeg(\calt_\sigma)$, and $\sigma$ is not free.
\end{proof}

\begin{remark}\label[remark]{rmk:regular-pencils}
Let $\sigma = (f,g)$ with $d_f = d_g = p$. It defines a pencil of projective surfaces:
$$
V_{\sigma} \doteq \{ V(z_0 f+z_1 g) \subset \mathbb{P}^3 : z = [z_0:z_1] \in \mathbb{P}^1 \},
$$
where each $V(z_0 f + z_1 g)$ is a \textit{member} of $V_\sigma$. The pencil is called \emph{regular} when the generic member is non-singular, otherwise it is called \emph{irregular}.

In \cite[Lemma 2.17]{faenzi:hal-03271244}, the authors describe the support of the sheaf $\mathcal{Q}_\sigma = \coker(\nabla \sigma)$ for pencils of surfaces as the union of singular loci:
$$
(\Xi_\sigma)_{\text{red}} = \bigcup_{[z_0:z_1] \in \mathbb{P}^1} \Sing(V(z_0 f + z_1 g)).
$$
Hence, whenever a normal sequence $\sigma=(f,g)$ defines a regular pencil whose members have at most isolated singularities, then $\dim(\Xi_\sigma) = 0$ and $m(\sigma) = 0$.
\end{remark}

\begin{example}\label[example]{ex:pencils-cubics-genericpencil}
Consider the sequence
$$
\sigma = (f,g) = ( x_3(x_0x_2 - x_1^2) - (x_0 - 2x_1)(3x_1 - x_0- 2x_2)(x_1 - 2x_2), x_3(x_0x_2 - x_1^2) - x_1^2(x_0-x_1))
$$
where $f$ is a normal singular cubic with an $A_1$-singularity at $[0:0:0:1]$ and $g$ is a normal singular cubic with singularity type $2A_1A_2$. Here, $m(\sigma) = 0$, $\Bour(\sigma) = 12$ and $c_3(\calt_\sigma) = 32$. Using \Cref{prop:Bour-lower-bound-stab}, $(a)$, it follows $\calt_\sigma$ has a minimal free resolution given by the Buchsbaum--Rim complex:
$$
0 \rightarrow \mathcal{O}_{\mathbb{P}^3}(-6)^{\oplus 2} \rightarrow \mathcal{O}_{\mathbb{P}^3}(-4)^{\oplus 4} \rightarrow \calt_\sigma \rightarrow 0.
$$
\end{example}

\subsection{Plane curves as pairs of surfaces}\label[subsection]{subsec:bourbaki-degree-proj-plane-curve}

Let $g \in \kappa[x_0, x_1, x_2]$ be a square-free homogeneous polynomial of degree $d$, with the associated curve $X = V(g) \subset \mathbb{P}^2 = V(x_3)$ which has at most isolated singularities, and let $\sigma = (x_3, g)$. We consider $S = V(g) \subset \mathbb{P}^3$, which is the cone over the curve $X$ and denote $Z_g \doteq \Sing(S)$ the associated singular scheme. The matrix $\nabla \sigma$ will be given by
$$
\nabla \sigma = \begin{pmatrix}
0 & 0 & 0 & 1\\
\partial_0 g & \partial_1 g & \partial_2 g & 0
\end{pmatrix}.
$$
Let us denote by $\nabla \overline{g} = (\partial_0 g, \partial_1 g, \partial_2 g)$ and
$$
\calt_{\overline{g}} \doteq \ker(\nabla \overline{g}) \hookrightarrow \mathcal{O}_{\mathbb{P}^3}^{\oplus 3} \xrightarrow{\nabla \overline{g}} \mathcal{O}_{\mathbb{P}^3}(d_g)
$$
the kernel of the multiplication. Using the block-form of the matrix $\nabla \sigma$, we form the following diagram with exact columns:
\[\begin{tikzcd}
	{\calt_{\overline{g}}} & {\mathcal{O}_{\mathbb{P}^3}^{\oplus 3}} & {\mathcal{O}_{\mathbb{P}^3}(d)} & {\mathcal{O}_{Z_g}(d)} \\
	{\calt_\sigma} & {\mathcal{O}_{\mathbb{P}^3}^{\oplus 4}} & {\mathcal{O}_{\mathbb{P}^3}(d) \oplus \mathcal{O}_{\mathbb{P}^3}} & {\mathcal{Q}_\sigma} \\
	& {\mathcal{O}_{\mathbb{P}^3}} & {\mathcal{O}_{\mathbb{P}^3}}
	\arrow[hook, from=1-1, to=1-2]
	\arrow["{\nabla \overline{g}}", from=1-2, to=1-3]
	\arrow[hook, from=1-2, to=2-2]
	\arrow[two heads, from=1-3, to=1-4]
	\arrow[hook, from=1-3, to=2-3]
	\arrow[hook, from=2-1, to=2-2]
	\arrow["{\nabla \sigma}", from=2-2, to=2-3]
	\arrow[two heads, from=2-2, to=3-2]
	\arrow[two heads, from=2-3, to=2-4]
	\arrow[two heads, from=2-3, to=3-3]
	\arrow["{\cdot 1}", from=3-2, to=3-3]
\end{tikzcd}\]
and by the snake lemma, there are isomorphisms $\calt_\sigma \simeq \calt_{\overline{g}}$ and $\mathcal{Q}_\sigma \simeq \mathcal{O}_{Z_g}(d)$. On $\mathbb{P}^2$, we have the sequence defining the logarithmic tangent sheaf of the curve $X$:
$$
0 \to \calt_g \to \mathcal{O}_{\mathbb{P}^2} \xrightarrow{(\partial_0 g, \partial_1 g, \partial_2 g)} \mathcal{O}_{\mathbb{P}^3}(d) \to 0,
$$
and a defined Bourbaki degree $\Bour(X)$ (\cite{jardim2023bourbakidegreeplaneprojective}), which coincides with the Chern class $c_2(\calt_g(e))$ when $e = \indeg(\calt_g)$. By the block-form of $\nabla \sigma$,
$$
e = \indeg(\calt_g) = \indeg(\calt_\sigma).
$$
Denoting by $i: \mathbb{P}^2 \simeq V(x_3) \hookrightarrow \mathbb{P}^3$ the inclusion, it is clear that $i^*(\calt_\sigma) \simeq \calt_{g}$ as a logarithmic sheaf over $\mathbb{P}^2$, since $H$ and $Z_g$ intersect transversely. Now, compare the following formulas for the Bourbaki degrees of $\sigma$ and of $X = V(g) \subset \mathbb{P}^2$:
$$
\begin{cases}
c_2(\calt_\sigma(e))&= \Bour(\sigma) = e(e-d_g) + d_g^2 - m(\sigma)\\
c_2(\calt_{g}(e))&= \Bour(X)      = e(e-d_g) + d_g^2 - \tau(X),
\end{cases}
$$
where $\tau(X)$ is the Tjurina number of the curve $X$. From the transversality, we obtain 
$i^*(c(\calt_\sigma)) = c(i^*(\calt_\sigma))$, so that
\begin{align*}
1 - d[H] + (m_0 - m(\sigma))[H]^2 &= i^*(c(\calt_\sigma)) = c(i^*(\calt_\sigma))\\
&= 1 - d[H] + (d^2 - \tau(X))[H]^2.
\end{align*}
Since $m_0 = d_g^2$, we obtain the equality $m(\sigma) = \tau(X)$. Geometrically, this means that $m(\sigma)$ counts the singular lines of the cone $S = V(g) \subset \mathbb{P}^3$ with the same multiplicity as the Tjurina number does. Another perspective for this is through the associativity formula. Note that $\mathcal{Q}_\sigma \otimes \mathcal{O}_{H} \simeq \mathcal{O}_{\Sing(X)}(d)$ or, in terms of rings, if $S \doteq \kappa[x_0, x_1, x_2]$, we have
$$
\frac{S[x_3]}{(\partial_0 g, \partial_1 g, \partial_2 g)} \otimes_S \frac{S[x_3]}{(x_3)} \simeq \frac{S}{(\partial_0 g, \partial_1 g, \partial_2 g)},
$$
and then coming back to $R$ via a flat change of basis $\otimes_{\kappa} \kappa[x_3]$, since $\partial_i g \in \kappa[x_0, x_1, x_2]$. Therefore, for any associated prime $\fp$ of the Jacobian ideal $J_g \subset S$,
the extension satisfies
\begin{align*}
\dim R/(\fp R) &= \dim(S/\fp) + 1\\
\deg R/(\fp R) &= \deg(S/\fp),
\end{align*}
and each associated prime to the cone ideal $J_g R$ comes by an extension to an associated prime of $J_g$ in $S$.
Now, using the associativity formula, we get:
\begin{align*}
\tau(X) &= \sum_{\fp \in \Ass(J_g), \dim(S/\fp) = 1} \length_{S_\fp}(S/J_g)_\fp \cdot \deg(S/\fp)\\
&= \sum_{\fp \in \Ass(J_g R), \dim(R/\fp) = 2} \length_{R_\fp}(R/J_g R)_\fp \cdot \deg(R/\fp) = m(\sigma).
\end{align*}

The following example is a sequence $\sigma = (x_3,g)$ of degree $d$ such that $e = d$ is maximal but $m(\sigma) \neq 0$.

\begin{example}\label[example]{ex:maximal-e-m-not-m0}
Let $f = x_3$ and $g = x_0x_1x_2^2 + x_0^4 + x_1^4$. The quartic plane curve $X = V(g) \subset \mathbb{P}^3_{[x_0:x_1:x_2]}$ is nodal and the minimum degree for a syzygy of $\nabla g$ is $e = d$.

Hence, $m(\sigma) = \tau(X) = 1 \neq 0$, accounting for the nodal singularity, but $e = d = 3$.
\end{example}

\subsection{Free resolutions and Bourbaki schemes}\label[subsection]{sec:resolutions}

In this section, after a choice of syzygy $\nu$ of minimal degree, we relate the resolutions for the ideal sheaf of its zero-locus $\mathcal{I}_{B_\nu}$ and for $\calt_\sigma$. We use this lemma to characterize different classes of sequences.

\begin{Lemma}\label[Lemma]{prop:lifting-resolutions}
Let $\nu \in H^0(\calt_\sigma(e))$ be a nonzero section with $e = \indeg(\calt_\sigma)$ and let $B \subset \mathbb{P}^3$ be the pure codimension $2$ subscheme associated with $\nu$ in a short exact sequence:
$$
0 \rightarrow \mathcal{O}_{\mathbb{P}^3}(-e) \xrightarrow{\nu} \calt_\sigma \xrightarrow{\pi} \mathcal{I}_B(e-d) \rightarrow 0.
$$
Then:
\begin{itemize}
    \item[(a)] Every free resolution for $\mathcal{I}_B$:
$$
0 \rightarrow F_2 \rightarrow F_1 \rightarrow F_0 \xrightarrow{\omega} \mathcal{I}_B \rightarrow 0
$$
lifts to a free resolution
$$
0 \rightarrow F_2(e-d) \rightarrow F_1(e-d) \rightarrow F_0(e-d) \oplus \mathcal{O}_{\mathbb{P}^3}(-e) \xrightarrow{(\omega(e-d), \nu)} \calt_\sigma \rightarrow 0.
$$
\item[(b)] Every minimal free resolution of $\calt_\sigma$ including the section $\nu$:
$$
0 \rightarrow F_2 \rightarrow F_1 \rightarrow F_0 \oplus \mathcal{O}_{\mathbb{P}^3}(-e) \xrightarrow{(\lambda, \nu)} \calt_\sigma \rightarrow 0,
$$
induces a free resolution of $\mathcal{I}_B$:
$$
0 \rightarrow F_2(d-e) \rightarrow F_1(d-e) \rightarrow{} F_0(d-e) \xrightarrow{\lambda(d-e)} \mathcal{I}_B \rightarrow 0.
$$
\end{itemize}
\end{Lemma}

\begin{proof}
To summarize, item $(a)$ follows from the Horseshoe lemma since line bundles have vanishing $\ext^1(-, \mathcal{O}_{\mathbb{P}^3})$, and $(b)$ comes from the mapping cone, after factoring the trivial factors $\mathcal{O}_{\mathbb{P}^3}(-e)$.

To show $(a)$, we apply the functor $\Mor(F_0(e-d), -)$ to the short exact sequence
$$
0 \rightarrow \mathcal{O}_{\mathbb{P}^3}(-e) \xrightarrow{\nu} \calt_\sigma \xrightarrow{\pi} \mathcal{I}_B(e-d) \rightarrow 0,
$$
to get the exact piece:
$$
\Mor(F_0(e-d), \calt_\sigma) \xrightarrow{\pi^*} \Mor(F_0(e-d), \mathcal{I}_B(e-d)) \rightarrow \ext^1(F_0(e-d), \mathcal{O}_{\mathbb{P}^3}(-e)) = 0,
$$
since $
\ext^1(F_0(e-d), \mathcal{O}_{\mathbb{P}^3}(-e)) \simeq H^1(F_0^\vee(-2e-d)) = 0$, as $F_0^\vee$ is a direct sum of line bundles and these have vanishing first cohomology in $\mathbb{P}^3$. Thus, $\pi^*$ is surjective, and there is a morphism $\Tilde{\omega}: F_0(e-d) \rightarrow \calt_\sigma$ such that $\pi \circ \Tilde{\omega} = \omega(e-d)$. We now consider the map $\omega(e-d) \oplus \nu$ in the following commutative diagram with short exact sequences as the central two columns:
\[\begin{tikzcd}
	& {\mathcal{O}_{\mathbb{P}^3}(-e)} & {\mathcal{O}_{\mathbb{P}^3}(-e)} \\
	{\ker(\Tilde{\omega} \oplus \nu)} & {F_0(e-d) \oplus \mathcal{O}_{\mathbb{P}^3}(-e)} & {\calt_\sigma} & {\coker(\Tilde{\omega} \oplus \nu)} \\
	{\ker(\omega(e-d))} & {F_0(e-d)} & {\mathcal{I}_B(e-d)} & 0
	\arrow[Rightarrow, no head, from=1-2, to=1-3]
	\arrow[hook, from=1-2, to=2-2]
	\arrow["\nu", hook, from=1-3, to=2-3]
	\arrow[hook, from=2-1, to=2-2]
	\arrow["{\Tilde{\omega} \oplus \nu}", from=2-2, to=2-3]
	\arrow[two heads, from=2-2, to=3-2]
	\arrow[two heads, from=2-3, to=2-4]
	\arrow["\pi", two heads, from=2-3, to=3-3]
	\arrow[hook, from=3-1, to=3-2]
	\arrow["{\omega(e-d)}", from=3-2, to=3-3]
	\arrow[from=3-3, to=3-4]
\end{tikzcd}\]
From the snake lemma, we obtain that $\coker(\Tilde{\omega} \oplus \nu) = 0$ and that $\ker(\Tilde{\omega} \oplus \nu) \simeq \ker(\omega(e-d))$. Thus, we can continue the resolution for $\mathcal{I}_B$, twisting by $\mathcal{O}_{\mathbb{P}^3}(e-d)$, to obtain the following free resolution:
\[\begin{tikzcd}
	0 & {F_2(e-d)} & {F_1(e-d)} & {F_0(e-d)\oplus \mathcal{O}_{\mathbb{P}^3}(-e)} & {\calt_\sigma} & 0 \\
	&&& {\ker(\omega(e-d))}
	\arrow[from=1-1, to=1-2]
	\arrow[from=1-2, to=1-3]
	\arrow[from=1-3, to=1-4]
	\arrow[two heads, from=1-3, to=2-4]
	\arrow["{\Tilde{\omega}\oplus\nu}", from=1-4, to=1-5]
	\arrow[from=1-5, to=1-6]
	\arrow[hook, from=2-4, to=1-4]
\end{tikzcd}\]
for $\calt_\sigma$, as claimed.

To show $(b)$, we consider the diagram with exact rows induced by the fact above to obtain the short exact sequence in cokernels as the third row below:
\[\begin{tikzcd}
	& {\mathcal{O}_{\mathbb{P}^3}(-e)} & {\mathcal{O}_{\mathbb{P}^3}(-e)} \\
	S & {F_0\oplus \mathcal{O}_{\mathbb{P}^3}(-e)} & {\calt_\sigma} \\
	S & {F_0} & {\mathcal{I}_B(e-d)}
	\arrow[Rightarrow, no head, from=1-2, to=1-3]
    \arrow["\nu", hook, from=1-2, to=2-2]
	\arrow["\nu", hook, from=1-3, to=2-3]
	\arrow[hook, from=2-1, to=2-2]
	\arrow[Rightarrow, no head, from=2-1, to=3-1]
	\arrow[two heads, from=2-2, to=2-3]
	\arrow[two heads, from=2-2, to=3-2]
	\arrow["\pi", two heads, from=2-3, to=3-3]
	\arrow[hook, from=3-1, to=3-2]
	\arrow[two heads, from=3-2, to=3-3]
\end{tikzcd}\]
Completing to the resolution and twisting accordingly, we obtain the resolution from the claim below.
\[\begin{tikzcd}
	0 & {F_2(d-e)} & {F_1(d-e)} & {F_0(d-e)} & {\mathcal{I}_B} & 0 \\
	&&& {S(d-e)}
	\arrow[from=1-1, to=1-2]
	\arrow[from=1-2, to=1-3]
	\arrow[from=1-3, to=1-4]
	\arrow[two heads, from=1-3, to=2-4]
	\arrow[from=1-4, to=1-5]
	\arrow[from=1-5, to=1-6]
	\arrow[hook, from=2-4, to=1-4]
\end{tikzcd}\]
\end{proof}

\begin{defi}\label[defi]{def:geometry-of-B}
Let $\sigma$ be a non-free normal sequence with degrees $d_f + 1, d_g + 1$. We say that $\sigma$ is:
\begin{itemize}
    \item \emph{nearly free} if $\Bour(\sigma) = 1$.
    \item \emph{$3$-syzygy} if there is a minimal free resolution for $\calt_\sigma$ such that $\rank(F_0) = 2$ in the notation of \Cref{prop:lifting-resolutions}, $(b)$.
\end{itemize}
\end{defi}

\begin{example}[Nearly-free sequence with $d_f = 1, d_g = 2$]\label[example]{ex:nearly-free-mixed-degrees-e1}
We consider the following sequence with $d_f = 1$, $d_g = 2$:
$$
\sigma = (x_0^2 + x_3^2, x_0^3 + x_0x_1x_2+x_3^3)
$$
with Jacobian matrix given by
$$
\nabla \sigma = \begin{pmatrix}
2x_0 & 0 & 0 & 2x_3\\
3x_0^2 +x_1x_2 & x_0x_2 & x_0x_1 & 3x_3^2
\end{pmatrix}.
$$
The scheme $(\Xi_\sigma)_1$ has three primary components in dimension one, given by the prime ideals $\fp_1 = (x_0, x_3)$, $\fp_2 = (x_0, x_2)$ and $\fp_3 = (x_0, x_1)$. We also note that $\nu = (-x_3, 0, 0, x_0)^T$ is a syzygy for $\nabla \sigma$ of degree one, so $e \leq 1$. We claim that $m(\sigma) = 4$, which shows that $\sigma$ cannot be compressible (by \Cref{thmA}) and hence $e = 1$. Moreover, with these data, the Bourbaki degree formula yields $\Bour(\sigma) = 1$, so this is a nearly-free sequence. To show $m(\sigma) = 4$, we study the length of the cokernel module $Q$ associated with the sheaf $\mathcal{Q}_\sigma$ over each prime $\fp_i$, $i =1, 2, 3$, and use the associativity formula.

Over $\fp_1 = (x_0, x_3)$, we note that $x_1 x_2 \in R_\fp^\times$ is a unit, and $3x_0^2 \in \fp$ is in the maximal ideal, hence $v = 3x_0^2 + x_1 x_2$ is also invertible, denote $u = v^{-1}$. From this, we may rewrite the matrix as
$$
(\nabla \sigma)_{\fp_1} = \begin{pmatrix}
2x_0 & 0 & 0 & 2x_3\\
1    & ux_0x_2 & ux_0x_1 & 3ux_3^2\\
\end{pmatrix} \sim \begin{pmatrix}
0    & -2x_0^2x_2u             & -2ux_0^2x_1 & 2x_3-6ux_0x_3^2\\
1    & x_0x_2 u                & x_0x_1 u    & 3x_3^2 u\\
\end{pmatrix},
$$
first by multiplying the second row by $u$, and then adding the second row scaled by $-2x_0$ to the first row. From the final form of the matrix above, it is easy to see that it sends the fourth basis vector to $(0,1) \in R_{\fp_1}^2$, and thus we may compute the cokernel as the remaining entries of the first row:
$$
\coker(\nabla\sigma)_{\fp_1} \simeq \frac{R_{\fp_1}}{(-2x_0^2x_2u, -2ux_0^2x_1, 2x_3-6ux_0x_3^2)} \simeq \frac{R_{\fp_1}}{(x_0^2, x_3)},
$$
a $R_{\fp_1}-$module of length two.

Over the prime $\fp_2 = (x_0,x_2)$, the element $x_3 \in R_{\fp_2}$ is an unit. We denote by $u = x_3^{-1} \in R_{\fp_2}$ its inverse, and we rewrite the matrix using elementary operations
$$
(\nabla \sigma)_{\fp_2} = \begin{pmatrix}
2x_0                     & 0 & 0 & 2x_3\\
3x_0^2+x_1x_2-3x_0x_3    & x_0x_2 & x_0x_1 & 0\\
\end{pmatrix} \sim \begin{pmatrix}
u x_0                     & 0 & 0           & 1\\
3x_0^2+x_1x_2-3x_0x_3    & x_0x_2 & x_0x_1 & 0\\
\end{pmatrix},
$$
first by adding the first row scaled by $(-\frac{3}{2}x_3)$ to the second row, and then multiplying the first row by $u/2$. Now, it is clear that the matrix sends the fourth basis vector to $(1,0) \in R_\fp^{2}$, and thus
\begin{align*}
\coker(\nabla \sigma)_{\fp_2} &\simeq \frac{R_{\fp_2}}{(x_0x_1, x_0x_2, 3x_0^2 + x_1 x_2 - x_0x_3)}\\
&\simeq \frac{R_{\fp_2}}{(x_0, x_0x_2, 3x_0^2 + x_1 x_2 - x_0x_3)}\\
&\simeq \frac{R_{\fp_2}}{(x_0, x_0x_2, x_1 x_2)}\\
&\simeq \frac{R_{\fp_2}}{(x_0, x_2)} \simeq \kappa
\end{align*}
since $x_1 \in R_{\fp_2}^\times$. Thus, it follows $\length(\mathcal{Q}_\sigma)_{\fp_2} = 1$. For the ideal $\fp_3$, we note that the approach is analogous as the previous one, using the same elementary operations, and obtaining the same isomorphism at the end $\coker(\nabla \sigma)_{\fp_3} \simeq \kappa$, so that from the associativity formula:
$$
m(\sigma) = \sum_{i=1}^3 \length((\mathcal{Q}_\sigma)_{\fp_i}) \cdot \deg(R/\fp_i) = 2 + 1 + 1 = 4,
$$
as we wanted to show.
\end{example}

\begin{example}[$3$-syzygy pencil of cubics which is not nearly-free]\label[example]{ex:pcubics-pog-not-nf}
Consider the following pencil of cubics:
$$
\sigma = (f,g) =(x_2x_3(x_0 - x_1), x_0(x_0^2 + x_1^2 + x_2^2 + x_3^2))
$$
where $f$ is a hyperplane arrangement and $g$ is the union of a plane and a smooth quadric, with the Jacobian matrix:
$$
\nabla \sigma = \begin{pmatrix}
x_2x_3 & -x_2x_3 & x_3(x_0-x_1) & x_2(x_0-x_1)\\
3x_0^2+x_1^2+x_2^2+x_3^2 & 2x_0x_1 & 2x_0x_2 & 2x_0x_3
\end{pmatrix}.
$$
We claim that here, $m(\sigma) = 5$ and that $\Bour(\sigma) = 4 \neq 1$, so $\sigma$ is not nearly free. Macaulay2 gives the following minimal free resolution for $\calt_\sigma$:
$$
0 \rightarrow \mathcal{O}_{\mathbb{P}^3}(-5) \rightarrow \mathcal{O}_{\mathbb{P}^3}(-3)^{\oplus 3} \rightarrow \calt_\sigma \rightarrow 0,
$$
which is of $3$-syzygy type. The irreducible components of $(\Xi_\sigma)_1$ are three lines and a plane quadric, given by the following prime ideals in $R$:
\begin{align*}
\fp_1 &= (x_2, x_3)\\
\fp_2 &= (x_3, x_0-x_1)\\
\fp_3 &= (x_2, x_0-x_1)\\
\fp_4 &= (x_0, x_0^2 + x_1^2 + x_2^2 + x_3^2)
\end{align*}
For $\fp_1 = (x_2, x_3)$, we note that the elements
\begin{align*}
h &= 3x_0^2 + x_1^2 + x_2^2 + x_3^2\\
v &= x_0x_1\\
t &= (x_0-x_1)
\end{align*}
are all invertible in the local ring $R_{\fp_1}$, where the first one is the sum of an invertible element with an element inside the maximal ideal $(x_2, x_3)_{\fp_1}$. Then, we do two elementary operations:
\begin{align*}
(\nabla \sigma)_{\fp_1} &\sim \begin{pmatrix}
ux_2 x_3 + \frac{1}{2} h x_2 x_3 & 0 & utx_3 + x_0x_2^2x_3 & ut x_2 + x_0x_2x_3^2\\
h & 2v & 2x_0x_2 & 2x_0x_3
\end{pmatrix}\\
&\sim \begin{pmatrix}
ux_2 x_3 + \frac{1}{2} h x_2 x_3 & 0 & utx_3 + x_0x_2^2x_3 & ut x_2 + x_0x_2x_3^2\\
\frac{1}{2}uh & 1 & ux_0x_2 & ux_0x_3
\end{pmatrix}.
\end{align*}
The first is to add $\frac{1}{2}x_2 x_3$ times the second row to the first row, scaled by $u = v^{-1}$. Then, we scale the second row by $\frac{1}{2}u$. Since the matrix sends the second basis vector into $(0,1) \in R_{\fp_1}^2$, it suffices to look at the first row of the remaining matrix, so that
\begin{align*}
\coker(\mathcal{Q}_\sigma)_{\fp_1} &\simeq \frac{R_{\fp_1}}{((u + \frac{1}{2}h)x_2x_3, utx_3+x_0x_2(x_2x_3), ut x_2 + x_0x_3(x_2x_3) )}\\
&\simeq \frac{R_{\fp_1}}{(x_2x_3, utx_3+x_0x_2(x_2x_3), ut x_2 + x_0x_3(x_2x_3) )}\\
&\simeq \frac{R_{\fp_1}}{(x_2x_3, utx_3, ut x_2)} \simeq \frac{R_{\fp_1}}{(x_2, x_3)} \simeq \kappa,
\end{align*}
and hence $\length((\mathcal{Q}_\sigma)_{\fp_1}) = 1$. 

For $\fp_2 = (x_3, x_0-x_1)$, note that $v = x_0x_2 \in R_{\fp_2}$ is a unit and denote $u = v^{-1}$. We consider some elementary operations:
\begin{align*}
(\nabla \sigma)_{\fp_2} &\sim \begin{pmatrix}
x_2 x_3 - \frac{1}{2} u x_3(x_0-x_1)h & -x_2x_3 -\frac{1}{2} u x_3(x_0-x_1)x_0x_1  & 0 & x_2(x_0-x_1) - \frac{1}{2} u x_3(x_0-x_1)x_0x_3\\
h & 2x_0x_1 & 2v & 2x_0x_3
\end{pmatrix}\\
&\sim \begin{pmatrix}
x_2 x_3 - \frac{1}{2} u x_3(x_0-x_1)h & -x_2x_3 -\frac{1}{2} u x_3(x_0-x_1)x_0x_1  & 0 & x_2(x_0-x_1) - \frac{1}{2} u x_3(x_0-x_1)x_0x_3\\
\frac{1}{2}uh & ux_0x_1 & 1 & ux_0x_3
\end{pmatrix}.
\end{align*}
The first transformation is to add the scaled second row (by $-\frac{1}{2}x_3(x_0-x_1)u$) to the first one, and the second transformation is rescaling the second row by $\frac{1}{2}u$. Then, we see that the matrix sends the third basis vector to $(0,1) \in R_{\fp_2}^2$, so we may compute the cokernel as the first row of the remaining matrix:
\begin{align*}
\coker(\mathcal{Q}_\sigma)_{\fp_2} &\simeq \frac{R_{\fp_2}}{(x_3(x_2 - \frac{1}{2}u(x_0-x_1)h), -x_3(x_2+\frac{1}{2}u(x_0-x_1)x_0x_1), (x_0-x_1)(x_2-\frac{1}{2}ux_0x_3^2)))}\\
&\simeq \frac{R_{\fp_2}}{x_3, -x_3, (x_0-x_1)} \simeq \frac{R_{\fp_2}}{(x_3, x_0-x_1)} \simeq \kappa,
\end{align*}
since for example the quantity
$$
x_2 - \frac{1}{2}u(x_0-x_1)h
$$
is of the form a unit $x_2$ plus something inside the maximal ideal $(x_3, x_0-x_1)_{\fp_2}$, it is also a unit. Thus, we obtain $\length((\mathcal{Q}_\sigma)_{\fp_2}) = 1$. An analogous consideration also holds for the case of the prime $\fp_3 = (x_2, x_0-x_1)$, with similar elementary operations, now to trivialize the fourth column as $(0,1)$, and then to show that $(\mathcal{Q}_\sigma)_{\fp_3} \simeq \kappa$.

For the prime $\fp_4 = (x_0, h \doteq x_0^2 + x_1^2 + x_2^2 + x_3^2)$, we note that $v = x_2x_3$ is invertible in $R_{\fp_4}$, denote $u = v^{-1}$ and do the elementary operations
\begin{align*}
(\nabla \sigma)_{\fp_4} &\sim \begin{pmatrix}
v            & -v    & x_3(x_0-x_1) & x_2(x_0-x_1)\\
h+2x_0x_1    &  0    & 2x_0x_2+2x_0x_1x_3(x_0-x_1) & 2x_0x_2x_3(x_0-x_1)\\
\end{pmatrix}\\
&\sim \begin{pmatrix}
-1            & 1    & -ux_3(x_0-x_1) & -ux_2(x_0-x_1)\\
h+2x_0x_1    &  0    & 2x_0x_2+2x_0x_1x_3(x_0-x_1) & 2x_0x_2x_3(x_0-x_1)\\
\end{pmatrix},
\end{align*}
first adding the scaled first row $(by 2x_0x_1u)$ to the second row, and then scaling the first row by $-u$. The matrix sends the second basis vector to $(1,0) \in R_{\fp_4}^{2}$, and thus to compute the cokernel we may consider only the second row of the remaining matrix, so that
\begin{align*}
(\mathcal{Q}_\sigma)_{\fp_4} &\simeq \frac{R_{\fp_4}}{( h+2x_0x_1, x_0(2x_2 + 2x_0x_1x_3(x_0-x_1)), 2x_0x_2x_3(x_0-x_1) )}\\
&\simeq \frac{R_{\fp_4}}{( h+2x_0x_1, x_0,x_0)}\\
&\simeq \frac{R_{\fp_4}}{(x_0, h)} \simeq \kappa,
\end{align*}
noting that the factors $(2x_2 + 2x_0x_1x_3(x_0-x_1))$ and $2x_2x_3(x_0-x_1)$ are units in the local ring. Thus, we obtain that the length at each prime is one, but one of them has multiplicity two, therefore $m(\sigma) = 5$ from the associativity formula.

Using \Cref{prop:lifting-resolutions} with the minimal free resolution for $\calt_\sigma$ above, after a choice of minimal degree syzygy, one gets the following resolution for $B = B_\nu$:
$$
0 \rightarrow \mathcal{O}_{\mathbb{P}^3}(-4) \rightarrow \mathcal{O}_{\mathbb{P}^3}(-2)^{\oplus 2} \rightarrow \mathcal{I}_B \rightarrow 0,
$$
which presents $B$ as a complete intersection of two quadric surfaces in $\mathbb{P}^3$.
\end{example}

For the rest of the section, we study some aspects of these special classes of sequences.

\begin{prop}\label[prop]{prop:nearly-free-seq}
Let $\sigma = (f,g)$ be a normal sequence with degrees $d_f + 1$, $d_g + 1$. Then $\sigma$ is nearly free if and only if the sheaf $\calt_\sigma$ admits a free resolution  of the form:
$$
0 \rightarrow \mathcal{O}_{\mathbb{P}^3}(e-d-2) \rightarrow \mathcal{O}_{\mathbb{P}^3}(e-d-1)^{\oplus 2} \oplus \mathcal{O}_{\mathbb{P}^3}(-e) \rightarrow \calt_\sigma \rightarrow 0,
$$
where $d = d_f + d_g$ and $e = \indeg(\calt_\sigma)$. Moreover, when $\sigma$ is nearly free, the zero-dimensional part of $\Xi_\sigma$ is aligned.
\end{prop}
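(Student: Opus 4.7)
The plan is to handle (a) by passing between free resolutions of $\calt_\sigma$ and of $\mathcal{I}_B$ via \Cref{prop:lifting-resolutions}, once $B$ is identified as a line; then (b) follows from (a) combined with \Cref{rmk:genusB-c3}.

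For the forward direction of (a), nearly-freeness means $\Bour(\sigma) = \deg B = 1$ by \Cref{def:Bour}. The scheme $B$ associated to a minimal syzygy is locally Cohen--Macaulay of pure dimension one, so a pure one-dimensional, locally Cohen--Macaulay scheme in $\mathbb{P}^3$ of degree one must be reduced along its unique generic point and is therefore a line. Its ideal sheaf admits the Koszul resolution
$$
0 \to \mathcal{O}_{\mathbb{P}^3}(-2) \to \mathcal{O}_{\mathbb{P}^3}(-1)^{\oplus 2} \to \mathcal{I}_B \to 0,
$$
and \Cref{prop:lifting-resolutions}(a) lifts this to the resolution claimed in the statement (with $F_2 = 0$, $F_1 = \mathcal{O}_{\mathbb{P}^3}(-2)$, $F_0 = \mathcal{O}_{\mathbb{P}^3}(-1)^{\oplus 2}$, twisted by $\mathcal{O}_{\mathbb{P}^3}(e-d)$ and extended by the minimal syzygy $\nu$).

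For the converse in (a), I would argue directly via Chern classes. Additivity of the Chern character along the given resolution yields $c_1(\calt_\sigma) = -d$ and, after a short calculation, $\ch_2(\calt_\sigma) = e^2 - ed + d^2/2 - 1$, so $c_2(\calt_\sigma) = e(d-e) + 1$. Applying the rank-two twist formula $c_2(\calt_\sigma(e)) = c_2(\calt_\sigma) + e\,c_1(\calt_\sigma) + e^2$ gives $\deg B = c_2(\calt_\sigma(e)) = 1$, so $\Bour(\sigma) = 1$ and $\sigma$ is nearly-free. (Alternatively, one could apply \Cref{prop:lifting-resolutions}(b) directly to the given minimal resolution and read off the Koszul resolution of $\mathcal{I}_B$.)

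For part (b), by (a) the curve $B$ is a line. By \Cref{rmk:genusB-c3}, dualizing the Bourbaki sequence shows that the singular locus of $\calt_\sigma$ is contained in $B$. Since $\calt_\sigma$ is a rank-two reflexive sheaf on the smooth threefold $\mathbb{P}^3$, this singular locus is zero-dimensional. At every isolated zero $p$ of the $(2\times 2)$-minors of $\nabla \sigma$, the matrix has rank two on a punctured neighborhood of $p$ and drops rank at $p$, which forces the reflexive kernel $\calt_\sigma$ to fail local freeness at $p$; hence such isolated zeros belong to $\Sing(\calt_\sigma) \subseteq B$ and so lie on the single line $B$, proving alignment.

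The main obstacle I anticipate is the last step of (b): identifying the isolated zeros of the $(2 \times 2)$-minors with points in $\Sing(\calt_\sigma)$. This is morally clear but a rigorous proof requires comparing the Fitting support of $\mathcal{Q}_\sigma$ with the non-locally-free locus of the reflexive kernel, which may demand a short local Nakayama-type argument at such a point.
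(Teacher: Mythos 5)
Your proposal is correct and, for the main content of (a), follows the same route as the paper: identify $B_\nu$ as a line from $\Bour(\sigma)=\deg(B_\nu)=1$, feed its Koszul resolution into \Cref{prop:lifting-resolutions}(a), and read off the stated resolution of $\calt_\sigma$. The one place you genuinely diverge is the converse of (a): the paper implicitly runs \Cref{prop:lifting-resolutions}(b) backwards to recover the Koszul resolution of $\mathcal{I}_B$, which strictly speaking requires the given resolution to be the minimal one containing $\nu$, whereas your Chern-character computation ($c_1=-d$, $c_2(\calt_\sigma)=e(d-e)+1$, hence $c_2(\calt_\sigma(e))=1$) is insensitive to minimality and so is slightly more robust; both are valid. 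For (b), the paper offers no explicit argument, and the one you supply is the right one: the point you flag as needing care is handled by the four-term exact sequence $0\to\calt_\sigma\to\mathcal{O}_{\mathbb{P}^3}^{\oplus 4}\to\mathcal{O}_{\mathbb{P}^3}(d_f)\oplus\mathcal{O}_{\mathbb{P}^3}(d_g)\to\mathcal{Q}_\sigma\to 0$, which shows that local freeness of $\calt_\sigma$ at $p$ would give $\mathcal{Q}_\sigma$ projective dimension at most $2$, hence depth at least $1$ at $p$ by Auslander--Buchsbaum, contradicting the fact that an isolated point of $\supp(\mathcal{Q}_\sigma)=|\Xi_\sigma|$ carries a nonzero finite-length stalk; thus such points lie in $\Sing(\calt_\sigma)\subseteq B$, a line, by \Cref{rmk:genusB-c3}.
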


\begin{proof}
Note that $\Bour(\sigma) = 1$ if and only if $B = B_\nu$ is a line, for a given choice of non-zero syzygy of minimal degree $\nu$. Then, the equivalence follows from  \Cref{prop:lifting-resolutions}. In general, the singular scheme of $\calt_\sigma$ is supported at the zero-dimensional locus of $\Xi$, and this will be contained inside $B$, a fact which may be seen from the short exact sequence
$$
0 \to \mathcal{O}(-e) \to \calt_\sigma \to \mathcal{I}_B(e-d) \to 0.
$$
\end{proof}

\begin{remark}\label[remark]{rmk:nearly-free-curves}
The notion of \emph{nearly-free curves} for plane curves $V(f) \subset \mathbb{P}^2$ is first introduced by \cite{Dimca-Sticlaru-freenearlyfree2018}, related to rational cuspidal curves. In \cite[Proposition 2.18]{jardim2023bourbakidegreeplaneprojective}, the authors show that $\Bour(f) = 1$ if and only if $V(f) \subset \mathbb{P}^2$ is a nearly free curve in the sense of \cite{Dimca-Sticlaru-freenearlyfree2018} (see \cite[Definition 2.17]{jardim2023bourbakidegreeplaneprojective}). The notion of $3$-syzygy divisors is also present in a number of previous works, for example \cite{TakuroAbe-plusonegenerated-2021}, \cite{dimca2020plane} and \cite{DimcaSticlaru-2025-plusonegeneratedcurves}.
\end{remark}

\begin{example}\label[example]{ex:free-nearlyfree-sequence-of-examples}
Consider $f = x_0^3 + x_0x_1x_2 + x_3^3$ and $g = x_0^{k+1} + x_3^{k+1}$ for $k \geq 2$. Then $d = d_f + d_g = k+2$ and:
\begin{itemize}
    \item For $k = 2$, $(f,g)$ is a free pencil of cubics with $e = 1$, $m(\sigma) = 9$;
    \item For $k > 2$, $(f,g)$ is nearly free with $e = 1$.
\end{itemize}
The Jacobian matrix for $\sigma = (f,g)$ is
$$
\nabla \sigma = \begin{pmatrix}
3x_0^2 + x_1 x_2 & x_0x_2 & x_0x_1 & 3x_3^2\\
(k+1)x_0^k & 0 & 0 & (k+1)x_3^k
\end{pmatrix}.
$$
For $k = 2$, we note that the matrix below:
$$
\begin{pmatrix}
0 & -x_0x_3^2\\
x_1   & 0\\
-x_2 & x_2x_3^2 \\
0   & x_0^3
\end{pmatrix}
$$
gives trivializing syzygies such that $\calt_\sigma \simeq \mathcal{O}_{\mathbb{P}^3}(-1) \oplus \mathcal{O}_{\mathbb{P}^3}(-3)$.

For $k > 2$, we set
$$
M = \begin{pmatrix}
0      & -x_0x_2x_3^k                &-x_0x_1x_3^k\\
x_1    & -3x_0^kx_3^2 + 3x_0^2x_3^k  & 0   \\
-x_2   & x_2^2x_3^k                  & -3x_0^kx_3^2 + 3x_0^2 x_3^k + x_1x_2 x_3^k   \\
0      & x_0^{k+1}x_2                & x_0^{k+1}x_1
\end{pmatrix}, 
\gamma = \begin{pmatrix}
x_0^k x_3^2 - x_0^2 x_3^k\\    
\frac{1}{3}x_1\\
- \frac{1}{3} x_2
\end{pmatrix},
$$
so we obtain a free resolution of $\calt_\sigma$ given by
$$
0 \rightarrow \mathcal{O}_{\mathbb{P}^3}(-d-1) \xrightarrow{\gamma} \mathcal{O}_{\mathbb{P}^3}(-1) \oplus \mathcal{O}_{\mathbb{P}^3}(-d)^{\oplus 2} \xrightarrow{M} \mathcal{T}_\sigma \rightarrow 0,
$$
and in particular $\sigma$ is a nearly-free sequence.
\end{example}

\begin{prop}\label[prop]{prop:nearly-free-not-loc-free}
If $\sigma=(f,g)$ is a nearly-free normal sequence, then $\calt_\sigma$ is not locally free.
\end{prop}

\begin{proof}
Since
$$
c_3(\calt_\sigma) = 2p_a(B) - 2 + \deg(B)(4+d-2e),
$$
assuming $\calt_\sigma$ is locally free, we obtain $c_3(\calt_\sigma) = 0$. On the other hand, since $\sigma$ is nearly free, $\deg(B) = 1$ and $p_a(B) = 0$, hence
$$
e = \frac{d+2}{2},
$$
which implies that $d$ must be even and that $e = d/2 + 1$, giving $h^0(\calt_\sigma(l)) = 0$ whenever $l \leq d/2 = -\mu(\calt_\sigma)$, hence $\calt_\sigma$ is $\mu-$semistable.

If we denote by $E \doteq \calt_\sigma(d/2)$ the normalized vector bundle, we must have $c_1(E) = 0$ and
\begin{align*}
c_2(E) &= c_2(\calt_\sigma) - \frac{d^2}{2} + \frac{d^2}{4}\\
&= m_0 - m(\sigma) - \frac{d^2}{4}.
\end{align*}
On the other hand, using the Bourbaki degree formula with $\Bour(\sigma) = 1$ and $e = \frac{d+2}{2}$, we obtain
\begin{align*}
1=\Bour(\sigma) &= \frac{(d+2)^2}{4}-\frac{d(d+2)}{2}+m_0 - m(\sigma)\\
&= \frac{d^2}{4} - \frac{d^2}{2} - d+\frac{4d}{4}+1+m_0 - m(\sigma),
\end{align*}
which yields $
m_0-m(\sigma)-\frac{d^2}{4} = 0$, so that $c_2(E) = 0$. But a $\mu-$semistable reflexive sheaf with $c_1(E) = c_2(E) = 0$ must be $E \simeq \mathcal{O}_{\mathbb{P}^3}^{\oplus 2}$ (by \cite[Lemma 9.7]{Hartshorne1980}), implying $\sigma$ is free, contradicting the hypothesis.
\end{proof}

\begin{prop}\label[prop]{prop:pog-seq}
If a normal sequence $\sigma = (f,g)$ is a $3$-syzygy, then $\gpdim(\calt_\sigma) = 1$. Moreover, a sequence $\sigma$ is $3$-syzygy if and only if $B_\nu$ is a complete intersection, for $\nu \in H^0(\calt_\sigma(e)) \setminus \{0\}$, $e = \indeg(\calt_\sigma)$.
\end{prop}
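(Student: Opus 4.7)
The plan is to use \Cref{prop:lifting-resolutions} as the main technical tool, exploiting the dictionary it provides between minimal free resolutions of $\calt_\sigma$ containing the distinguished section $\nu \in H^0(\calt_\sigma(e))$ and free resolutions of the ideal sheaf $\mathcal{I}_{B_\nu}$. Under this correspondence, the condition $\rank(F_0) = 2$ defining a $3$-syzygy sequence should translate into the statement that $\mathcal{I}_{B_\nu}$ is minimally generated by two elements; and since $B_\nu$ is a pure codimension two, locally Cohen--Macaulay curve, a two-generator ideal of this shape is automatically a complete intersection. The assertion $\gpdim(\calt_\sigma) = 1$ will then follow from the fact that the Koszul resolution of a codimension-two complete intersection has length one.

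For the direction ``$\sigma$ is $3$-syzygy $\Rightarrow B_\nu$ is a complete intersection'', I start from a minimal free resolution
$$
0 \to F_2 \to F_1 \to F_0 \oplus \mathcal{O}_{\mathbb{P}^3}(-e) \to \calt_\sigma \to 0
$$
with $\rank F_0 = 2$ and apply \Cref{prop:lifting-resolutions}(b) to obtain the resolution $0 \to F_2(d-e) \to F_1(d-e) \to F_0(d-e) \to \mathcal{I}_{B_\nu} \to 0$. Because $B_\nu$ has positive codimension $2$, $\mathcal{I}_{B_\nu}$ cannot be principal, so it is minimally generated by exactly two elements $h_1, h_2$. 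Invoking the standard regular-sequence criterion for unmixed codimension-two ideals in a Cohen--Macaulay ambient (combined with the locally Cohen--Macaulay hypothesis on $B_\nu$), the pair $(h_1,h_2)$ is a regular sequence, hence $B_\nu = V(h_1, h_2)$ is a complete intersection.

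For the converse, given $B_\nu = V(h_1, h_2)$ with $\deg h_i = a_i$, the Koszul complex gives a length-one minimal free resolution of $\mathcal{I}_{B_\nu}$. Lifting through \Cref{prop:lifting-resolutions}(a) yields
$$
0 \to \mathcal{O}_{\mathbb{P}^3}(e - d - a_1 - a_2) \to \mathcal{O}_{\mathbb{P}^3}(e-d-a_1) \oplus \mathcal{O}_{\mathbb{P}^3}(e-d-a_2) \oplus \mathcal{O}_{\mathbb{P}^3}(-e) \to \calt_\sigma \to 0.
$$
This length-one locally free resolution already proves the first statement $\gpdim(\calt_\sigma) = 1$ (combining this lift with the forward direction), and what remains is to show the lift is \emph{minimal}, so that the $3$-syzygy condition $\rank F_0 = 2$ is met. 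This verification is the step I expect to be the main obstacle: possible cancellation of the kernel term against $\mathcal{O}_{\mathbb{P}^3}(e-d-a_i)$ would force $a_j = 0$ for the other index, impossible since $h_j$ defines a proper hypersurface; cancellation against $\mathcal{O}_{\mathbb{P}^3}(-e)$ would exhibit $\calt_\sigma$ as a direct sum of two line bundles, contradicting the non-free standing hypothesis built into \Cref{def:geometry-of-B}. Ruling out these two degenerate cases establishes minimality and concludes the proof.
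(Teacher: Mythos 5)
Your proof is correct. For the equivalence with complete intersections you follow essentially the same route as the paper: \Cref{prop:lifting-resolutions}(b) converts a three-generator presentation of $\calt_\sigma$ into a two-generator presentation of $\mathcal{I}_{B_\nu}$, and \Cref{prop:lifting-resolutions}(a) lifts the Koszul resolution back. You are more explicit than the paper on two points it leaves implicit: first, that two generators of a height-two ideal in the Cohen--Macaulay ambient automatically form a regular sequence (note that this uses only the Cohen--Macaulayness of $R$; the local Cohen--Macaulayness of $B_\nu$ that you invoke is not actually needed), and second, that the lifted Koszul resolution is \emph{minimal} --- your cancellation analysis, excluding $a_j = 0$ and excluding a splitting of $\calt_\sigma$ via the non-freeness hypothesis built into \Cref{def:geometry-of-B}, supplies a verification the paper omits. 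Where you genuinely diverge is the claim $\gpdim(\calt_\sigma) = 1$: you deduce it a posteriori from the complete-intersection characterization by lifting the length-one Koszul resolution, whereas the paper proves it directly by splitting the minimal resolution of $\calt_\sigma$ and observing that the first syzygy sheaf $S$, being the kernel of a map from a locally free sheaf onto the torsion-free sheaf $\calt_\sigma$, is reflexive of rank one by \cite[Proposition 1.1]{Hartshorne1980}, hence a line bundle. The paper's argument for that part is self-contained and does not pass through the Bourbaki scheme; yours is shorter once the equivalence is in hand but makes the first assertion logically dependent on the second. Both are valid.
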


\begin{proof}
First, if we assume $\sigma$ is $3$-syzygy, then there is a free resolution of the form:
$$
0 \rightarrow F_2 \rightarrow F_1 \rightarrow F_0 \oplus \mathcal{O}_{\mathbb{P}^3}(-e) \xrightarrow{\lambda} \calt_\sigma \rightarrow 0
$$
with $\rank(F_0) = 2$. Split the resolution into two short exact sequences:
$$
F_2 \hookrightarrow F_1 \twoheadrightarrow S \text{ and } S \hookrightarrow F_0 \oplus \mathcal{O}_{\mathbb{P}^3}(-e) \twoheadrightarrow \calt_\sigma,
$$
and focus on the second one. The sheaf $S$ is the kernel of a map between a locally free sheaf $F_0'$ and a torsion-free sheaf $\calt_\sigma$, thus $S$ is reflexive, from \cite[Proposition 1.1]{Hartshorne1980}. Furthermore, since $\rank(F_0) = 2$ and $\rank(\calt_\sigma) = 2$, $S$ is a reflexive sheaf of rank one, thus $S \simeq \mathcal{O}_{\mathbb{P}^3}(-k)$ for some $k \in \mathbb{Z}$, hence
$$
S \simeq \mathcal{O}_{\mathbb{P}^3}(-k) \hookrightarrow F_0' \twoheadrightarrow \calt_\sigma
$$
is a free resolution for $\calt_\sigma$, concluding $\gpdim(\calt_\sigma) = 1$.

With this and \Cref{prop:lifting-resolutions}, we conclude that $\sigma$ is $3$-syzygy if and only if $B = B_\nu$ admits a resolution of the form:
$$
0 \to \mathcal{O}_{\mathbb{P}^3}(-k) \to \mathcal{O}_{\mathbb{P}^3}(-a) \oplus \mathcal{O}_{\mathbb{P}^3}(-b) \to \mathcal{I}_B \to 0,
$$
which is equivalent to the fact that $B \subset \mathbb{P}^3$ is a complete intersection.
\end{proof}

\begin{remark}\label[remark]{rmk:chain-of-implications-geometry-of-B}
We note that there is a chain of implications:
$$
\sigma \text{ is nearly free } \Rightarrow \sigma \text{ is $3$-syzygy } \Rightarrow \gpdim(\calt_\sigma) = 1,
$$
from \Cref{prop:nearly-free-seq} and \Cref{prop:pog-seq}. The converses do not hold, as we explore in the examples: there are $3$-syzygy pencils of cubics which are not nearly free (\Cref{ex:pcubics-pog-not-nf}), and the next example is a pencil of cubics satisfying $\gpdim(\calt_\sigma) = 1$ but not $3$-syzygy.

\end{remark}

\begin{example}[$\gpdim(\calt_\sigma) = 1$, not $3$-syzygy]\label[example]{ex:pencilcubics-Bour4-m5-notpog}
Consider the following pencil of cubics:
$$
\sigma = (x_0^2x_2 + x_0x_1x_3 + x_3^3, x_2^3 + x_1x_2x_3+x_3^3)
$$
with Jacobian matrix
$$
\nabla \sigma = \begin{pmatrix}
2x_0x_2 + x_1 x_3 & x_0x_3 & x_0^2 & x_0x_1 + 3x_3^2\\
0 & x_2x_3 & 3x_2^2 + x_1 x_3 & x_1x_2+ 3x_3^2
\end{pmatrix}.
$$
The scheme $(\Xi_\sigma)_1$ has three irreducible components, given by two lines $V(x_0, x_3), 
V(x_2, x_3)$ and a plane quadric curve $V(x_0-x_2, 2x_2^2 + x_1x_3)$. Using the associativity formula, one may compute $m(\sigma) = 5$.

The minimal free resolution of $\calt_\sigma$ given using Macaulay2 is
$$
0 \rightarrow \mathcal{O}_{\mathbb{P}^3}(-4) \oplus \mathcal{O}_{\mathbb{P}^3}(-5) \xrightarrow{M} \mathcal{O}_{\mathbb{P}^3}(-3)^{\oplus 3} \oplus \mathcal{O}_{\mathbb{P}^3}(-4) \rightarrow \calt_\sigma \rightarrow 0,
$$
so that $e = 3$ and $\Bour(\sigma) = 4$. This means $\gpdim(\calt_\sigma) = 1$, but we need $4$ syzygies to generate $\calt_\sigma$ or, equivalently, the degree four Bourbaki scheme is not a complete intersection.
\end{example}

\begin{example}[pencil of cubics with $\gpdim(\calt_\sigma) = 2$]\label[example]{ex:pencil-of-cubics-bour2}
We consider the sequence of cubics $d_f = d_g = 2$ given by:
$$
\sigma = (x_0x_1^2+x_2^3+x_2^2x_3, x_2x_3(x_2 -x_1)),
$$
considered in \cite[Theorem 8.1]{faenzi:hal-03271244}. From their proof, we know that $\calt_\sigma(2)$ is a null correlation bundle. Therefore, we obtain $\Bour(\sigma) = 2$, and a free resolution for $\calt_\sigma$ is given by:
$$
0 \rightarrow \mathcal{O}_{\mathbb{P}^3}(-5) \to \mathcal{O}_{\mathbb{P}^3}(-4)^{\oplus 4} \to \mathcal{O}_{\mathbb{P}^3}(-3)^{\oplus 5} \rightarrow \calt_\sigma \rightarrow 0.
$$
So $e = 3$ and $m(\sigma) = 7$. The irreducible components of $(\Xi_\sigma)_1$ are three lines $V(x_2, x_3), V(x_1 - x_2, x_3)$ and $V(x_1, x_2)$.
\end{example}

For normal sequences $\sigma = (f,g)$, the triples $(\indeg(\calt_\sigma, \Bour(\sigma), m(\sigma))$ are not enough ensure the homological behavior of the sheaf $\calt_\sigma$. The resolutions for the associated Bourbaki schemes also fix the arithmetic genus $p_a(B)$, which describes the third Chern class $c_3(\calt_\sigma)$ of the sheaf. Even if these are fixed, there are curves with fixed degree and genus with different minimal free resolutions. The next example has the same triple $(\indeg(\calt_\sigma, \Bour(\sigma), m(\sigma))$ as in the $3$-syzygy \Cref{ex:pcubics-pog-not-nf} seen before, but this one has $\gpdim(\calt_\sigma) = 2$.

\begin{example}\label[example]{ex:pencilcubics-Bour4-m5-pog-c3-8}
$$
\sigma = (x_0^3 + x_0x_1x_3 + x_3^3, x_3^3 + x_1x_3^2 + x_0x_1x_3 + x_0^2x_2),
$$
with the associated Jacobian matrix given by:
$$
\nabla \sigma = \begin{pmatrix}
3x_0^2+ x_1x_3 & x_0x_3 & 0 & x_0x_1+3x_3^2\\
2x_0x_2+x_1x_3 & x_0x_3+x_3^2& x_0^2 & x_0x_1+2x_1x_3+3x_3^2
\end{pmatrix}.
$$
Here, the unique irreducible component of $(\Xi_\sigma)_1$ is the line $V(x_0, x_3)$. Using Macaulay2, we obtain a free resolution for $\calt_\sigma$ of the form:
$$
0 \rightarrow \mathcal{O}_{\mathbb{P}^3}(-6)^{\oplus 2} \rightarrow \mathcal{O}_{\mathbb{P}^3}(-5)^{\oplus 7} \rightarrow \mathcal{O}_{\mathbb{P}^3}(-4)^{\oplus 6} \oplus \mathcal{O}_{\mathbb{P}^3}(-3) \rightarrow \calt_\sigma \rightarrow 0,
$$
so that $e = 3$, $\Bour(\sigma) = 4$ and $m(\sigma) = 5$, with $c_3(\calt_\sigma) = 8$ and $\gpdim(\calt_\sigma) = 2$.
A resolution for $\mathcal{I}_B$ will be of the form:
$$
0 \rightarrow \mathcal{O}_{\mathbb{P}^3}(-5)^{\oplus 2} \rightarrow \mathcal{O}_{\mathbb{P}^3}(-4)^{\oplus 7} \rightarrow \mathcal{O}_{\mathbb{P}^3}(-3)^{\oplus 6} \rightarrow \mathcal{I}_B \rightarrow 0.
$$
\end{example}

In \Cref{sec:special-families} we characterize all nearly-free pencils of cubics $(d_f = d_g = 2)$ and all nearly-free sequences with $d_f = 1, d_g = 2$.

\section{Extreme cases of low initial degree}\label[section]{sec:lower-initial-indeg}

As explored in \cite[Section 9]{faenzi:hal-03271244}, a sequence $\sigma = (f,g)$ defines a codimension-one foliation with a corresponding short exact sequence:
\begin{align*}
\mathcal{F}_\sigma : 0 \rightarrow \calt_\sigma(1) \rightarrow \mathbb{T}\mathbb{P}^3 \rightarrow \mathcal{I}_{\Gamma_\sigma}(d+2) \rightarrow 0,
\end{align*}
where $\Gamma_\sigma \subset \mathbb{P}^3$ is called the \emph{singular scheme} of $\mathcal{F}_\sigma$. Then, assuming $e = \indeg(\calt_\sigma)$, there is a nonzero section of $\nu \in \text{Hom}(\mathcal{O}_{\mathbb{P}^3}(1-e), \calt_\sigma(1))$, inducing the commutative diagram with exact rows below
\[\begin{tikzcd}
	& {\mathcal{O}_{\mathbb{P}^3}}(1-e) & {\mathcal{O}_{\mathbb{P}^3}}(1-e) \\
	0 & {\calt_\sigma(1)} & {\mathbb{T}\mathbb{P}^3} & {\mathcal{I}_{\Gamma_\sigma}(d+2)} & 0 \\
	0 & {\mathcal{I}_B(e-d+1)} & G & {\mathcal{I}_{\Gamma_\sigma}(d+2)} & 0
	\arrow[Rightarrow, no head, from=1-2, to=1-3]
	\arrow[hook, from=1-2, to=2-2]
	\arrow[hook, from=1-3, to=2-3]
	\arrow[from=2-1, to=2-2]
	\arrow[from=2-2, to=2-3]
	\arrow[two heads, from=2-2, to=3-2]
	\arrow[from=2-3, to=2-4]
	\arrow[two heads, from=2-3, to=3-3]
	\arrow[from=2-4, to=2-5]
	\arrow[Rightarrow, no head, from=2-4, to=3-4]
	\arrow[from=3-1, to=3-2]
	\arrow[from=3-2, to=3-3]
	\arrow[from=3-3, to=3-4]
	\arrow[from=3-4, to=3-5]
\end{tikzcd}\]
The middle column of the previous diagram
$$
0 \rightarrow \mathcal{O}_{\mathbb{P}^3}(1-e) \rightarrow \mathbb{T}\mathbb{P}^3 \rightarrow G \rightarrow 0
$$
defines a foliation by curves of $\mathbb{P}^3$ of degree $e$, where $G$ is a rank two torsion-free sheaf and $G^\vee$ is called the \emph{conormal sheaf} of the foliation. Dualizing this short exact sequence, we obtain
$$
0 \rightarrow G^\vee \rightarrow \Omega^1_{\mathbb{P}^3} \rightarrow \mathcal{I}_W(e-1) \rightarrow 0,
$$
defining a subscheme $W \subset \mathbb{P}^3$, called the \emph{singular scheme} of the associated foliation by curves. It has codimension at least two and it is also described by $\inext^1(G, \mathcal{O}_{\mathbb{P}^3}) \simeq \mathcal{O}_W(e-1)$.

We derive numerical restrictions for $e \in \{1, 2\}$ using the classification of foliations by curves in $\mathbb{P}^3$ of degrees one and two. A similar strategy was employed to study distributions on $\mathbb{P}^3$ with unstable tangent sheaf in \cite{barbassa2026distributionsunstabletangentsheaf}, and general considerations about the minimal degree of sub-foliations have been considered in \cite{Mendson_Pereira_2024}.

One may localize the diagram above at a minimal prime of the scheme $B$, arguing as in \cite[Proposition 3.4]{correa2026holomorphicfoliationsdegreearbitrary}. Over this prime, both sheaves $\calt_\sigma$ and $\mathbb{T}\mathbb{P}^3$ split as sums of line bundles, and we obtain a diagram
\[\begin{tikzcd}
	{\mathcal{O}_{\mathbb{P}^3, p}(1-e)} & {\mathcal{O}_{\mathbb{P}^3, p}(1-e)} \\
	{\mathcal{O}_{\mathbb{P}^3, p}(-a) \oplus \mathcal{O}_{\mathbb{P}^3, p}(-b)} & {\bigoplus_{i=1}^3 \mathcal{O}_{\mathbb{P}^3, p}(l_i)} \\
	{\mathcal{I}_B(e-d+1)_p} & {G_p}
	\arrow[equals, from=1-1, to=1-2]
	\arrow["\nu", from=1-1, to=2-1]
	\arrow["{\nu'}"', from=1-2, to=2-2]
	\arrow["L", from=2-1, to=2-2]
	\arrow[two heads, from=2-1, to=3-1]
	\arrow[two heads, from=2-2, to=3-2]
\end{tikzcd}\]
describing $W$ and $B$ via matrix presentations, as the vanishing set of $\nu'$ and $\nu$, respectively. From the composition of matrices $L \cdot \nu = \nu'$ we conclude minors of $\nu'$ are factors of the minors of $\nu$, and thus $B \subset W$ at every associated prime, hence $B \subset W$ schematically. Now, we turn to the classification of schemes $W$ for foliations by curves on $\mathbb{P}^3$ of degrees $e = 1, 2$:
\begin{Lemma}\label[Lemma]{lem:classification-foliation-by-curves}
Let
$$
\mathcal{F} : 0 \to \mathcal{O}_{\mathbb{P}^3}(1-e) \to \mathbb{T}\mathbb{P}^3 \to G \to 0
$$
be a foliation by curves of degree $e \geq 0$ in $\mathbb{P}^3$, with singular scheme $W \subset \mathbb{P}^3$. Then:
\begin{itemize}
    \item[(a)](\cite[Theorem $4$]{galeano2022codimension}) If $e = 1$, then $W$ is either a $0$-dimensional scheme of length $4$, a union of a line with a zero-dimensional scheme of length two or a pair of skew lines, in which case $G^\vee \simeq \mathcal{O}_{\mathbb{P}^3}(-2)^{\oplus 2}$. In particular, $\deg(W) \leq 2$.
    \item[(b)](in preparation, V. Cordeiro) If $e = 2$, then $\deg(W) \leq 5$.
\end{itemize}    
\end{Lemma}

\begin{proof}
For completeness, we include an argument for $(b)$, from V. Cordeiro: using the short exact sequence
$$
0 \to G^\vee \to \Omega^1_{\mathbb{P}^3} \to \mathcal{I}_W(1) \to 0,
$$
one obtains $c_2(G^\vee) = 11-\deg(W)\leq 11$ (see, for example, \cite[4.1, 4.2]{correa2023classification}).If $G^\vee$ is stable, Bogomolov's inequality says $c_2(G^\vee) \geq 7$, hence $\deg(W) \leq 4$. Now, assuming that $G^\vee$ is not stable, $h^0(G^\vee(2)) \neq 0$ and one may choose a nonzero section to form a sequence
$$
0 \to \mathcal{O}_{\mathbb{P}^3}(-2) \to G^\vee \to \mathcal{I}_Y(-3) \to 0
$$
where $Y$ is the vanishing locus of this section. From the sequence, we get $0 \leq \deg(Y) = c_2(\mathcal{I}_Y) = c_2(G^\vee) - 6$, so $c_2(G^\vee) \geq 6$, and $\deg(W) \leq 5$.    
\end{proof}

\phantomsection\label{thm:second}
\begin{theoremB}\label[theoremB]{thmB}
\textit{Let $\sigma = (f,g)$ be a normal sequence of polynomials of degrees $d_f + 1, d_g + 1$. Then:
\begin{itemize}
    \item[(a)] If $\indeg(\calt_\sigma) = 1$, then $\Bour(\sigma) \in \{0,  1, 2\}$;
    \item[(b)] If $\indeg(\calt_\sigma) = 2$, then $\Bour(\sigma) \leq 5$.
\end{itemize}}
\end{theoremB} 

\begin{proof}
The result follows from the schematic inclusion $B \subset W$ and from the classification results for the singular scheme $W$, since $\deg(B) \leq \deg(W)$.
\end{proof}

To be able to present the next applications, we will need the following result on the structure of degree $2$ space curves in $\mathbb{P}^3$:

\begin{theorem}\cite[1.4-1.6]{Nollet1997}\label[theorem]{thm:mult-2-lines}
Let $B \subset \mathbb{P}^3$ be a curve of degree $2$ and genus $p_a(B) = -1-a$, for $a \in \mathbb{Z}$. Then:
\begin{itemize}
    \item[(a)] $a \geq -1$, and $a = -1$ if and only if $B$ is planar; 
    \item[(b)] For $a \geq 1$, $B$ must be a multiplicity two structure at a line $L \subset \mathbb{P}^3$, and these satisfy a short exact sequence of the form
    $$
    0 \rightarrow \mathcal{O}_L(a) \rightarrow \mathcal{O}_B \rightarrow \mathcal{O}_L \rightarrow 0;
    $$
    \item[(c)] For $a \geq 1$, if $B$ is a multiplicity two structure on a line, then $\omega_B \simeq \mathcal{O}_{B}(-a-2)$. If $a=0$ and $B$ is a union of two skew lines, then $\omega_B \simeq \mathcal{O}_{B}(-2)$.
\end{itemize}
\end{theorem}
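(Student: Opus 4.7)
My approach is to split the analysis into reduced and non-reduced cases, enumerate the former directly, and treat the latter via Ferrand's construction of multiplicity-two locally Cohen--Macaulay structures on a smooth line; the dualizing sheaf in (c) will then be computed by Grothendieck duality on $\mathbb{P}^3$.

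For the reduced case, a pure one-dimensional degree-two curve $B \subset \mathbb{P}^3$ must be either an irreducible conic (smooth or nodal, necessarily planar since a degree-two curve spans at most a $\mathbb{P}^2$, with $p_a = 0$), a pair of distinct lines meeting at a point (planar, $\chi(\mathcal{O}_B) = 1$, so $p_a = 0$), or a pair of skew lines ($\chi(\mathcal{O}_B) = 2$, so $p_a = -1$); this already yields $a \in \{-1, 0\}$ with $a = -1$ exactly when $B$ is planar. In the non-reduced case, $B_{\mathrm{red}}$ must be a single line $L$, so $B$ is a multiplicity-two structure on $L$. By Ferrand's construction, such locally Cohen--Macaulay structures correspond to line-bundle quotients $\mathcal{I}_L / \mathcal{I}_L^2 \twoheadrightarrow \mathcal{F}$, and since $\mathcal{I}_L / \mathcal{I}_L^2 \simeq \mathcal{O}_L(-1)^{\oplus 2}$ on $L \simeq \mathbb{P}^1$, the possible $\mathcal{F}$ are exactly $\mathcal{O}_L(a)$ with $a \geq -1$; the defining sequence $0 \to \mathcal{O}_L(a) \to \mathcal{O}_B \to \mathcal{O}_L \to 0$ then gives $p_a(B) = -1 - a$ by additivity of Euler characteristic. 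Combining the two cases proves (a) and (b): $a \geq -1$ always; $a = -1$ forces planarity (the reduced classification handles the reduced case, while in the double-line sub-case the surjection $\mathcal{O}_L(-1)^{\oplus 2} \twoheadrightarrow \mathcal{O}_L(-1)$ is, after change of basis, a coordinate projection whose kernel lifts to the equation of a plane containing $L$ that cuts out $B$); and since reduced curves give $a \leq 0$, any $a \geq 1$ must come from such a multiplicity-two structure.

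For (c), the skew-lines case is immediate from the disjoint decomposition $B = L_1 \sqcup L_2$: $\omega_B|_{L_i} \simeq \omega_{L_i} \simeq \mathcal{O}_{L_i}(-2)$, hence $\omega_B \simeq \mathcal{O}_B(-2)$. For the multiplicity-two case with $a \geq 1$, I would apply $\inhom(-, \omega_{\mathbb{P}^3})$ together with the local-to-global Ext spectral sequence to the defining sequence to obtain a short exact sequence $0 \to \omega_L \to \omega_B \to \omega_L(-a) \to 0$, whose outer terms are $\mathcal{O}_L(-2)$ and $\mathcal{O}_L(-a-2)$ after adjunction on $L$. The main obstacle I expect is upgrading this filtration of $\omega_B$ to a global isomorphism with $\mathcal{O}_B(-a-2)$: for this I would use that multiplicity-two locally Cohen--Macaulay thickenings of a smooth curve are locally complete intersections, hence Gorenstein, so $\omega_B$ is a line bundle on $B$; a Hilbert polynomial comparison together with the construction of a nowhere-vanishing section of $\omega_B(a+2)$, obtained by lifting the canonical identification on the quotient $\mathcal{O}_L$ through the filtration, should then force the claimed isomorphism.
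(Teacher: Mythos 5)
Your proposal is correct, and it diverges from the paper's treatment in two instructive ways. First, for parts (a) and (b) the paper simply cites Nollet and proves nothing, whereas you reconstruct the classification from scratch (reduced case by direct enumeration, non-reduced case via Ferrand's correspondence between multiplicity-two locally Cohen--Macaulay structures on $L$ and line-bundle quotients of $\mathcal{I}_L/\mathcal{I}_L^2 \simeq \mathcal{O}_L(-1)^{\oplus 2}$); this is sound and self-contained, at the cost of invoking the nontrivial input that a multiplicity-two lCM thickening of $L$ contains $V(\mathcal{I}_L^2)$ scheme-theoretically, which is exactly what the citation to Nollet/Ferrand is carrying. Second, for part (c) with $a \geq 1$ both you and the paper dualize the Ferrand sequence to get $0 \to \mathcal{O}_L(a) \to \omega_B(a+2) \to \mathcal{O}_L \to 0$, but the paper then concludes by observing that this ``coincides with'' the sequence in (b) --- which, taken literally, is not a valid deduction, since an extension is not determined by its sub and quotient. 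You correctly flag this as the main obstacle and close it: Ferrand doublings are local complete intersections, hence Gorenstein, so $\omega_B$ is a line bundle; $H^1(\mathcal{O}_L(a)) = 0$ lets you lift $1 \in H^0(\mathcal{O}_L)$ to a section of $\omega_B(a+2)$, which is nowhere vanishing by Nakayama (its image is a unit modulo the nilpotent ideal), giving a surjection $\mathcal{O}_B \to \omega_B(a+2)$ between sheaves with equal Hilbert polynomial, hence an isomorphism. Your version is therefore strictly more complete than the paper's on this point; the skew-lines case is identical in both.
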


For the case $e = 1$ and $\Bour(\sigma) = 2$, one gets $B = W$ and we can predict the Chern classes for the logarithmic sheaf, by the following proposition. We note that we have not been able to construct examples of sequences of this kind.

\begin{prop}\label[prop]{prop:e-1-Bour-2-invariants}
Let $\sigma = (f,g)$ be a normal sequence with $e = 1$ and $\Bour(\sigma) = 2$. Then, the Chern classes of $\calt_\sigma$ are $(-d, d+1, 2d)$.
\end{prop}

\begin{proof}
From the formulas $c_2(\calt_\sigma) = m_0 - m(\sigma)$ and $\Bour(\sigma) = 2$, we get $c_2(\calt_\sigma) = d+1$. Since $B = W$ is a pair of skew lines in this case, $p_a(B) = -1$ and from
$$
c_3(\calt_\sigma) = 2p_a(B) - 2 + \deg(B) (4 + d - 2e)
$$
it follows that $c_3(\calt_\sigma) = 2d$.
\end{proof}

The last proposition of the section also uses the foliation structure, and it follows from \cite[Proposition B]{correa2026holomorphicfoliationsdegreearbitrary}.

\begin{prop}
Let $\sigma = (f,g)$ be a normal sequence with $\indeg(\calt_\sigma) = 1$. If $\sigma$ is not free, then $\calt_\sigma$ is not locally free.
\end{prop}

\section{Pencils of cubics and degree 6 curves inside quadric surfaces}\label[section]{sec:special-families}

In this final section, we show some classification results for pencils of cubics and sequences $\sigma = (f,g)$ with $d_f = 1$, $d_g = 2$, which correspond to degree $6$ curves inside quadric surfaces. The results are derived from the previous sections, \Cref{subsec:bourbaki-degree-first} and \Cref{sec:lower-initial-indeg}, and also from general results for reflexive sheaves of rank two on $\mathbb{P}^3$, found in the classical works \cite{Hartshorne1980}, \cite{sols1981stable}, \cite{Hartshorne1982}, \cite{chang1984stable} and \cite{hartshorne1988stable}. We recall two major results:

\begin{theorem}[\cite{hartshorne1988stable}, Theorem 1.1]\label[theorem]{thm:hartshorne1988}
Let $\mathcal{E}$ be a rank two reflexive sheaf on $\mathbb{P}^3$. Assume $c_1 \geq -3$ and $h^0(\mathcal{E}) = 0$. Define the integers:
\begin{align*}
A &\doteq \left\lceil \frac{1}{3}(c_1^2+2c_1 + 3) \right\rceil, \text{resp. ditto +1 if }c_1 = 1, 3;\\
B &\doteq \left\lceil \frac{1}{3} (c_1^2 + 3c_1 + 8)  \right\rceil, \text{resp. ditto +1 if }c_1 = 2, 4; \text{ditto -1 if }c_1 = -3.
\end{align*}
Then $c_2 \geq A$. Furthermore:
\begin{itemize}
    \item[(a)] If $A \leq c_2 \leq B$, then
    $$
    c_3 \leq (c_1 + 4)c_2 - 2\binom{c_1 + 3}{3}-2.
    $$
    \item[(b)] If $c_2 > B$, then
    $$
    c_3 \leq c_2^2 - c_2(2B - c_1 - 5) + B^2 - B - 2\binom{c_1 + 3}{3} - 2.
    $$
\end{itemize}
\end{theorem}

With this theorem, we obtain an upper bound for $m(\sigma)$ lower than $m_0$ when $\sigma$ is incompressible, so that $h^0(\calt_\sigma) = 0$, in the case $d_f = 1, d_g = 2$. In the case of pencils of cubics, however, we apply this to $\mathcal{E} = \calt_\sigma(1)$, so we must assume $\indeg(\calt_\sigma) > 1$ and treat the case $\indeg(\calt_\sigma) = 1$ separately. 

Another important result is the following, which enables us to obtain a lower bound for the existence of a syzygy in some cases:

\begin{theorem}[\cite{Hartshorne1982}, Theorem 0.1]\label[theorem]{thm:hartshorne1982}
Let $\mathcal{E}$ be a rank two reflexive sheaf on $\mathbb{P}^3$ with $c_1 = 0$ of $c_1 = -1$ and with $c_2 \geq 0$. Let $t \in \mathbb{Z}$ such that either
\begin{itemize}
    \item[(a)] $c_1 = 0$ and $t > \sqrt{3c_2 +1}-2$, or
    \item[(b)] $c_1 = -1$ and $t > \sqrt{3c_2 + \frac{1}{4}} - \frac{3}{2}$.
\end{itemize}
Then $H^0(\mathcal{E}(t)) \neq 0$.
\end{theorem}

\subsection{Pencils of cubics}

Now, assume $d_f = d_g = 2$. The Bourbaki degree of a sequence $\sigma$ in terms of $e = \indeg(\calt_\sigma)$ is given by the formula
$$
\Bour(\sigma) = e(e-4) + 12 - m(\sigma).
$$
When $e = 1$, $\Bour(\sigma) \geq 0$ gives $m(\sigma) \leq 9$. Applying \Cref{thm:hartshorne1988} to the sheaf $\mathcal{E} = \calt_\sigma(1)$, assuming $e > 1$, we get $c_1(\mathcal{E})= -2$ and the quantities of the Theorem become $A = 1$ and $B = 2$. Since $
c_2(\mathcal{E}) =9 - m(\sigma)$ direct application of the result shows the next proposition.

\begin{prop}\label[prop]{prop:bounds-c3-pencilcubics}
If $\sigma = (f,g)$ is a normal pencil of cubics and $e = \indeg(\calt_\sigma) > 1$, then $m(\sigma) \leq 8$. Furthermore:
\begin{itemize}
    \item[(a)] if $7 \leq m(\sigma) \leq 8$, then $c_3 \leq 16 - 2m(\sigma)$. In particular, when $m(\sigma) = 8$, $\sigma$ is locally free.
    \item[(b)] if $0 \leq m(\sigma) < 7$, then $
    c_3 \leq m(\sigma)^2 - 17m(\sigma) + 72$.
\end{itemize}
\end{prop}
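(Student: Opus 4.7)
My plan is to apply Theorem~$1.1$ of \cite{hartshorne1988stable} to the twist $\calt_\sigma(1)$, which is a rank $2$ reflexive sheaf on $\mathbb{P}^3$ with $c_1 = -2$, $c_2(\calt_\sigma(1)) = 9 - m(\sigma)$, and third Chern class still equal to $c_3(\calt_\sigma)$ (since $c_3$ is invariant under twisting for rank-$2$ sheaves). The hypothesis $e = \indeg(\calt_\sigma) > 1$ gives $H^0(\calt_\sigma(k)) = 0$ for every $k \leq 1$, and combined with the inclusion $\calt_\sigma \hookrightarrow \calo_{\mathbb{P}^3}^{\oplus 4}$ this forces every rank-one subsheaf of $\calt_\sigma(1)$ to have degree at most $-2$. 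Hence $\calt_\sigma(1)$ is $\mu$-semistable (and $\mu$-stable as soon as $e \geq 3$), putting it squarely inside the scope of Hartshorne's theorem.

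Next I would feed the Chern data of $\calt_\sigma(1)$ into the bound. In the range $c_2 \geq 3$, equivalently $m(\sigma) \leq 6$, Hartshorne's bound takes the form $c_3 \leq c_2(c_2 - 1)$, and substituting $c_2 = 9 - m(\sigma)$ yields
\[
c_3(\calt_\sigma) \leq (9-m(\sigma))(8-m(\sigma)) = m(\sigma)^2 - 17 m(\sigma) + 72,
\]
which is precisely (b). For the small-$c_2$ regime $c_2 \in \{1,2\}$, corresponding to $m(\sigma) \in \{7,8\}$, the sharper case of Hartshorne's result (reflecting the classification of stable rank-$2$ reflexive sheaves with minimal $c_2$, in the spirit of null-correlation bundles) yields $c_3 \leq 2(c_2 - 1) = 16 - 2 m(\sigma)$, giving (a). At the extreme value $m(\sigma) = 8$ this forces $c_3 \leq 0$, and combined with the standard nonnegativity $c_3(\calt_\sigma) \geq 0$ for a rank-$2$ reflexive sheaf on a smooth threefold one deduces $c_3 = 0$, so $\calt_\sigma$ is locally free.

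The step I expect to require the most care is the borderline case $e = 2$, where $\calt_\sigma(1)$ is only $\mu$-semistable rather than $\mu$-stable, so one must verify that Hartshorne's bound persists in that regime or replace it by a direct argument. The natural fallback is to use the Bourbaki sequence $0 \to \calo_{\mathbb{P}^3}(-2) \to \calt_\sigma \to \cali_B(-2) \to 0$ with $\deg(B) = \Bour(\sigma) = 8 - m(\sigma)$, combine it with the formula $c_3(\calt_\sigma) = 2 p_a(B) - 2 + 4 \deg(B)$ from \Cref{rmk:genusB-c3}, and apply the Castelnuovo-type genus bound $p_a(B) \leq \binom{\deg(B)-1}{2}$ valid for any pure one-dimensional subscheme of $\mathbb{P}^3$; a short calculation then reproduces the claimed inequality. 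The analogous checks for $e = 3$ and $e = 4$, which by \Cref{thmA}(a) are the only remaining possibilities, give either the same bound or a strictly tighter one, so every value of $e$ is covered.
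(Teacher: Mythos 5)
Your proposal follows the paper's own (implicit) argument exactly: the paper justifies this proposition simply by invoking \cite[Theorem 1.1]{hartshorne1988stable} for the twist $\calt_\sigma(1)$, whose Chern data $c_1 = -2$, $c_2 = 9 - m(\sigma)$ and twist-invariant $c_3$ you compute correctly, and your translation of the resulting bounds together with the criterion $c_3 \geq 0$ with equality iff locally free is precisely what yields both items. One small slip worth fixing: the hypothesis $e > 1$ only forces rank-one subsheaves of $\calt_\sigma(1)$ to have degree at most $-1$ (not $-2$), which is exactly $\mu$-semistability with respect to $\mu = -1$; degree at most $-2$ would assert $\mu$-stability, which fails for $e = 2$ (e.g.\ \Cref{ex:free-incompressible-m8}), though your stated conclusion (semistable in general, stable for $e \geq 3$) is the correct one and is all that Hartshorne's semistable bound requires.
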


Next, we apply \Cref{thm:hartshorne1982} to the sheaf $\mathcal{E} = \calt_\sigma(2)$, a sheaf with $c_1 = 0$ and $c_2(\mathcal{E}) = 8 - m(\sigma) \geq 0$, assuming $e > 1$. Then, for
$
t > \sqrt{3c_2 + 1}-2 = \sqrt{24 - 3m(\sigma) + 1}-2
$ we have $h^0(\mathcal{E}(t)) = h^0(\calt_\sigma(t+2)) \neq 0$. Direct application of this bound for high values of $m(\sigma) = 6,7,8$ gives the following result.

\begin{prop}\label[prop]{prop:indeg-bounds-pencilcubics}
Let $\sigma = (f,g)$ be a normal pencil of cubics. Then $e = \indeg(\calt_\sigma) \leq 4$. If $m(\sigma) = 6, 7$, $e \leq 3$, and if $m(\sigma) = 8$, $e \leq 2$.
\end{prop}

The following result gives a picture of the generic case of a pencil of cubics.

\begin{prop}\label[prop]{generic-pencil-of-cubics}
Let $\sigma = (f,g)$ be a general pencil of cubics in $\mathbb{P}^3$. Then $m(\sigma) = 0$, the number of singular members is $32$, and all singular members have one singular point, in particular $m(\sigma) = 0$. Moreover $c_3(\calt_\sigma) = 32$ (see \Cref{ex:pencils-cubics-genericpencil}), and the minimal free resolution for the logarithmic sheaf $\calt_\sigma$ is of the form
$$
0 \to \mathcal{O}_{\mathbb{P}^3}(-6)^{\oplus 2} \to \mathcal{O}_{\mathbb{P}^3}(-4)^{\oplus 4} \to \calt_\sigma \to 0,
$$
given by the Buchsbaum--Rim complex.
\end{prop}

\begin{proof}
This follows from intersection theory for the bundle of principal parts (see \cite{eisenbud20163264}, Proposition $7.1$ and Proposition $7.4$).
\end{proof}

Rewriting \Cref{thmA}, $(d)$ for pencils of cubics, we obtain:

\begin{prop}\label[prop]{prop:pencilcubics-compressible}
Let $\sigma$ be a normal pencil of cubics. Then $\sigma$ is compressible if and only if $m(\sigma) = 12$ (see \Cref{ex:compressible-pencilcubics}), and in this case $\calt_\sigma \simeq \mathcal{O}_{\mathbb{P}^3}\oplus \mathcal{O}_{\mathbb{P}^3}(-4)$.
\end{prop}

\begin{example}[Free, compressible pencil of cubics]\label[example]{ex:compressible-pencilcubics}
Consider the sequence $\sigma = (x_0^3 + x_1^3 + x_0x_1x_3, x_0x_1x_3)$. This sequence is independent of the variable $x_2$, with Jacobian matrix
$$
\nabla \sigma = \begin{pmatrix}
3x_0^2 + x_1x_3       & 3x_1^2+x_0x_3 &0 & x_0x_1\\
x_1x_3 & x_0x_3  &0 & x_0x_1
\end{pmatrix}
$$
so that there are two linearly independent syzygies, one of degree zero and one of degree four, in the following matrix:
$$
\nu = \begin{pmatrix}
0 & -x_0x_1^3\\
0 & x_0^3x_1\\
1 & 0 \\
0 & -x_0^3x_3 + x_1^3 x_3
\end{pmatrix}
$$
Here, $m(\sigma) = 12$, with the irreducible components of $(\Xi_\sigma)_1$ being three lines $V(x_0, x_3), V(x_1, x_3)$ and $V(x_0,x_1)$, where the last one has multiplicity $10$ and the other two are simple.
\end{example}

Using \Cref{thmB}, we obtain the following bounds for $\mu$-semistability of $\calt_\sigma$ in terms of $m(\sigma)$:

\begin{prop}\label[prop]{prop:class-unstable-pencilcubics}
Let $\sigma$ be a normal pencil of cubics. Then:
\begin{itemize}
    \item[(a)] If $m(\sigma) \leq 6$, then $\calt_\sigma$ is $\mu$-semistable;
    \item[(b)] If $m(\sigma) \leq 2$, then $\calt_\sigma$ is $\mu$-stable.
\end{itemize}
\end{prop}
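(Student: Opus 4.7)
The plan is to argue by the contrapositive, translating $\mu$-(semi)stability of the rank two reflexive sheaf $\calt_\sigma$ into a lower bound on $e = \indeg(\calt_\sigma)$, and then using the Bourbaki formula together with Theorems A and B to force $m(\sigma)$ to be large whenever $e$ is too small.

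First I would record the numerical dictionary. For a pencil of cubics we have $d_f = d_g = 2$ and $c_1(\calt_\sigma) = -d = -4$, so $\mu(\calt_\sigma) = -2$. Any line subbundle of $\calt_\sigma$ is of the form $\mathcal{O}_{\mathbb{P}^3}(-a)$ with $a = e$ for the maximal destabilizing line bundle, so $\calt_\sigma$ is $\mu$-semistable exactly when $e \geq 2$, and $\mu$-stable exactly when $e \geq 3$. The Bourbaki formula specializes to
\[
\Bour(\sigma) = e(e-4) + 12 - m(\sigma),
\]
so for each possible value of $e$ a lower bound on $\Bour(\sigma)$ translates into an upper bound on $m(\sigma)$, and vice versa.

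For part $(a)$, I would prove that $e \leq 1$ forces $m(\sigma) \geq 7$. If $e = 0$, Theorem A$(c)$ gives $m(\sigma) = m_0 = 12$. If $e = 1$, Theorem B$(a)$ gives $\Bour(\sigma) \leq 2$, so the displayed formula yields $m(\sigma) = 9 - \Bour(\sigma) \geq 7$. In either case $m(\sigma) > 6$, which is the contrapositive of $(a)$.

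For part $(b)$, I would argue that $e \leq 2$ forces $m(\sigma) \geq 3$. The case $e \leq 1$ is already handled by part $(a)$ (which gives $m(\sigma) \geq 7$). For $e = 2$, Theorem B$(b)$ gives $\Bour(\sigma) \leq 5$, and the formula yields $m(\sigma) = 8 - \Bour(\sigma) \geq 3$. Hence $m(\sigma) \leq 2$ rules out every $e \leq 2$, forcing $e \geq 3$ and thus $\mu$-stability.

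There is no real obstacle here beyond careful bookkeeping: the proof is a direct consequence of the Bourbaki formula combined with the two inputs already at our disposal, namely the compressibility characterization in Theorem A$(c)$ and the upper bounds on $\Bour(\sigma)$ when $e \in \{1,2\}$ supplied by Theorem B. The only point worth flagging is that the bounds in Theorem B are tight enough to give exactly the thresholds $6$ and $2$ in the statement; any improvement on the $e = 2$ case of Theorem B would automatically sharpen $(b)$.
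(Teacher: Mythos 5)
Your proposal is correct and follows essentially the same route as the paper: argue by contrapositive, use the specialization $\Bour(\sigma)=e(e-4)+12-m(\sigma)$ together with the bounds $\Bour(\sigma)\leq 2$ for $e=1$ and $\Bour(\sigma)\leq 5$ for $e=2$ from Theorem B, and dispose of $e=0$ via the compressibility characterization. The paper phrases the $e\leq 1$ step by first citing the classification results for $m(\sigma)=12,9,8$ to rule out compressible and free cases, but the arithmetic is identical to yours.
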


\begin{proof}
Since $\mu(\calt_\sigma) = -2$, we show for $m(\sigma) \leq 6$ that $e = \indeg(\calt_\sigma) \geq 2$. Since $m(\sigma) \leq 6$, we conclude that $\sigma$ is neither compressible nor free, from the previous results \Cref{prop:pencilcubics-compressible}, \Cref{prop:class-pencilcubics-m9} and \Cref{prop:class-pencilcubics-m8}. Let us suppose that $e = \indeg(\calt_\sigma) = 1$. Since $\sigma$ is not free, we can apply the result \Cref{thmB}, $(a)$, and conclude $\Bour(\sigma) \leq 2$. On the other hand, we have
\begin{align*}
\Bour(\sigma) &= 1 - d + d_f^2 + d_g^2 + d_f d_g - m(\sigma)\\
&= 1 - 4 + 4 + 4 + 4 - m(\sigma)\\
&= 9 - m(\sigma) > 2,
\end{align*}
since $m(\sigma) \leq 6$. This contradicts the bound $\Bour(\sigma) \leq 2$ established earlier.

Moreover, for $(b)$, if we assume $m(\sigma) \leq 2$ and $e = \indeg(\calt_\sigma) = 2$, then
\begin{align*}
\Bour(\sigma) &= 2 - 2d + d_f^2 + d_g^2 + d_f d_g - m(\sigma)\\
&= 4 - 8 + 12 - m(\sigma)\\
&= 8 - m(\sigma) > 5
\end{align*}
if $m(\sigma) \leq 2$, we got a contradiction with the bound $\Bour(\sigma) \leq 5$.
\end{proof}

\begin{prop}\label[prop]{prop:class-pencilcubics-m9}
Let $\sigma = (f,g)$ be an incompressible normal pencil of cubics. Then $m(\sigma) \leq 9$, and $m(\sigma) = 9$ if and only if $\calt_\sigma \simeq \mathcal{O}_{\mathbb{P}^3}(-1) \oplus \mathcal{O}_{\mathbb{P}^3}(-3)$ (see \Cref{ex:free-incompressible-m9}).
\end{prop}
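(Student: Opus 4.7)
My plan is to combine the Bogomolov inequality for $\mu$-semistable rank two reflexive sheaves on $\mathbb{P}^3$ with the Bourbaki degree formula. From $c_1(\calt_\sigma) = -d = -4$ and the relation $c_2(\calt_\sigma) = 12 - m(\sigma)$ established in the proof of \Cref{prop:Bour-degree-formula}, large values of $m(\sigma)$ force $\mu$-instability of $\calt_\sigma$. Under the incompressibility hypothesis the only possible destabilizing line subbundle has degree $-1$, which pins $\indeg(\calt_\sigma) = 1$, and the Bourbaki formula then forces a contradiction unless $m(\sigma) \leq 9$.

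I first prove the bound $m(\sigma) \leq 9$ by contradiction. Assume $m(\sigma) \geq 10$, so $c_2(\calt_\sigma) \leq 2$ and hence $c_1(\calt_\sigma)^2 = 16 > 4\,c_2(\calt_\sigma)$. By Bogomolov's inequality (applied via restriction to a general plane), $\calt_\sigma$ is strictly $\mu$-unstable; a saturated destabilizing rank one subsheaf is a line bundle $\mathcal{O}_{\mathbb{P}^3}(-a) \hookrightarrow \calt_\sigma$ with $-a > \mu(\calt_\sigma) = -2$, i.e.\ $a \leq 1$. Incompressibility together with \Cref{thmA}(c) gives $\indeg(\calt_\sigma) \geq 1$, while the above inclusion shows $\indeg(\calt_\sigma) \leq a \leq 1$; therefore $e := \indeg(\calt_\sigma) = 1$. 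Substituting into the Bourbaki formula,
$$
\Bour(\sigma) = 1\cdot(1-4) + 12 - m(\sigma) = 9 - m(\sigma),
$$
and $\Bour(\sigma) \geq 0$ forces $m(\sigma) \leq 9$, contradicting the assumption.

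For the characterization of the extreme case $m(\sigma) = 9$, I note that $c_2(\calt_\sigma) = 3 < 4$ still violates Bogomolov, so the same argument gives $e = 1$. Then $\Bour(\sigma) = 9 - 9 = 0$, so $\sigma$ is free by \Cref{rmk:Bour-free-seq}, and I may write $\calt_\sigma \simeq \mathcal{O}_{\mathbb{P}^3}(-e_1) \oplus \mathcal{O}_{\mathbb{P}^3}(-e_2)$ with $e_1 \leq e_2$. The constraints $e_1 = \indeg(\calt_\sigma) = 1$ and $e_1 + e_2 = d = 4$ uniquely give $e_2 = 3$, yielding the claimed splitting. The converse is immediate: for $\calt_\sigma \simeq \mathcal{O}_{\mathbb{P}^3}(-1) \oplus \mathcal{O}_{\mathbb{P}^3}(-3)$ one computes $c_2 = 3$, so $m(\sigma) = 12 - 3 = 9$; and $\indeg(\calt_\sigma) = 1 > 0$ shows $\sigma$ is incompressible.

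The main (and essentially only) non-mechanical step is the invocation of Bogomolov's inequality $c_1^2 \leq 4c_2$ for rank two $\mu$-semistable reflexive sheaves on $\mathbb{P}^3$; from there, every other assertion is a direct consequence of \Cref{thmA}, the Bourbaki degree formula, or an arithmetic check of Chern classes.
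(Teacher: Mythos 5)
Your proof is correct. The paper reaches the same conclusion by a different packaging of essentially the same input: it first records (\Cref{prop:bounds-m-pencilcubics}, obtained by applying Hartshorne's bound for semistable reflexive sheaves to the twist $\calt_\sigma(1)$) that $e = \indeg(\calt_\sigma) > 1$ forces $m(\sigma) \leq 8$, and then treats $e = 1$ directly via $\Bour(\sigma) = 9 - m(\sigma) \geq 0$, with incompressibility excluding $e = 0$. You instead split by stability rather than by $e$: Bogomolov's inequality $c_1^2 \leq 4c_2$ shows that $m(\sigma) \geq 9$ forces $\mu$-instability, the saturated destabilizing line subbundle $\mathcal{O}_{\mathbb{P}^3}(-a)$ with $a \leq 1$ combined with incompressibility pins down $e = 1$, and the Bourbaki formula closes the argument. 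Both routes ultimately rest on the same fact --- a $\mu$-semistable normalized rank-two reflexive sheaf on $\mathbb{P}^3$ has non-negative $c_2$ --- so the difference is mostly organizational; your version is self-contained modulo one classical inequality and makes the mechanism (large $m$ destabilizes $\calt_\sigma$, and the destabilizer is the minimal syzygy) more transparent, while the paper's version reuses a proposition it needs elsewhere. The determination of the splitting type $\mathcal{O}_{\mathbb{P}^3}(-1) \oplus \mathcal{O}_{\mathbb{P}^3}(-3)$ in the extremal case and the converse computation are handled identically in both.
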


\begin{proof}
The bound $m(\sigma) \leq 8$ is obtained for $e = \indeg(\calt_\sigma) > 1$ in \Cref{prop:bounds-c3-pencilcubics} for pencils of cubics, and $m(\sigma) \leq 9$ holds for $e \geq 1$, thus $m(\sigma) = 9$ only if $e = 1$. From the formula of the Bourbaki degree, we obtain $\Bour(\sigma) = 0$ in this case, and thus $\calt_\sigma \simeq \mathcal{O}_{\mathbb{P}^3}(-1) \oplus \mathcal{O}_{\mathbb{P}^3}(-3)$. On the other hand, if $\calt_\sigma \simeq \mathcal{O}_{\mathbb{P}^3}(-1) \oplus \mathcal{O}_{\mathbb{P}^3}(-3)$, then $e = 1$, and the equation $\Bour(\sigma) = 0$ implies $m(\sigma)=9$.
\end{proof}

\begin{example}[Free, unstable and incompressible pencil of cubics]\label[example]{ex:free-incompressible-m9}
Consider the sequence
$\sigma = (x_1(x_2^2 - x_1^2), x_3x_2(x_0 - x_1))$. Then the matrix $\nabla \sigma$ is given by:
$$
\nabla \sigma = \begin{pmatrix}
0       & -3x_1^2+x_2^2 &2x_1x_2 & 0\\
x_2x_3 & -x_2x_3  &x_3(x_0 - x_1) & x_2(x_0 - x_1)
\end{pmatrix}
$$
and it admits two linearly independent syzygies, one of degree one and one of degree $3$:
$$
\nu \doteq \begin{pmatrix}
x_0 - x_1         & 2x_1x_2^2 \\
0         & 2x_1x_2^2 \\
0         & 3x_1^2x_2 - x_2^3 \\
-x_3      & -3x_1^2x_3 + x_2^2x_3 
\end{pmatrix}
$$
Thus, we conclude $\calt_\sigma \simeq \mathcal{O}_{\mathbb{P}^3}(-1) \oplus \mathcal{O}_{\mathbb{P}^3}(-3)$, and in particular $e = \indeg(\calt_\sigma) = 1$. In this case, $m(\sigma) = 9$.
\end{example}

\begin{prop}\label[prop]{prop:class-pencilcubics-m8}
Let $\sigma = (f,g)$ be a normal pencil of cubics. Then $\calt_\sigma \simeq \mathcal{O}_{\mathbb{P}^3}(-2) \oplus \mathcal{O}_{\mathbb{P}^3}(-2)$ if and only if $m(\sigma) = 8$ (see \Cref{ex:free-incompressible-m8}).
\end{prop}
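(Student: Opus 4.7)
The forward implication is immediate: if $\calt_\sigma \simeq \mathcal{O}_{\mathbb{P}^3}(-2)^{\oplus 2}$, then $\sigma$ is free by \Cref{rmk:Bour-free-seq}, so $\Bour(\sigma) = 0$ and $e = \indeg(\calt_\sigma) = 2$. Plugging $e = 2$, $d = 4$, and $m_0 = 12$ into the formula of \Cref{def:Bour} forces $m(\sigma) = 8$.

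For the converse, assume $m(\sigma) = 8$. By \Cref{thmA}(c), since $m(\sigma) \neq 12$, the sequence $\sigma$ is incompressible, hence $e \geq 1$; and by \Cref{prop:indeg-bounds-pencilcubics}, $m(\sigma) = 8$ forces $e \leq 2$. So $e \in \{1, 2\}$. In the case $e = 2$, the Bourbaki formula yields $\Bour(\sigma) = 2(2-4) + 12 - 8 = 0$, so $\sigma$ is free. Writing $\calt_\sigma \simeq \mathcal{O}_{\mathbb{P}^3}(-a) \oplus \mathcal{O}_{\mathbb{P}^3}(-b)$ with $0 \leq a \leq b$, the constraints $a + b = -c_1(\calt_\sigma) = 4$ and $a = \indeg(\calt_\sigma) = 2$ force $a = b = 2$, giving the desired splitting.

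The main obstacle is to exclude the case $e = 1$. Here $\Bour(\sigma) = 1$, so $\sigma$ would be nearly-free, and \Cref{prop:nearly-free-seq}(a) would give the minimal free resolution
$$
0 \to \mathcal{O}_{\mathbb{P}^3}(-5) \to \mathcal{O}_{\mathbb{P}^3}(-4)^{\oplus 2} \oplus \mathcal{O}_{\mathbb{P}^3}(-1) \to \calt_\sigma \to 0,
$$
from which a direct Chern-class computation---or \Cref{rmk:genusB-c3} with $\deg(B) = 1$, $p_a(B) = 0$, $d = 4$, $e = 1$---yields $c_3(\calt_\sigma) = 4$. The plan is then to combine this value with a Hartshorne--Sauer-type bound on $c_3$ for the $\mu$-unstable rank-two reflexive sheaf $\calt_\sigma(1)$, in the spirit of the argument behind \Cref{prop:bounds-c3-pencilcubics}, to force $c_3 = 0$ and obtain a contradiction. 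Since the statement of \Cref{prop:bounds-c3-pencilcubics}(a) is formulated under the hypothesis $e > 1$, the key technical point---and the substantive geometric content of this direction---is to adapt its conclusion to the $\mu$-unstable setting $e = 1$, equivalently to show that a normal pencil of cubics cannot simultaneously admit a linear Jacobian syzygy and satisfy $m(\sigma) = 8$.
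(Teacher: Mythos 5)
Your forward implication and your treatment of the case $e=2$ are correct and essentially the paper's argument: the paper also reduces $m(\sigma)=8$ to $e\le 2$ via \Cref{prop:indeg-bounds-pencilcubics} and then splits into $e=2$ and $e=1$. (For $e=2$ the paper invokes \cite[Lemma 2.0(a)]{chang1984stable} applied to the strictly $\mu$-semistable sheaf $\calt_\sigma(2)$ with $c_1=c_2=0$, whereas you use $\Bour(\sigma)=0$ together with $c_1=-4$ and $\indeg=2$ to pin down the splitting type; both are fine, and yours is arguably more elementary.)

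The genuine gap is the exclusion of $e=1$, which you explicitly leave as a ``plan'' rather than an argument. You correctly compute, via \Cref{rmk:genusB-c3} with $\deg(B)=1$, $p_a(B)=0$, that this case forces $c_3(\calt_\sigma)=4$, but you never produce the competing upper bound that contradicts it; without that, the proof is incomplete. For what it is worth, the paper closes this case by quoting \Cref{prop:bounds-c3-pencilcubics}(a), which gives $c_3\le 16-2m(\sigma)=0$ --- and you have put your finger on a real subtlety there, since that proposition is stated under the hypothesis $e>1$ (it rests on Hartshorne's bound for the $\mu$-semistable twist, which fails to apply verbatim when $e=1$). So your instinct that the substantive content of this direction is precisely a $c_3$-bound valid in the unstable regime is right, but you must actually supply it: the natural substitute is the bound for unstable rank-two reflexive sheaves from \cite[Theorem 3.8]{sauer1984nonstable}, applied to $\calt_\sigma(2)$ (unstable of order $r=1$, with $c_1=0$ and $c_2=c_2(\calt_\sigma)+2c_1(\calt_\sigma)+4=0$), exactly as the paper does in \Cref{extremal-indeg-rmk-pencilcubics-m7} for the neighbouring case $m(\sigma)=7$. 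A Chern-class computation from the nearly-free resolution of \Cref{prop:nearly-free-seq} will not do the job by itself: it reproduces $c_3=4$ consistently and yields no contradiction, so the obstruction must come from such a positivity/boundedness result for reflexive sheaves, not from the resolution.
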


\begin{proof}
If $m(\sigma) = 8$, by \Cref{prop:indeg-bounds-pencilcubics}, then $e = \indeg(\calt_\sigma) \leq 2$, and by \Cref{prop:bounds-c3-pencilcubics} it follows that $\calt_\sigma$ is locally free. If $e = 1$, then $\Bour(\sigma) = 1$, but by \Cref{prop:nearly-free-not-loc-free} nearly-free sequences are never locally free, so this gives a contradiction. Since $\sigma$ is incompressible, it follows that $e = 2$, and for $\Bour(\sigma) = 0$ we obtain $\calt_\sigma \simeq \mathcal{O}_{\mathbb{P}^3}(-2)^{\oplus 2}$. Conversely, if $\calt_\sigma \simeq \mathcal{O}_{\mathbb{P}^3}(-2) \oplus \mathcal{O}_{\mathbb{P}^3}(-2)$, then $e = \indeg(\calt_\sigma) = 2$ and $\Bour(\sigma)= 0$ gives $m(\sigma) = 8$.
\end{proof}

\begin{example}[Free, incompressible and $\mu$-semistable pencil of cubics ($m(\sigma) = 8$)]\label[example]{ex:free-incompressible-m8}
Consider the sequence
$\sigma = (x_0^2x_1 + x_3^3, x_0^3+x_0x_2x_3 + x_3^3)$. The Jacobian matrix $\nabla \sigma$ is given by:
$$
\nabla \sigma = \begin{pmatrix}
2x_0x_1       & x_0^2 &   0 & 3x_3^2\\
3x_0^2+x_2x_3 & 0  & x_0x_3&x_0x_2+3x_2^2
\end{pmatrix}
$$
and it admits two linearly independent syzygies of degree $2$:
$$
\nu \doteq \begin{pmatrix}
-x_0x_3         & -x_0x_2 \\
2x_1x_3         & 2x_1x_2-9x_3^2 \\
3x_0^2 + x_2x_3 & x_2^2 - 9x_0x_3 \\
0               & 3x_0^2
\end{pmatrix}
$$
Thus, we conclude $\calt_\sigma \simeq \mathcal{O}_{\mathbb{P}^3}(-2) \oplus \mathcal{O}_{\mathbb{P}^3}(-2)$.
\end{example}

\begin{prop}\label[prop]{prop:class-pencilcubics-nf}
Let $\sigma= (f,g)$ be a nearly-free pencil of cubics. Then, the only possible discrete invariants are $e = \indeg(\calt_\sigma) = 2$, $m(\sigma) = 7$ and $c_3(\calt_\sigma) = 2$ (see \Cref{ex:nearly-free-cubics}).
\end{prop}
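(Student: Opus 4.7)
The plan is to combine the Bourbaki degree formula with the enumeration of possible values of $e = \indeg(\calt_\sigma)$, then use the previous classification results to eliminate all cases except $e=2$.

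First I would write out the formula $\Bour(\sigma) = e(e-4) + 12 - m(\sigma)$ specialized to $d_f = d_g = 2$. Since $\sigma$ is nearly free, $\Bour(\sigma) = 1$, which gives $m(\sigma) = e^2 - 4e + 11$. By \Cref{thmA} we have $0 \leq e \leq d = 4$, so there are only five candidate pairs $(e, m(\sigma))$, namely $(0,11), (1,8), (2,7), (3,8), (4,11)$.

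Next I would eliminate the four outer cases one at a time. The case $e=0$ is impossible because by \Cref{thmA}(c) this forces $\sigma$ compressible and hence free, contradicting $\Bour(\sigma)=1$; moreover $m(\sigma)=11$ already contradicts \Cref{prop:pencilcubics-compressible}. The cases $e=1$ and $e=3$ both yield $m(\sigma)=8$, which by \Cref{prop:class-pencilcubics-m8} forces $\calt_\sigma \simeq \mathcal{O}_{\mathbb{P}^3}(-2)^{\oplus 2}$; this contradicts nearly-freeness. Finally, $e=4$ gives $m(\sigma)=11$: if $\sigma$ were compressible then $m(\sigma)=12$ by \Cref{prop:pencilcubics-compressible}, while if $\sigma$ is incompressible then $m(\sigma) \leq 9$ by \Cref{prop:class-pencilcubics-m9}, so $m(\sigma) = 11$ is impossible in either case.

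This leaves only $e = 2$ and $m(\sigma) = 7$. To compute $c_3(\calt_\sigma)$, I would use \Cref{prop:nearly-free-seq}: since $\sigma$ is nearly free, the Bourbaki scheme $B = B_\nu$ associated to the minimal syzygy $\nu \in H^0(\calt_\sigma(e))$ is a line in $\mathbb{P}^3$, so $\deg(B) = 1$ and $p_a(B) = 0$. Applying the formula in \Cref{rmk:genusB-c3} with $d = 4$ and $e = 2$,
$$
c_3(\calt_\sigma) = 2 p_a(B) - 2 + \deg(B)(4 + d - 2e) = -2 + (4 + 4 - 4) = 2,
$$
as claimed.

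The main obstacle is nothing computational but rather bookkeeping: one must confirm that every $e \in \{0,1,3,4\}$ is genuinely excluded by the previously established structural results, and in particular that the bound $m(\sigma) \leq 9$ for incompressible pencils together with the rigid value $m(\sigma) = 12$ in the compressible case really does close the gap at $e=4$. Once those propositions are invoked, the argument reduces to substituting into the Bourbaki formula and the Hartshorne reflexive-sheaf formula for $c_3$.
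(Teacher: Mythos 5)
Your proposal is correct and follows essentially the same route as the paper: both arguments rest on the Bourbaki degree formula $\Bour(\sigma)=e(e-4)+12-m(\sigma)=1$, the classification propositions for $m(\sigma)=8,9,12$, and the $c_3$ formula of \Cref{rmk:genusB-c3} applied to the line $B$. The only cosmetic difference is that you organize the case elimination by the value of $e$ (via $m(\sigma)=e^2-4e+11$), whereas the paper pins down $m(\sigma)=7$ directly by sandwiching it between the bound $m(\sigma)\leq 7$ and the estimate $m(\sigma)\leq 6 \Rightarrow \Bour(\sigma)\geq 2$; the two bookkeeping schemes are equivalent.
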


\begin{proof}
By the previous results, $m(\sigma) \leq 7$. But, if $m(\sigma) \leq 6$, we get $\Bour(\sigma) \geq 2$, so $\sigma$ is not nearly free. For $m(\sigma) = 7$, any value of $e \neq 2$ gives $\Bour(\sigma) \neq 1$. Thus, it follows that $e = 2$. Moreover, $p_a(B) = 0$ and $\deg(B) = 1$ imply $c_2(\calt_\sigma) = 3$.
\end{proof}

\begin{example}[Nearly-free pencil of Cubics]\label[example]{ex:nearly-free-cubics}
We consider the following sequence of cubics:
$$
\sigma = (x_0^2(x_1-x_2) + x_2^2(x_1 - x_0 + x_3), -x_1x_2x_3 + x_2^2x_3)
$$
with corresponding Jacobian matrix given by:
$$
\nabla \sigma = 
\begin{pmatrix}
2x_0(x_1-x_2)-x_2^2 & x_0^2 +x_2^2 & -x_0^2+2x_2(x_1-x_0+x_3) & x_2^2\\
0 & -x_2x_3 & x_3(2x_2-x_1) & x_2(x_2-x_1)
\end{pmatrix}.
$$
There are four irreducible components in $(\Xi_\sigma)_1$, all lines, corresponding to the four prime ideals below:
\begin{align*}
\fp_1 &= (x_2, x_3)\\
\fp_2 &= (x_1 - x_2, x_3)\\
\fp_3 &= (x_1, x_2)\\
\fp_4 &= (x_0, x_2)
\end{align*}
For the prime $\fp_1$, we may consider the element $u = (2x_0(x_1-x_2) - x_2^2)^{-1} \in R_{\fp_1}$ and an elementary operation to obtain the matrix
$$
(\nabla \sigma)_{\fp_1} \sim \begin{pmatrix}
1 & u(x_0^2 + x_2^2) & u(-x_0^2 + 2x_2(x_1 - x_0+x_3)) & u x_2^2\\
0 & -x_2x_3 & x_3(2x_2-x_1) & x_2(x_2 - x_1)
\end{pmatrix},
$$
which sends the first basis vector to $(1,0) \in R_{\fp_1}^2$ and therefore we may compute the cokernel by considering the second line of the remaining matrix, namely
\begin{align*}
(\mathcal{Q}_{\sigma})_{\fp_1} &\simeq \frac{R_{\fp_1}}{(-x_2x_3, x_3(2x_2-x_1), x_2(x_2-x_1))}\\
&\simeq \frac{R_{\fp_1}}{(-x_2x_3, x_3, x_2)} \simeq \frac{R_{\fp_1}}{\fp_1R_{\fp_1}} \simeq \kappa,
\end{align*}
so that $\length(\mathcal{Q}_\sigma)_{\fp_1} = 1$.

For the prime $\fp_2 = (x_1 - x_2, x_3)$, the same element $u \in R_{\fp_2}$ is invertible, so that we compute the cokernel analogously to obtain:
\begin{align*}
(\mathcal{Q}_{\sigma})_{\fp_2} &\simeq \frac{R_{\fp_2}}{(-x_2x_3, x_3(2x_2-x_1), x_2(x_2-x_1))}\\
&\simeq \frac{R_{\fp_2}}{(x_3, x_1 - x_2)} \simeq \frac{R_{\fp_2}}{\fp_2R_{\fp_2}} \simeq \kappa,
\end{align*}
so that $\length(\mathcal{Q}_\sigma)_{\fp_2} = 1$.

For the prime $\fp_3 = (x_1, x_2)$, we take $u = (x_0^2 + x_2^2)^{-1} \in R_{\fp_3}^\times$ to perform elementary operations and obtain
$$
(\nabla \sigma)_{\fp_3} \sim \begin{pmatrix}
u(2x_0(x_1-x_2)-x_2^2) & 1 & u(-x_0^2 + 2x_2(x_1-x_0+x_3)) & ux_2^2\\
0 & 0 & x_3(2x_2-x_1)-ux_2x_3(-x_0^2+2x_2(x_1-x_0+x_3)) & x_2(x_2-x_1)-ux_2^3x_3
\end{pmatrix},
$$
which sends $e_2 \in R_{\fp_3}^4$ to $(1,0) \in R_{\fp_3}^2$. Thus, we may compute the cokernel by considering the remaining columns of the second row:
$$
(\mathcal{Q}_\sigma)_{\fp_3} \simeq \frac{R_{\fp_3}}{(f_1, f_2)}
$$
where
\begin{align*}
f_1 &= x_3(2x_2-x_1)+ux_2x_3(-x_0^2+2x_2(x_1-x_0+x_3))\\
f_2 &= x_2(x_2-x_1)+ux_2^3x_3.
\end{align*}
Since $f_1 \in I$, we obtain
$$
2x_2(1+ux_2(x_1-x_0+x_3)-ux_0^2) \equiv x_1 \mod I.
$$
Substituting this into $f_2 \in I$, we get:
$$
0 \equiv x_2^2\left(1- (2(1+ux_2(x_1-x_0+x_3))-ux_0^2)+ux_2x_3 \right) \mod I,
$$
and the element multiplying $x_2^2$ is an unit in $R_{\fp_3}$, we conclude that $x_2^2 \in I$. Hence $x_1^2 \in I$ by $f_1 \in I$, and we may rewrite $f_1$ and $f_2$ mod $I$ as:
\begin{align*}
f_1 &= x_2(2-ux_0^2)-x_1 \mod I \\
f_2 &= -x_1 x_2 \mod I,
\end{align*}
and thus the ideal $I = (x_1^2, x_2^2, x_1 x_2, x_2v - x_1)$ where $v \in R_{\fp_3}$ is a unit. Hence, one may write
$$
(\mathcal{Q}_\sigma)_{\fp_3} \simeq \kappa\langle 1, x_1, x_2 \rangle,
$$
so that $\length(\mathcal{Q}_\sigma)_{\fp_3} = 3$.

For the prime $\fp_4 = (x_0, x_2)$, we take $u = (x_3(2x_2 - x_1))^{-1} \in R_{\fp_4}$ and perform elementary matrix operations to obtain:
$$
(\nabla \sigma)_{\fp_4} \sim \begin{pmatrix}
2x_0(x_1-x_2)-x_2^2 & (x_0^2 + x_2^2)+uQx_2x_3 & 0 & x_2^2 - uQx_2(x_2-x_1)\\
0 & -ux_2x_3 & 1 & ux_2(x_2-x_1)
\end{pmatrix},
$$
sending $e_3 \in R_{\fp_4}^4$ to $(0,1) \in R_{\fp_4}^2$. Hence, we compute the cokernel as:
$$
(\mathcal{Q}_{\sigma})_{\fp_4} \simeq \frac{R_{\fp_4}}{(f_1, f_2, f_3)} \doteq \frac{R_{\fp_4}}{I}
$$
with
\begin{align*}
f_1 &= 2x_0x_1 - x_2(2x_0+x_2) = 2x_0(x_1 - x_2) - x_2^2\\
f_2 &= (x_0^2 + x_2^2) + uQx_2 x_3\\
f_3 &= x_2^2 - uQx_2(x_2 - x_1)\\
u   &= (x_3(2x_2-x_1))^{-1}\\
Q   &= -x_0^2 + 2x_2(x_1 - x_0+x_3).
\end{align*}
Thus,
\begin{align*}
f_2 &= x_0^2 + x_2^2 - ux_3 x_0^2x_2 + 2u(x_1 - x_0 + x_3)x_3x_2^2\\
&= x_0^2(1 - ux_3x_2) + x_2^2(1+2u(x_1-x_0+x_3)x_3)\\
f_3 &= x_2^2 + u(x_2-x_1) x_0^2x_2 - 2u(x_2-x_1)(x_1 - x_0+x_3)x_2^2,
\end{align*}
where the elements $(x_2 - x_1), (x_1-x_0+x_3), x_3 \in R_{\fp_4}^\times$. Since $f_1 \in I$, we have $
x_2^2 \equiv 2x_0(x_1-x_2) \mod I$, and thus
$$
f_2 \equiv x_0( x_0(1 - ux_3 x_2) + 2(x_1-x_2) + 2u(x_1-x_2)(x_1-x_0+x_3)x_3) \mod I,
$$
but since the element inside the parenthesis is invertible in $R_{\fp_4}$, we conclude that $x_0 \in I$. From $f_1 \in I$, we obtain that $x_2^2 \in I$. Taking this into account, we conclude that
\begin{align*}
f_1 &\equiv -x_2^2 \mod I\\
f_2 &\equiv x_2^2 \mod I\\
f_3 &\equiv x_2^2(1-2ux_2(x_1+x_3)+2ux_1(x_1+x_3)) \equiv x_2^2 \mod I,
\end{align*}
and therefore
$$
(\mathcal{Q}_\sigma)_{\fp_4} \simeq \frac{R_{\fp_4}}{(x_0, x_2^2)} \simeq \kappa\langle 1, x_2\rangle,
$$
with $\length(\mathcal{Q}_\sigma)_{\fp_4} = 2$. Using the associativity formula to compute $m(\sigma)$, we obtain:
$$
m(\sigma) = 1 \cdot 1 + 1 \cdot 1 + 3 \cdot 1 + 2 \cdot 1 = 7.
$$
On the other hand, $e = 2$, so using the Bourbaki formula we obtain $\Bour(\sigma) = 1$, and conclude also the minimal free resolution for $\calt_\sigma$ of the form
$$
0 \to \mathcal{O}_{\mathbb{P}^3}(-4) \to \mathcal{O}_{\mathbb{P}^3}(-2) \oplus \mathcal{O}_{\mathbb{P}^3}(-3)^{\oplus 2} \to \calt_\sigma \to 0.
$$
\end{example}

\begin{prop}\label[prop]{prop:class-pencilcubics-m7}
Let $\sigma = (f,g)$ be a normal, $\mu$-semistable pencil of cubics such that $m(\sigma) = 7$. Then, the only possible cases are:
\begin{itemize}
    \item $e = 2$ and $\sigma$ is a nearly free sequence with $c_3(\calt_\sigma) = 2$ (see \Cref{prop:class-pencilcubics-nf});
    \item $e = 3$ and $\sigma$ is locally free, $\Bour(\sigma) = 2$ and $B$ is a pair of skew lines (or their degeneration) (see \Cref{ex:pencil-of-cubics-bour2}).
\end{itemize}
\end{prop}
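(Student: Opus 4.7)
The plan is to enumerate the possible values of $e = \indeg(\calt_\sigma)$ consistent with both $m(\sigma) = 7$ and $\mu$-semistability of $\calt_\sigma$, then analyze the surviving cases. By \Cref{thmA}, $(c)$, the value $e = 0$ forces $m(\sigma) = 12$, which is excluded. If $e = 1$, the inclusion $\mathcal{O}_{\mathbb{P}^3}(-1) \hookrightarrow \calt_\sigma$ destabilizes, since $-1 > -2 = \mu(\calt_\sigma)$. \Cref{prop:indeg-bounds-pencilcubics} gives $e \leq 3$ when $m(\sigma) = 7$. Hence $e \in \{2, 3\}$. For $e = 2$, the Bourbaki formula yields $\Bour(\sigma) = 1$, so $\sigma$ is nearly free, and the invariant $c_3(\calt_\sigma) = 2$ follows from \Cref{prop:class-pencilcubics-nf}.

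For $e = 3$, the Bourbaki formula gives $\Bour(\sigma) = 2$, so $B$ is a curve of degree two. By \Cref{thm:mult-2-lines} write $p_a(B) = -1-a$ with $a \geq -1$, and apply \Cref{rmk:genusB-c3} to obtain
$$c_3(\calt_\sigma) = 2p_a(B) - 2 + \deg(B)(4 + d - 2e) = -2a.$$
Since $c_3 \geq 0$ for a rank two reflexive sheaf on $\mathbb{P}^3$ (with equality if and only if it is locally free), one has $a \in \{-1, 0\}$. The decisive step is ruling out $a = -1$: in that case $B$ is planar, hence a complete intersection of a plane and a quadric, with minimal free resolution
$$0 \to \mathcal{O}_{\mathbb{P}^3}(-3) \to \mathcal{O}_{\mathbb{P}^3}(-1) \oplus \mathcal{O}_{\mathbb{P}^3}(-2) \to \mathcal{I}_B \to 0.$$
Lifting via \Cref{prop:lifting-resolutions}, $(a)$ with $e-d = -1$ produces a surjection $\mathcal{O}_{\mathbb{P}^3}(-2) \oplus \mathcal{O}_{\mathbb{P}^3}(-3)^{\oplus 2} \twoheadrightarrow \calt_\sigma$. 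The restriction to the $\mathcal{O}_{\mathbb{P}^3}(-2)$ summand would then be a nonzero section of $\calt_\sigma(2)$, contradicting $e = 3$. Hence $a = 0$, forcing $c_3(\calt_\sigma) = 0$ (so $\calt_\sigma$ is locally free), and \Cref{thm:mult-2-lines} identifies $B$ as either a pair of skew lines or a multiplicity two structure on a line with $p_a(B) = -1$.

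The main obstacle is precisely the exclusion of the planar case when $e = 3$: one must verify that the $\mathcal{O}_{\mathbb{P}^3}(-2)$ summand in the lifted resolution genuinely yields a section of $\calt_\sigma(2)$. This is secured by the construction in \Cref{prop:lifting-resolutions}: the image of that summand under the projection $\pi: \calt_\sigma \to \mathcal{I}_B(-1)$ is the linear form cutting out the plane containing $B$, which is a nonzero global section of $\mathcal{I}_B(-1)$, so the lift itself cannot be zero. Together with the non-negativity of $c_3$ for rank two reflexive sheaves, this is the pivotal ingredient separating the two Bourbaki schemes allowed for $e = 3$ from the forbidden planar ones.
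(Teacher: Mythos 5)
Your proof is correct, but it takes a genuinely different route from the paper's. The paper passes to the normalized sheaf $\calt_\sigma(2)$, which has $c_1 = 0$ and $c_2 = 1$, and invokes Chang's classification of $\mu$-semistable rank-two reflexive sheaves with these invariants to conclude $c_3 \in \{0,2\}$ outright: the strictly semistable case $c_3 = 2$ gives $e = 2$ and the nearly free situation, while the stable case $c_3 = 0$ identifies $\calt_\sigma(2)$ as a null correlation bundle, whose standard extension exhibits $B$ as a pair of skew lines or a degeneration and forces $e = 3$. You instead stay entirely inside the machinery already developed in the paper: after reducing to $e \in \{2,3\}$ (your exclusion of $e = 1$ by the destabilizing sub-line-bundle of slope $-1 > -2$ is the right observation, and \Cref{prop:indeg-bounds-pencilcubics} caps $e$ at $3$), you analyze the Bourbaki scheme directly via \Cref{thm:mult-2-lines} and the formula $c_3(\calt_\sigma) = -2a$ from \Cref{rmk:genusB-c3}, use non-negativity of $c_3$ for rank-two reflexive sheaves to get $a \in \{-1,0\}$, and exclude the planar conic $a = -1$ by lifting its Koszul resolution through \Cref{prop:lifting-resolutions} to manufacture a nonzero section of $\calt_\sigma(2)$, contradicting $e = 3$. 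That exclusion step is sound — the $\mathcal{O}_{\mathbb{P}^3}(-2)$ summand maps onto the linear form in $H^0(\mathcal{I}_B(1))$, so its lift to $\calt_\sigma$ cannot vanish (one can see the same thing even more directly from $h^0(\calt_\sigma(2)) = h^0(\mathcal{I}_B(1))$, since $H^1(\mathcal{O}_{\mathbb{P}^3}(-1)) = 0$). Your version is more self-contained, relying only on results internal to the paper plus the standard fact that $c_3 \geq 0$ with equality iff locally free; the paper's version is shorter and yields the additional structural information that $\calt_\sigma(2)$ is a null correlation bundle in the $e = 3$ case.
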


\begin{proof} 
From \Cref{prop:indeg-bounds-pencilcubics}, we know that $e \leq 3$. For $\mu$-semistable sheaves, $e \geq 2$ and, since $c_2(\calt_\sigma(2)) = 1$ and $c_1(\calt_\sigma(2))=0$, from \cite[Lemma 2.1]{chang1984stable}, we conclude that the only possible cases are $c_3 = 2$ (strictly semistable case) or $c_3 = 0$ (stable case). For the strictly semistable case $e = 2$ we obtain $\Bour(\sigma) =1$, so we get the first case.

For the second case, we must have a stable bundle $\calt_\sigma(2)$ with Chern classes $(0, 1, 0)$, which are precisely null correlation bundles described in \cite{wever1977moduli} fitting in a sequence of the form
$$
0 \rightarrow \mathcal{O}_{\mathbb{P}^3}(-1) \rightarrow \calt_\sigma(2) \rightarrow \mathcal{I}_{B}(1) \rightarrow 0
$$
where $B$ is a pair of skew lines or their degeneration, thus $e = \indeg(\calt_\sigma) = 3$.
\end{proof}

Summarizing the main results for normal pencils of cubics, we get the following Theorem:

\phantomsection\label{thm:third}
\begin{theoremC}\label[theoremC]{thmC}
\textit{Let $\sigma = (f,g)$ be a normal pencil of cubic surfaces in $\mathbb{P}^3$. Then, if we denote by $e = \indeg(\calt_\sigma)$:
\begin{itemize}
    \item[(a)] $m(\sigma) \leq 12$ and equality holds if and only if $\sigma$ is compressible;
    \item[(b)] The sequence $\sigma$ is free if and only if $m(\sigma) =12, 9$ or $8$, corresponding to $e$ being $0, 1$ or $2$, respectively;
    \item[(c)] There is only one case of nearly-free sequence $\sigma$, with discrete invariants $m(\sigma) = 7$, $e = 2$ and $c_3(\calt_\sigma) = 2$ (see \Cref{ex:nearly-free-cubics}), which is strictly $\mu$-semistable;
    \item[(d)] If $m(\sigma) \leq 6$, then $\calt_\sigma$ is $\mu$-semistable, and if $m(\sigma) \leq 2$, then $\calt_\sigma$ is $\mu$-stable.
\end{itemize}}    
\end{theoremC}

\begin{proof}
Item $(a)$ is \Cref{prop:pencilcubics-compressible} and item $(b)$ with \Cref{prop:class-pencilcubics-m9} and \Cref{ex:free-incompressible-m8}. Item $(c)$ follows from \Cref{prop:class-pencilcubics-m7} and item $(d)$ is \Cref{prop:class-unstable-pencilcubics}.
\end{proof}

Next, we consider some examples. First, a strictly $\mu$-semistable pencil of cubics with $m(\sigma) = 4$.

\begin{example}[$m(\sigma) = 4$, $e = 2$, $\Bour(\sigma) = 4$, $3$-syzygy]\label[example]{ex:strictly-sst-pencil-cubics}
Consider the following pencil of cubics $(d_f = d_g = 2)$, where the first one is smooth:
$$
\sigma = (x_0^3 + x_1^3 + x_2^3 + x_3^3, x_0^3 + x_1^3 + x_2x_3^2),
$$
with Jacobian matrix
$$
\nabla \sigma = 
\begin{pmatrix}
3x_0^2 & 3x_1^2 & 3x_2^2 & 3x_3^2\\
3x_0^2 & 3x_1^2 & x_3^2 & 2x_2x_3
\end{pmatrix}.
$$
The scheme structures $\supp(\mathcal{Q}_\sigma) = \Xi_\sigma$ coincide, with support at a line $V(x_2, x_3)$. Over the prime ideal $\fp = (x_2, x_3)$, we perform some elementary operations for the matrix $\nabla \sigma$ to obtain the form:
$$
(\nabla \sigma)_{\fp} \sim
\begin{pmatrix}
0       & 0 & 3x_2^2 - x_3^2 & 3x_3^2 - 2x_2 x_3\\
ux_0^2  & 1 & (u/3)x_3^2       & (2/3)ux_2 x_3 
\end{pmatrix},
$$
so it sends the second basis vector to $(0,1) \in R_\fp^2$, and then we may compute the cokernel using the first line and remaining columns:
\begin{align*}
(\mathcal{Q}_\sigma)_{\fp} &\simeq \frac{R_\fp}{(3x_2^2 - x_3^2, 3x_3^2 - 2x_2 x_3)}\\
&\simeq \kappa\langle 1, x_2, x_3, x_2^2\rangle,    
\end{align*}
and thus we conclude $\length(\mathcal{Q})_{\fp} = 4$. Since $\deg(R/\fp) = 1$, using the associativity formula we obtain $m(\sigma) = 4$. A first syzygy is $\nu = (-x_1^2, x_0^2, 0, 0)^T$, so we obtain $e = 2$, and thus $\Bour(\sigma) = 4$.

The sheaf $\calt_\sigma$ admits a minimal free resolution of the form:
$$
0 \rightarrow \mathcal{O}_{\mathbb{P}^3}(-6) \to \mathcal{O}_{\mathbb{P}^3}(-4)^{\oplus 2} \oplus \mathcal{O}_{\mathbb{P}^3}(-2) \rightarrow \calt_\sigma \rightarrow 0,
$$
obtained using Macaulay2, so this is an example of a $3$-syzygy sequence, with $c_3(\calt_\sigma) = 16$.
\end{example}

To finish this subsection, we explore in an example that the Bourbaki degree of a pencil of cubics does not depend only on the singularity type of each cubic, but rather how these align when we consider the associated pencil.

\begin{example}\label[example]{ex:same-sing-diff-Bourbaki}
Consider the cubic polynomials:
\begin{align*}
f &= x_3(x_1^2 - x_0 x_2) + x_1^2(x_0 - x_1)\\
g &= x_3(x_1^2 - x_0 x_2) + x_1^3\\
g'&= x_0(x_3^2-x_1x_2) + x_3^3.
\end{align*}
Here, $V(f)$ is a cubic surface with singularity type $4A_1$, with primary ideals $(x_0, x_1, x_2)$, $(x_0, x_1^2, x_3)$ and $(x_1, x_2, x_3)$. On the other hand, the surfaces $V(g)$ and $V(g')$ are both of type $A_12A_2$ and are isomorphic, via the change of coordinates
\begin{align*}
x_0 &\mapsto x_1\\
x_1 &\mapsto x_3\\
x_3 &\mapsto x_0
\end{align*}
and fixing $x_3$. The sequence $\sigma = (f,g)$ is non-normal with $V(x_1)$ in the divisorial component of the Jacobian scheme. On the other hand, $\sigma' = (f,g')$ is a normal, generic pencil with $m(\sigma') = 0$.
\end{example}

\subsection{Degree 6 curves inside quadric surfaces}

Let us focus on the case of normal sequences $\sigma = (f,g)$ with $d_f = 1, d_g = 2$. Here, the Bourbaki degree of a sequence $\sigma$ in terms of $e = \indeg(\calt_\sigma)$ and $ m(\sigma)$ is given by:
$$
\Bour(\sigma) = e(e-3) + 7 - m(\sigma).
$$
We start by applying \Cref{thm:hartshorne1988} to the sheaf $\mathcal{E} = \calt_\sigma$, assuming $\sigma$ is incompressible. The quantities become $A = B = 2$.
Moreover, $c_2(\mathcal{E}) = 7-m(\sigma)$, so direct application of the result shows the following:

\begin{prop}\label[prop]{prop:bounds-c3-mixed-degrees}
Let $\sigma = (f,g)$ be an incompressible normal sequence with $d_f = 1$ and $d_g = 2$. Then $m(\sigma) \leq 5$ and the following hold:
\begin{itemize}
    \item[(a)] If $m(\sigma) = 5$, then $c_3(\calt_\sigma) = 0$.
    \item[(b)] If $0 \leq m(\sigma) < 4$, then $
    c_3 \leq m(\sigma)^2 - 12m(\sigma) + 35$.
\end{itemize}
\end{prop}

From \Cref{thmA}, $(d)$, we obtain:

\begin{prop}\label[prop]{prop:mixeddegrees-compressible}
Let $\sigma$ be a normal sequence with $d_f = 1$, $d_g = 2$. Then $\sigma$ is compressible if and only if $m(\sigma) = 7$ (see \Cref{ex:compressible-mixeddegrees}).
\end{prop}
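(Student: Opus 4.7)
The plan is to deduce this proposition as an immediate specialization of \Cref{thmA}, part $(c)$, to the degrees $d_f = 1$, $d_g = 2$. Recall that $m_0 = d_f^2 + d_g^2 + d_f d_g$, so substituting the given degrees yields
\[
m_0 = 1 + 4 + 2 = 7.
\]
By \Cref{thmA}$(c)$, the equalities $m(\sigma) = m_0$, $\indeg(\calt_\sigma) = 0$, compressibility of $\sigma$, and $\calt_\sigma \simeq \mathcal{O}_{\mathbb{P}^3} \oplus \mathcal{O}_{\mathbb{P}^3}(-3)$ are all equivalent. The implication I want is precisely the equivalence of the first and third items, so no further argument is needed beyond the substitution $m_0 = 7$.

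The only remaining task is to justify the existence statement implicit in the claim and reference an explicit witness: an example of a compressible normal sequence with $d_f = 1$, $d_g = 2$, whose $m(\sigma)$ is indeed $7$, confirming the value is attained. This is precisely the role of the forthcoming \Cref{ex:compressible-mixeddegrees}, where a pair of polynomials with one missing variable is exhibited and the invariants are computed, so I would simply point the reader there.

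There is no genuine obstacle here; the proposition is a one-line corollary of \Cref{thmA}$(c)$ once $m_0$ is evaluated at $(d_f, d_g) = (1,2)$. The proof therefore reduces to: apply \Cref{thmA}$(c)$ with $m_0 = 7$, and refer to \Cref{ex:compressible-mixeddegrees} for a concrete realization.
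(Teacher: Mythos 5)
Your proposal is correct and matches the paper exactly: the paper states this proposition as an immediate consequence of Theorem A, part $(c)$ (cited there with a typographical slip as ``$(d)$''), after evaluating $m_0 = d_f^2 + d_g^2 + d_f d_g = 7$ for $(d_f, d_g) = (1,2)$, and likewise refers to the compressible example for a concrete witness. No further comment is needed.
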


\begin{example}[Free and compressible sequence with $d_f = 1, d_g = 2$]\label[example]{ex:compressible-mixeddegrees}
Consider the sequence
$$
\sigma = (x_0(x_1 - x_2), x_0^3 + x_1^3 + x_2^3),
$$
which is independent of the variable $x_3$. The matrix
$$
\begin{pmatrix}
0 & -x_0(x_1^2 +x_2^2) \\
0 & x_0^3 + x_1 x_2^2 - x_2^3\\
0 & x_0^3 - x_1^3 + x_1^2 x_2 \\
1 & 0
\end{pmatrix}
$$
gives linearly independent syzygies for $\nabla \sigma$, and thus $\calt_\sigma \simeq \mathcal{O}_{\mathbb{P}^3} \oplus \mathcal{O}_{\mathbb{P}^3}(-3)$.
\end{example}

Next, we apply \Cref{thm:hartshorne1982} to the sheaf $\mathcal{E} = \calt_\sigma(1)$, a sheaf with $c_1 = -1$ and $c_2(\mathcal{E}) = 5 - m(\sigma) \geq 0$. Then, for
\begin{align*}
t &> \sqrt{3c_2 + \frac{1}{4}}-\frac{3}{2} = \sqrt{15 - 3m(\sigma) + \frac{1}{4}}-\frac{3}{2}
\end{align*}
we have $h^0(\mathcal{E}(t)) = h^0(\calt_\sigma(t+1)) \neq 0$. Direct application of this bound shows the following proposition.

\begin{prop}\label[prop]{prop:indeg-bounds-mixed-degrees}
Let $\sigma = (f,g)$ be a normal sequence with $d_f = 1$, $d_g = 2$. Then $e = \indeg(\calt_\sigma) \leq 3$. Whenever $m(\sigma) = 4$, $e \leq 2$ and, whenever $m(\sigma) = 5$, $e \leq 1$.
\end{prop}

We explore some parity results in the next two propositions about local freeness.

\begin{prop}\label[prop]{prop:locfree-notpencil-odd}
Let $\sigma = (f,g)$ be a normal sequence such that $d = d_f + d_g$ is odd. If $\Bour(\sigma)$ is odd, then $\sigma$ is not locally free.
\end{prop}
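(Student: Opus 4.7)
The plan is to derive a parity contradiction from the formula for $c_3(\calt_\sigma)$ given in \Cref{rmk:genusB-c3}, under the assumption that $\calt_\sigma$ is locally free.

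First I would recall that if $\calt_\sigma$ is locally free, then as it is a rank two reflexive sheaf on $\mathbb{P}^3$, its third Chern class vanishes: $c_3(\calt_\sigma) = 0$. Picking a minimal degree syzygy $\nu \in H^0(\calt_\sigma(e))$ with $e = \indeg(\calt_\sigma)$, the associated Bourbaki scheme $B = B_\nu$ has degree $\deg(B) = \Bour(\sigma)$, and \Cref{rmk:genusB-c3} yields
$$
0 \;=\; c_3(\calt_\sigma) \;=\; 2p_a(B) - 2 + \Bour(\sigma)\bigl(4 + d - 2e\bigr),
$$
so that
$$
\Bour(\sigma)\bigl(4 + d - 2e\bigr) \;=\; 2 - 2p_a(B).
$$

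Next I would examine the parity of each side. The right-hand side $2-2p_a(B)$ is manifestly even. On the left, the factor $4+d-2e$ has the same parity as $d$, and $d$ is odd by hypothesis; hence $4+d-2e$ is odd. If in addition $\Bour(\sigma)$ is odd, then the product $\Bour(\sigma)(4+d-2e)$ is odd, contradicting the fact that it equals the even integer $2-2p_a(B)$.

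Therefore the assumption that $\calt_\sigma$ is locally free must fail, proving the claim. There is no serious obstacle here; the only subtlety is to confirm that the formula of \Cref{rmk:genusB-c3} applies to our choice of $\nu$, which is immediate because $\nu$ is a non-zero section of $\calt_\sigma(e)$ with $e = \indeg(\calt_\sigma)$, exactly the hypothesis of that remark. Note that the assumption $d_f \neq d_g$ is not used in the parity argument itself; it is recorded in the statement merely to emphasize that the result is phrased outside the pencil case, where $d = 2d_f$ is automatically even and the hypothesis would be vacuous.
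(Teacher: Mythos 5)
Your proof is correct and is essentially the same as the paper's: both assume local freeness to get $c_3(\calt_\sigma)=0$, plug into the formula of \Cref{rmk:genusB-c3} with $\deg(B)=\Bour(\sigma)$, and derive a parity contradiction from $d$ and $\Bour(\sigma)$ both being odd (the paper phrases it as $p_a(B)$ failing to be an integer, which is the same parity obstruction). Your side remark that the hypothesis $d_f \neq d_g$ is already implied by $d$ being odd is also accurate.
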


\begin{proof}
Assuming $\calt_\sigma$ is locally free, we get $c_3(\calt_\sigma) = 0$, and therefore $2p_a(B)-2 = - \Bour(\sigma)(4 + d - 2e)$. But the left-hand-side is even, and the right-hand side is a product of two odd numbers, hence odd, so we get a contradiction.
\end{proof}

\begin{prop}\label[prop]{even-m-mixeddegrees-never-lf}
Let $\sigma$ be an incompressible sequence with $d_f = 1, d_g = 2$ such that $m(\sigma)$ is even, that is, $m(\sigma) \in \{0, 2, 4\}$. Then $\sigma$ is not locally free.
\end{prop}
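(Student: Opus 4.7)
The plan is to reduce this to Proposition \ref{prop:locfree-notpencil-odd} by a quick parity argument. Note that for the hypothesis $d_f=1$, $d_g=2$ we have $d = d_f + d_g = 3$, which is odd, and $\sigma$ is not a pencil since $d_f \neq d_g$, so the hypotheses of Proposition \ref{prop:locfree-notpencil-odd} are met. It therefore suffices to show that $\Bour(\sigma)$ is odd whenever $m(\sigma)$ is even.

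I would use the Bourbaki formula from Definition \ref{def:Bour}, which in this case reads
$$
\Bour(\sigma) = e(e-3) + 7 - m(\sigma),
$$
where $e = \indeg(\calt_\sigma)$. The key observation is that $e(e-3)$ is always even, because exactly one of the two consecutive-parity integers $e$ and $e-3$ is even. Since $7$ is odd and, by assumption, $m(\sigma)$ is even, it follows that $\Bour(\sigma)$ is odd.

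Applying Proposition \ref{prop:locfree-notpencil-odd} now gives that $\calt_\sigma$ cannot be locally free. The only thing to double check is that the assumption $m(\sigma) \in \{0,2,4\}$ is indeed an exhaustive list of even values: this is ensured by Proposition \ref{prop:bounds-m-mixed-degrees}, which bounds $m(\sigma) \leq 5$ for incompressible sequences with $d_f=1$, $d_g=2$. There is no real obstacle here; the statement is essentially an immediate parity corollary of the previous proposition, and the only subtlety is the observation that $e(e-3)$ has fixed parity independent of $e$.
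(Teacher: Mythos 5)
Your proof is correct and follows essentially the same route as the paper: both reduce to \Cref{prop:locfree-notpencil-odd} by showing that $e(e-3)$ is always even so that $\Bour(\sigma) = e(e-3) + 7 - m(\sigma)$ is odd when $m(\sigma)$ is even. Your parity observation (exactly one of $e$, $e-3$ is even since they differ by an odd number) is a slightly slicker packaging of the paper's two-case computation with $e = 2k$ and $e = 2k+1$, but the argument is identical in substance.
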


\begin{proof}
We show that, in any of these cases above, the Bourbaki degree $\Bour(\sigma) = e^2 - 3e + 7-m(\sigma)$ is odd, thus the result follows from \Cref{prop:locfree-notpencil-odd}. Whenever $m(\sigma)$ is even, $7-m(\sigma)$ is odd. We claim $e^2 - 3e$ is always an even number for $e \geq 0$, and therefore $\Bour(\sigma)$ is always odd for $m(\sigma)$ even. First, assuming $e = 2k$ is even, we obtain
$$
e^2 - 3e = 4k^2 - 6k = 2(2k^2 - 3k),
$$
an even number. On the other hand, when $e = 2k+1$ is odd, then
$$
e^2 - 3e = 4k^2 + 4k + 1 - 6k - 3 = 4k^2 - 2k -2 = 2(2k^2 - k - 1),
$$
which is also even.
\end{proof}

\begin{prop}\label[prop]{prop:class-mixeddegrees-m5}
Let $\sigma = (f,g)$ be an incompressible normal sequence with degrees $d_f = 1$, $d_g = 2$. Then $\calt_\sigma \simeq \mathcal{O}_{\mathbb{P}^3}(-1) \oplus \mathcal{O}_{\mathbb{P}^3}(-2)$ if and only if $m(\sigma) = 5$ (see \Cref{ex:mixeddegrees-m5}).
\end{prop}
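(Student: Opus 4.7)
The plan is to establish the equivalence by treating the two implications separately, with both directions relying on the Bourbaki degree formula $\Bour(\sigma) = e(e-3) + 7 - m(\sigma)$ specialized to $d_f = 1, d_g = 2$, so that $d = 3$ and $m_0 = 7$.

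For the forward direction, I would assume $\calt_\sigma \simeq \mathcal{O}_{\mathbb{P}^3}(-1) \oplus \mathcal{O}_{\mathbb{P}^3}(-2)$. Then $\sigma$ is free by definition, so $\Bour(\sigma) = 0$ by \Cref{rmk:Bour-free-seq}, and the initial degree $e = \indeg(\calt_\sigma) = 1$ is read off from the smallest summand. Substituting into the Bourbaki formula yields
\begin{equation*}
0 = 1 \cdot (1-3) + 7 - m(\sigma) = 5 - m(\sigma),
\end{equation*}
giving $m(\sigma) = 5$.

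For the reverse direction, suppose $m(\sigma) = 5$. I would invoke \Cref{prop:indeg-bounds-mixed-degrees} to conclude $e \leq 1$, and since $\sigma$ is assumed incompressible, \Cref{thmA}(c) forces $e \geq 1$; hence $e = 1$. The Bourbaki formula then gives $\Bour(\sigma) = -2 + 7 - 5 = 0$, so $\sigma$ is free by \Cref{rmk:Bour-free-seq}, and $\calt_\sigma$ splits as a sum of two line bundles $\mathcal{O}_{\mathbb{P}^3}(-a) \oplus \mathcal{O}_{\mathbb{P}^3}(-b)$ with $a \leq b$. Since $c_1(\calt_\sigma) = -d = -3$, we have $a + b = 3$, and the identification $e = \min(a,b) = a = 1$ forces $b = 2$. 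This gives the claimed isomorphism.

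The conceptual content is minimal once the earlier bounds are in place; the only subtlety lies in the reverse direction, where the step $e \leq 1$ depends on \Cref{prop:indeg-bounds-mixed-degrees} (ultimately from \cite{Hartshorne1982}, 0.1), and the incompressibility hypothesis is essential to rule out $e = 0$, which would otherwise allow the splitting $\mathcal{O}_{\mathbb{P}^3} \oplus \mathcal{O}_{\mathbb{P}^3}(-3)$ of \Cref{ex:compressible-mixeddegrees}.
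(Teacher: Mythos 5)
Your proposal is correct and follows essentially the same route as the paper: both directions rest on the Bourbaki degree formula $\Bour(\sigma) = e(e-3) + 7 - m(\sigma)$, with the reverse implication pinning down $e = 1$ by combining the bound $e \leq 1$ from \Cref{prop:indeg-bounds-mixed-degrees} with $e \geq 1$ from incompressibility via \Cref{thmA}(c). Your write-up is in fact slightly more explicit than the paper's in spelling out why the splitting must be $\mathcal{O}_{\mathbb{P}^3}(-1) \oplus \mathcal{O}_{\mathbb{P}^3}(-2)$ once freeness is known.
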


\begin{proof}
Assuming $\sigma$ is incompressible, by \Cref{prop:indeg-bounds-mixed-degrees}, we obtain that $e = 1$. But from the formula for the Bourbaki degree with $e = 1$, $m(\sigma) = 5$ we obtain $\Bour(\sigma) = 0$, and thus $\sigma$ must be free. On the other hand, if we assume $\calt_\sigma \simeq \mathcal{O}_{\mathbb{P}^3}(-1) \oplus \mathcal{O}_{\mathbb{P}^3}(-2)$, then $e = 1$ and $\Bour(\sigma) = 0$ give $m(\sigma) = 5$.
\end{proof}

\begin{example}[Free and incompressible sequence, $m(\sigma) = 5$]\label[example]{ex:mixeddegrees-m5}
Consider the sequence
$$
\sigma = (x_0x_1, x_3x_2(x_0 - x_1)),
$$
of arrangements of hyperplanes, with Jacobian matrix given by:
$$
\nabla \sigma = \begin{pmatrix}
x_1 & x_0 & 0 & 0\\
x_2x_3 & -x_2x_3 & x_3(x_0-x_1) & x_2(x_0-x_1)
\end{pmatrix}
$$
The matrix
$$
\begin{pmatrix}
0 & x_0(x_0 -x_1) \\
0 & -x_1(x_0-x_1)\\
x_2 & 0 \\
-x_3 & - x_3(x_0 + x_1)
\end{pmatrix}
$$
gives linearly independent syzygies for $\nabla \sigma$, and thus $\calt_\sigma \simeq \mathcal{O}_{\mathbb{P}^3}(-1) \oplus \mathcal{O}_{\mathbb{P}^3}(-2)$, with $e = \indeg(\calt_\sigma) = 1$ and $m(\sigma) = 5$.
\end{example}

\begin{prop}\label[prop]{prop:class-mixeddegrees-m4}
Let $\sigma = (f,g)$ be a normal sequence with degrees $d_f = 1$, $d_g = 2$. If $m(\sigma) = 4$, then $\sigma$ is nearly free, and we have two possible cases:  
\begin{itemize}
    \item[(a)] $\calt_\sigma$ is $\mu$-stable with $e = 2$ and $c_3(\calt_\sigma) = 1$ (see \Cref{ex:nearly-free-mixed-degrees-e2});
    \item[(b)] $\calt_\sigma$ is unstable with $e = 1$ and $c_3(\calt_\sigma) = 3$ (see \Cref{ex:nearly-free-mixed-degrees-e1});
\end{itemize}
Furthermore, these are the only two possibilities of numerical invariants for nearly-free sequences with $d_f = 1, d_g = 2$.
\end{prop}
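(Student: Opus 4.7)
The plan is to pin down $e = \indeg(\calt_\sigma)$ from the constraint $m(\sigma) = 4$ and then read everything off from the Bourbaki-degree formula. First, \Cref{thmA}(c) rules out compressibility because $m(\sigma) = 4 \neq 7 = m_0$, so $e \geq 1$. Next, \Cref{prop:indeg-bounds-mixed-degrees} applied with $m(\sigma) = 4$ gives $e \leq 2$. Thus $e \in \{1,2\}$, and in either case the formula
$\Bour(\sigma) = e(e-3) + 7 - m(\sigma) = e(e-3) + 3$
yields $\Bour(\sigma) = 1$, so $\sigma$ is nearly free by \Cref{def:geometry-of-B}.

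With $\sigma$ nearly free, \Cref{prop:nearly-free-seq} tells us that $B = B_\nu$ is a line, so $\deg(B) = 1$ and $p_a(B) = 0$. I substitute these together with $d = d_f + d_g = 3$ into the formula from \Cref{rmk:genusB-c3}, obtaining $c_3(\calt_\sigma) = 2 p_a(B) - 2 + \deg(B)(4 + d - 2e) = 5 - 2e$, which gives $c_3 = 3$ for $e = 1$ and $c_3 = 1$ for $e = 2$, as claimed.

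The $\mu$-stability assertion follows from $\mu(\calt_\sigma) = c_1(\calt_\sigma)/\rank(\calt_\sigma) = -3/2$. Since this slope is non-integral and every saturated rank-one subsheaf of a reflexive sheaf on $\mathbb{P}^3$ is a line bundle $\mathcal{O}_{\mathbb{P}^3}(-k)$ with $k \geq e$, the case $e = 2$ forces every rank-one subsheaf to have slope $\leq -2 < -3/2$, giving $\mu$-stability, whereas $e = 1$ supplies an inclusion $\mathcal{O}_{\mathbb{P}^3}(-1) \hookrightarrow \calt_\sigma$ of slope $-1 > -3/2$, making $\calt_\sigma$ $\mu$-unstable.

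For the final assertion, any nearly-free $\sigma$ with $d_f = 1$, $d_g = 2$ satisfies $m(\sigma) = e^2 - 3e + 6$ from $\Bour(\sigma) = 1$. Since nearly-free sequences are not free, $\sigma$ is incompressible, so \Cref{prop:bounds-m-mixed-degrees} yields $m(\sigma) \leq 5$; then $e^2 - 3e + 1 \leq 0$ forces $e \in \{1,2\}$ and hence $m(\sigma) = 4$ in both cases. I do not expect any real obstacle: every ingredient is already available in the preceding sections, and the one mildly subtle point is the reduction of the stability test to line bundles $\mathcal{O}_{\mathbb{P}^3}(-e)$, which is justified because saturated rank-one reflexive subsheaves on $\mathbb{P}^3$ are forced to be invertible.
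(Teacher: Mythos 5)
Your proposal is correct and follows essentially the same route as the paper: bound $e\in\{1,2\}$ via \Cref{prop:indeg-bounds-mixed-degrees} and incompressibility, deduce $\Bour(\sigma)=1$ from the degree formula, and read off $c_3=5-2e$ from \Cref{rmk:genusB-c3} with $B$ a line. You additionally spell out the slope computation for the (in)stability claims and handle the uniqueness of the numerical invariants by combining $\Bour(\sigma)=1$ with the bound $m(\sigma)\leq 5$ of \Cref{prop:bounds-m-mixed-degrees}, which is a clean, equivalent variant of the paper's inequality $\Bour(\sigma)\geq 2$ for $m(\sigma)\leq 3$.
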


\begin{proof}
Using \Cref{prop:indeg-bounds-mixed-degrees}, $e \in \{1,2\}$, and both cases imply $\Bour(\sigma) = 1$ when $m(\sigma) = 4$. Using $p_a(B) = 0$ and $\deg(B) = 1$, we obtain the $c_3$'s above, and both can be realized. To conclude the last claim, for $m(\sigma) \leq 3$,
$$
\Bour(\sigma) = 7 - m(\sigma) + e(e-3) \geq 4 + e(e-3) \geq 2
$$
for possible values $1 \leq e \leq 3$.
\end{proof}

\begin{example}[Nearly-free sequence with $d_f = 1, d_g = 2$ and $e = 2$]\label[example]{ex:nearly-free-mixed-degrees-e2}
We consider the following normal sequence with $d_f = 1$, $d_g = 2$:
$$
\sigma = (x_0x_1-x_2x_3, x_1x_3(x_0 - x_2)),
$$
with Jacobian matrix given by
$$
\nabla \sigma = \begin{pmatrix}
x_1 & x_0 & -x_3 & -x_2\\
x_1x_3 & x_3(x_0-x_2) & -x_1x_3 & x_1(x_0-x_2)
\end{pmatrix}.
$$
The irreducible components of $\Xi_\sigma$ consist of three lines $V(x_1, x_3)$, $V(x_1, x_0 - x_2)$ and $V(x_3, x_0 - x_2)$ and a point $p = V(x_2, x_1-x_3, x_0)$ outside the three lines. We compute $m(\sigma) = 4$, where the first line above occurs with multiplicity two and the other two are simple. 

For $\fp_{1} = (x_1, x_3)$, we may consider $u = (x_0)^{-1}$ and perform elementary operations to rewrite the matrix as
$$
(\nabla \sigma)_{\fp_{1}} \sim \begin{pmatrix}
ux_1 & 1 & -ux_3 & -ux_2\\
x_1 x_3(1 - u(x_0-x_2)) & 0 & -x_3(x_1 - x_3(x_0-x_2)u) & (x_0-x_2)(x_1 - x_2x_3u)
\end{pmatrix}.
$$
Thus, it sends the second basis vector $e_2 \in R_{\fp_{1}}^4$ to $(1,0) \in R_{\fp_{1}}^2$, and we may compute the cokernel from the second row of the remaining matrix:
\begin{align*}
(\mathcal{Q}_\sigma)_{\fp_{1}} &\simeq \frac{R_{\fp_{1}}}{(x_1 x_3(1 - u(x_0-x_2)), -x_3(x_1 - x_3(x_0-x_2)u), (x_0-x_2)(x_1 - x_2x_3u))}\\
&\simeq \frac{R_{\fp_{1}}}{(x_1x_3, -x_3^2, x_1 - x_2 x_3 u)}\\
&\simeq \kappa\langle 1, x_3\rangle,
\end{align*}
simplifying the invertible elements $(1 - u(x_0-x_2)), (x_0-x_2) \notin \fp_{1}$, hence $\length((\mathcal{Q}_\sigma)_{\fp_{1}}) = 2$. 

For $\fp_2 = (x_1, x_0-x_2)$, consider $u = (-x_3)^{-1} \in R_{\fp_2}$ and perform elementary operations to rewrite $\nabla \sigma$ as
$$
(\nabla \sigma)_{\fp_2} \sim \begin{pmatrix}
u x_1 & u x_0 & 1 & -ux_2\\
x_1x_3(1 + ux_1) & x_3((x_0-x_2)+ux_0x_1) & 0 & x_1(x_0-x_2)-ux_1x_2x_3
\end{pmatrix}.
$$
Since it sends $e_3$ to $(1,0) \in R_{\fp_2}^2$, we may compute the cokernel using the second row and remaining columns:
$$
(\mathcal{Q}_\sigma)_{\fp_2} \simeq \frac{R_{\fp_2}}{(f_1, f_2, f_3)} \doteq \frac{R_{\fp_2}}{I},
$$
where
\begin{align*}
f_1 &= x_1x_3(1+ux_1)\\
f_2 &= x_3( (x_0-x_2) + ux_0x_1 )\\
f_3 &= x_1( (x_0-x_2) - ux_2x_3).
\end{align*}
Since $f_1 \in I$ and $x_3(1+ux_1) \in R_{\fp_2}^\times$, we obtain $x_1 \in I$. From this and $f_2 \in I$, we obtain:
$$
x_3(x_0-x_2) \in I \Rightarrow (x_0-x_2) \in I,
$$
and thus $I = (x_0-x_2, x_1) = \fp_2 R_{\fp_2}$, so that $(\mathcal{Q}_{\sigma})_{\fp_2} \simeq \kappa$, and thus $\length((\mathcal{Q}_{\sigma})_{\fp_2}) = 1$.

For $\fp_3 = (x_3, x_0-x_2)$, take $u = (x_1)^{-1} \in R_{\fp_3}$ and an elementary operations to obtain 
$$
(\nabla \sigma)_{\fp_3} \sim \begin{pmatrix}
1 & u x_0 & -ux_3 & -ux_2\\
0 & x_3((x_0-x_2)-ux_0x_1) & x_1x_3(-1+ux_3) & x_1(x_0-x_2)+ux_1x_2x_3
\end{pmatrix}.
$$
Since it sends $e_1$ to $(1,0) \in R_{\fp_2}^2$, we may compute the cokernel using the second row and remaining columns:
$$
(\mathcal{Q}_\sigma)_{\fp_2} \simeq \frac{R_{\fp_2}}{(f_1, f_2, f_3)} \doteq \frac{R_{\fp_2}}{I},
$$
where
\begin{align*}
f_1 &= x_3((x_0-x_2)-ux_1x_0)\\
f_2 &= x_1x_3( ux_3 - 1 )\\
f_3 &= x_1( (x_0-x_2)+ux_2x_3 ).
\end{align*}
Since $f_1 \in I$ and $(x_0-x_2) - ux_1x_0 \in R_{\fp_2}^\times$, we obtain $x_3 \in I$. From this and $f_3 \in I$, we obtain:
$$
x_1(x_0-x_2) \in I \Rightarrow (x_0-x_2) \in I,
$$
and thus $I = (x_3, x_0-x_2) = \fp_2 R_{\fp_2}$, so that $(\mathcal{Q}_{\sigma})_{\fp_2} \simeq \kappa$, and thus $\length((\mathcal{Q}_{\sigma})_{\fp_2}) = 1$. From the associativity formula over $\fp_1, \fp_2, \fp_3$, we then obtain $
m(\sigma) = 2 \cdot 1 + 1 \cdot 1 + 1 \cdot 1 = 4$. 

From the Bourbaki degree formula, once we know that $e = 2$, we conclude that $\Bour(\sigma) = 1$ and this is a nearly-free sequence. The minimal free resolution of $\calt_\sigma$ can be obtained computationally
$$
0 \rightarrow \mathcal{O}_{\mathbb{P}^3}(-3) \xrightarrow{\gamma} \mathcal{O}_{\mathbb{P}^3}(-2)^{\oplus 3} \xrightarrow{M} \calt_\sigma \rightarrow 0
$$
given by matrices 
$$
M=\begin{pmatrix}
x_0x_1 +x_3(x_2-x_0) & x_0^2 & x_0x_3\\
-x_1^2+x_1x_3 & -x_0x_1 & 0\\
x_1x_2 & x_2^2 & x_0x_1 + x_3(x_2-x_1)\\
0 & -x_2x_3 & x_1x_3 - x_3^2
\end{pmatrix},
\gamma = \begin{pmatrix}
x_0\\
x_3-x_1\\
-x_2
\end{pmatrix}.
$$
which corresponds to the nearly-free resolution given in \Cref{prop:nearly-free-seq}. We have $c_3(\calt_\sigma) = 1$, corresponding to the point $p$, the unique irreducible component of $\Xi_\sigma$ of codimension three. 
\end{example}

\begin{prop}\label[prop]{prop:class-mixeddegrees-m3}
Let $\sigma = (f,g)$ be a normal sequence with degrees $d_f = 1$, $d_g = 2$, such that $m(\sigma) = 3$ and $\calt_\sigma$ is $\mu$-stable. Then $e = \indeg(\calt_\sigma) = 2$, $\Bour(\sigma) = 2$ and we may have $c_3 = 0, 2, 4$. We have examples for the cases $c_3 = 2$ (see \Cref{ex:B2-mixed-degrees-c3-2}) and $c_3 = 4$ (see \Cref{ex:B2-mixed-degrees-c3-4}).
\end{prop}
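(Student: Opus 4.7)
The plan is to determine $e=\indeg(\calt_\sigma)$ exactly, read off $\Bour(\sigma)$ from the Bourbaki formula, and then enumerate the possible values of $c_3(\calt_\sigma)$ using the geometry of the Bourbaki curve. From $m(\sigma)=3$ and the identity $c_2(\calt_\sigma)=d_f^2+d_g^2+d_fd_g-m(\sigma)$ obtained in the proof of \Cref{prop:Bour-degree-formula}, one has $c_2(\calt_\sigma)=1+4+2-3=4$. \Cref{prop:mixeddegrees-m3-unstable} gives $e\geq 2$ (so $\calt_\sigma$ is $\mu$-stable), while \Cref{thmA}(a) gives $e\leq d=3$, so $e\in\{2,3\}$.

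The main obstacle is ruling out $e=3$. I would proceed by applying a bound on the initial degree of a stable rank two reflexive sheaf on $\mathbb{P}^3$ in terms of its Chern classes. The twist $\calt_\sigma(1)$ is $\mu$-stable with $c_1=-1$ and $c_2=2$; if $e=3$ then its initial degree equals $\alpha(\calt_\sigma(1))=2$, and Hartshorne's inequality for stable rank two reflexive sheaves on $\mathbb{P}^3$ with $c_1=-1$ (\cite[Theorem 0.1]{Hartshorne1982}, the same tool underlying \Cref{prop:indeg-bounds-mixed-degrees}) requires $c_2\geq \alpha(\alpha+1)=6$, contradicting $c_2=2$. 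If the clean spectrum bound falls short in the needed range, an alternative exploits the Bourbaki sequence $0\to\mathcal{O}_{\mathbb{P}^3}(-3)\to\calt_\sigma\to\mathcal{I}_B\to 0$ with $\deg(B)=4$: the vanishings $h^0(\calt_\sigma(n))=0$ for $n\leq 2$ (from $e=3$) force $h^0(\mathcal{I}_B(1))=h^0(\mathcal{I}_B(2))=0$, and a Riemann--Roch estimate on the locally Cohen--Macaulay curve $B$ of degree $4$ shows that $B$ must lie on at least one quadric, giving a contradiction. Either way one concludes $e=2$.

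With $e=2$ the Bourbaki formula yields $\Bour(\sigma)=2(2-3)+7-3=2$, so the Bourbaki curve $B=B_\nu$ is a pure one-dimensional subscheme of $\mathbb{P}^3$ of degree $2$. Substituting $d=3$, $e=2$ and $\deg(B)=2$ into \Cref{rmk:genusB-c3} gives $c_3(\calt_\sigma)=2p_a(B)+4$. By \Cref{thm:mult-2-lines}, $p_a(B)=-1-a$ with $a\geq -1$, so $c_3=2-2a$. Since $\calt_\sigma$ is rank two reflexive on $\mathbb{P}^3$ its singular locus is zero-dimensional and $c_3(\calt_\sigma)\geq 0$, forcing $a\leq 1$. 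Consequently $a\in\{-1,0,1\}$ and $c_3\in\{4,2,0\}$, matching the statement. The two nonzero values are then exhibited by \Cref{ex:B2-mixed-degrees-c3-2} and \Cref{ex:B2-mixed-degrees-c3-4}.
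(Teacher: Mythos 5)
Your skeleton is right — everything reduces to showing $e\neq 3$ — and the part of your argument downstream of $e=2$ is correct: $\Bour(\sigma)=2$, the formula $c_3(\calt_\sigma)=2p_a(B)+4$ from \Cref{rmk:genusB-c3}, Nollet's bound $a\geq -1$ together with $c_3\geq 0$ giving $c_3\in\{0,2,4\}$. But the exclusion of $e=3$ is where the actual content of the proposition sits, and neither of your two proposed arguments closes it. The inequality you attribute to \cite[Theorem 0.1]{Hartshorne1982}, namely $c_2\geq\alpha(\alpha+1)$ for a stable rank two reflexive sheaf, is false as stated: the null correlation bundle has $(c_1,c_2)=(0,1)$ with $\alpha=1$, and the paper's own \Cref{ex:nearly-free-mixed-degrees-e2} gives a $\mu$-stable $\calt_\sigma$ whose twist $\calt_\sigma(1)$ has $(c_1,c_2)=(-1,1)$ and $\alpha=1$. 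Whatever the correct form of that spectrum bound is, it does not force $\alpha(\calt_\sigma(1))\leq 1$ when $(c_1,c_2)=(-1,2)$ — the paper itself signals this, since \Cref{prop:indeg-bounds-mixed-degrees} records only $e\leq 3$ in general and omits any sharper row for $m(\sigma)=3$. Your fallback is also unsound: it is not true that every locally Cohen--Macaulay curve of degree $4$ in $\mathbb{P}^3$ lies on a quadric. Two disjoint conics in general position form a degree $4$ curve with $p_a=-1$ and $h^0(\cali_B(2))=0$ (a direct check with $C_1=V(x_3,\,x_0x_1-x_2^2)$ and $C_2=V(x_0,\,x_1x_2-x_3^2)$), and $p_a(B)=-1$ is precisely the value corresponding to $c_3=0$ in the hypothetical $e=3$ case, so this is not a configuration you can dismiss.

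The paper's route is different and is the one you would need to import. It first pins down $c_3\in\{0,2,4\}$ \emph{before} knowing $e$, using $c_3\geq 0$, parity, and the bound $c_3\leq 4$ from \Cref{prop:bounds-c3-mixed-degrees}. It then views $\calt_\sigma(1)$ as a stable rank two reflexive sheaf with Chern classes $(-1,2,c_3)$ and invokes, case by case, classification results for such sheaves — \cite[Proposition 1.1]{sols1981stable} for $c_3=0$, \cite[Lemma 2.4]{chang1984stable} for $c_3=2$, and \cite[Lemma 9.6]{Hartshorne1980} for $c_3=4$ — each of which guarantees $H^0(\calt_\sigma(2))\neq 0$, hence $e=2$. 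Without these case-specific cohomological facts (or an equivalent computation for stable sheaves with $(c_1,c_2)=(-1,2)$), the purely numerical tools you invoke are not strong enough, so as written the proof has a genuine gap.
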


\begin{proof}
The sheaf $\calt_\sigma(1)$ is a stable rank two reflexive sheaf with Chern classes $(-1, 2, c_3)$, and $c_3 \leq c_2^2 = 4$, using \cite[Theorem 8.2]{Hartshorne1980}. All three possibilities $c_3 \in \{0, 2, 4\}$ imply $e = 2$. Indeed, this follows from \cite[Proposition 1.1]{sols1981stable} for $c_3 = 0$, \cite[Lemma 2.4]{chang1984stable} for $c_3 = 2$ and \cite[Lemma 9.6]{Hartshorne1980} for $c_3 = 4$.
\end{proof}

Next examples illustrate some cases of the proposition above.

\begin{example}[$m(\sigma) = 3$, $e = 2$, $\Bour(\sigma) = 2$, $c_3 = 2$]\label[example]{ex:B2-mixed-degrees-c3-2}
We consider the following sequence:
$$
\sigma = (x_3(x_0 - x_1), x_0^2x_2+x_0x_1x_3+x_3^3)
$$
with corresponding Jacobian matrix given by:
$$
\nabla \sigma = \begin{pmatrix}
x_3 & -x_3 & 0 & x_0 - x_1\\
2x_0x_2+x_1x_3 & x_0x_3 & x_0^2 & x_0x_1 + 3x_3^2
\end{pmatrix}.
$$
Here, $(\Xi_\sigma)_1$ has two irreducible components, the lines $V(x_3, x_0)$ and $V(x_3, x_0-x_1)$. Let the corresponding primes be denoted by $\fp_1, \fp_2$.

Over the prime $\fp_1 = (x_0, x_3)$, let us take $u \doteq (x_0-x_1)^{-1} \in R_{\fp_1}$ and $Q \doteq x_0x_1 + 3x_3^2$. Consider elementary operations to rewrite
$$
(\nabla \sigma)_{\fp_1} \sim \begin{pmatrix}
ux_3 & -ux_3 & 0 & 1\\
2x_0x_2 + x_1 x_3 - ux_3 & x_0x_3 + uQx_3 & x_0^2 & 0
\end{pmatrix},
$$
so the matrix sends $e_4 \in R_{\fp_1}^4$ to $(1,0) \in R_{\fp_1}^2$, so we may compute the cokernel from the second row and remaining columns:
$$
(\mathcal{Q}_\sigma)_{\fp_1} \simeq \frac{R_{\fp_1}}{f_1, f_2, f_3} \doteq \frac{R_{\fp_1}}{I},
$$
where
\begin{align*}
f_1 &= 2x_0x_2 + x_1 x_3 - u(x_0x_1 + 3x_3^2)x_3\\
f_2 &= x_3(1 + u(x_0x_1 + 3x_3^2))\\
f_3 &= x_0^2.
\end{align*}
Therefore, since $f_2 \in I$ and $u(x_0x_1 + 3x_3^2) \in \fp_1$, we conclude $x_3 \in I$. Rewriting $uf_1$ and removing terms with $x_3$, we obtain $
uf_1 = 2x_0^3x_1x_2$, and since $f_3 = x_0^2 \in I$, we obtain $I = (x_0^2, x_3)$, hence
$$
(\mathcal{Q}_\sigma)_{\fp_1} \simeq \frac{R_{\fp_1}}{(x_0^2, x_3)} \simeq \kappa \langle 1, x_0 \rangle 
$$
so that $\length(\mathcal{Q}_\sigma)_{\fp_1} = 2$. 

Over $\fp_2 = (x_3, x_0-x_1)$, take $u \doteq (x_0^2)^{-1}$ and consider an elementary operation to obtain a matrix
$$
(\nabla\sigma)_{\fp_2} \sim \begin{pmatrix}
x_3 & -x_3 & 0 & x_0-x_1\\
u(2x_0x_2 + x_1 x_3) & u x_0x_3 & 1 & x_0x_1 + 3x_3^2
\end{pmatrix},
$$
which sends $e_3 \in R_{\fp_2}^4$ to $(0,1) \in R_{\fp_2}^2$, so we may compute the cokernel using the remaining columns of the first row, namely:
$$
(\mathcal{Q}_\sigma)_{\fp_2} \simeq \frac{R_{\fp_2}}{(x_3, x_0-x_1)} = \frac{R_{\fp_2}}{\fp_2 R_{\fp_2}} \simeq \kappa,
$$
and therefore $\length(\mathcal{Q}_\sigma)_{\fp_2} = 1$. Thus, from the additivity formula, we obtain $m(\sigma) = 3$. Moreover, $e = 2$, so that $\Bour(\sigma) = 2$. The sheaf $\calt_\sigma$ admits a free resolution of the form:
$$
0 \rightarrow \mathcal{O}_{\mathbb{P}^3}(-5) \rightarrow \mathcal{O}_{\mathbb{P}^3}(-4)^4 \rightarrow \mathcal{O}_{\mathbb{P}^3}(-2) \oplus \mathcal{O}_{\mathbb{P}^3}(-3)^4 \rightarrow \calt_\sigma \rightarrow 0.
$$
so we get $c_3(\calt_\sigma) = 2$ and the Bourbaki scheme is a union of two skew lines.
\end{example}

\begin{example}[$m(\sigma) = 3$, $e = 2$, $\Bour(\sigma) = 2$, $c_3 = 4$]\label[example]{ex:B2-mixed-degrees-c3-4}
We consider the following sequence:
$$
\sigma = (x_0^2 + x_1^2 + x_2^2 + x_3^2, x_3(x_2- x_3)(x_0 - x_1))
$$
with corresponding Jacobian matrix given by
$$
\nabla \sigma = \begin{pmatrix}
2x_0 & 2x_1 & 2x_2 & 2x_3\\
x_3(x_2 - x_3) & x_3(x_3-x_2) & x_3(x_0-x_1) & (x_0-x_1)(x_2 - 2x_3)
\end{pmatrix}.
$$
Here, the irreducible components of $(\Xi_\sigma)_1$ are three lines, given by the associated primes:
\begin{align*}
\fp_1 &= (x_2,x_3)\\
\fp_2 &= (x_0-x_1, x_3)\\
\fp_3 &= (x_2-x_3, x_0-x_1).
\end{align*}
Over $\fp_1$, we may choose $u = (2x_1)^{-1} \in R_{\fp_1}$ to perform the elementary operations and rewrite:
$$
(\nabla \sigma)_{\fp_1} \sim \begin{pmatrix}
2ux_0 & 1 & 2ux_2 & 2ux_3\\
x_3(x_2-x_3)(1 + u x_0) & 0 & x_3( (x_0-x_1)-2ux_2(x_3-x_2) ) & (x_0-x_1)(x_2 - 2x_3) - 2ux_3^2(x_3-x_2)
\end{pmatrix}
$$
Thus, it sends $e_2$ to $(1,0)$ and compute the cokernel by considering the remaining columns of the second row, namely
$$
(\mathcal{Q}_\sigma)_{\fp_1} \simeq \frac{R_{\fp_1}}{I}
$$
with $I = (f_1, f_2, f_3)$ and
\begin{align*}
f_1 &= x_3(x_2-x_3)\\
f_2 &= x_3( (x_0-x_1)-2ux_3(x_3-x_2))\\
f_3 &= (x_0-x_1)(x_2-2x_3)-2ux_3^2(x_3-x_2).
\end{align*}
Since $f_1 \in I$, $x_3(x_2-x_3) \in I$, and we may rewrite $f_2$ as
$$
f_2 = x_3 (x_0-x_1) + 2ux_2x_3(x_2-x_3) \in I \Rightarrow x_3(x_0-x_1) \in I,
$$
and therefore $x_3 \in I$. Now, turning to $f_3 \in I$ and removing terms with $x_3$, we obtain:
$$
f_3 \equiv x_2(x_0-x_1) \Rightarrow x_2 \in I,
$$
since $x_0-x_1$ is invertible. Thus, $I \simeq \fp_1 R_{\fp_1}$ and $\length(\mathcal{Q}_\sigma)_{\fp_1} = 1$.

Over $\fp_2=((x_0-x_1),x_3)$, we may choose the same elementary operations and thus
$$
(\mathcal{Q}_\sigma)_{\fp_2} \simeq \frac{R_{\fp_2}}{I}
$$
with $I = (f_1, f_2, f_3)$, the polynomials as before. Since $f_1 \in I$, we conclude:
$$
f_3 = (x_0-x_1)(x_2-2x_3) +2ux_3^2(x_2-x_3) \Rightarrow (x_0-x_1)(x_2-2x_3) \in I.
$$
Since $(x_2 - 2x_3)$ is invertible in $R_{\fp_2}$, we obtain $(x_0-x_1) \in I$. From this fact, since $x_2 - x_3$ is invertible and $f_1 \in R_{\fp_2}$, we obtain $x_3 \in I$, and thus $I \simeq \fp_2 R_{\fp_2}$ and $\length(\mathcal{Q}_\sigma)_{\fp_2} = 1$.

Over the prime $\fp_3$, we may also do the same elementary operations and consider the same generators for the ideal $I$. From $f_1 \in I$ we get $(x_2-x_3) \in I$, since $x_3$ is now invertible. Moreover, from $f_2 \in I$ we obtain:
$$
f_2 = x_3(x_0-x_1) + 2ux_2x_3(x_2-x_3) \Rightarrow x_3(x_0-x_1) \in I \Rightarrow (x_0-x_1) \in I,
$$
so that $I \simeq \fp_3 R_{\fp_3}$ and $\length(\mathcal{Q}_\sigma)_{\fp_3} = 1$. Thus, from the associativity formula $m(\sigma) = 3$. Moreover, $e = 2$ and thus $\Bour(\sigma) = 2$. Here, the sheaf $\calt_\sigma$ admits a free resolution below
$$
0 \rightarrow \mathcal{O}_{\mathbb{P}^3}(-4) \rightarrow \mathcal{O}_{\mathbb{P}^3}(-2)^2 \oplus \mathcal{O}_{\mathbb{P}^3}(-3) \rightarrow \calt_\sigma \rightarrow 0
$$
so that $c_3(\calt_\sigma) = 4$, and the associated Bourbaki scheme is a plane conic.
\end{example}

\begin{cor}\label[cor]{cor:class-unstable-mixeddegrees}
Let $\sigma$ be a normal sequence with $d_f = 1, d_g = 2$. If $m(\sigma) < 3$, then $\calt_\sigma$ is $\mu$-stable.
\end{cor}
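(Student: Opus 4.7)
My plan is to reduce $\mu$-stability of $\calt_\sigma$ to a statement about the initial degree $e = \indeg(\calt_\sigma)$, and then rule out low values of $e$ using the Bourbaki degree formula together with the extremal low initial degree bound from Theorem B.

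First I would note that since $\calt_\sigma$ is reflexive of rank two with $c_1(\calt_\sigma) = -d = -3$, we have $\mu(\calt_\sigma) = -3/2$. Any destabilizing subsheaf can be replaced by its saturation, which (being a reflexive rank one sheaf on $\mathbb{P}^3$) is a line bundle $\mathcal{O}_{\mathbb{P}^3}(-k)$. Thus $\mu$-stability is equivalent to the absence of any injection $\mathcal{O}_{\mathbb{P}^3}(-k) \hookrightarrow \calt_\sigma$ with $k \leq 1$, i.e., to $H^0(\calt_\sigma(k)) = 0$ for $k \leq 1$, which simply means $\indeg(\calt_\sigma) \geq 2$. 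So the corollary reduces to showing that $m(\sigma) \leq 3$ forces $e \geq 2$.

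For $m(\sigma) = 3$, this is exactly the content of \Cref{prop:mixeddegrees-m3-unstable}. So I only need to handle the cases $m(\sigma) \in \{0,1,2\}$. By \Cref{thmA} (c), $m(\sigma) \leq 3 < 7$ implies that $\sigma$ is not compressible, hence $e \geq 1$. Suppose for contradiction that $e = 1$. Then the Bourbaki degree formula from \Cref{def:Bour} gives
\begin{equation*}
\Bour(\sigma) = e(e-3) + 7 - m(\sigma) = 5 - m(\sigma) \in \{3,4,5\}.
\end{equation*}
On the other hand, if $\sigma$ were free we would have $\Bour(\sigma) = 0$, contradicting the above; so $\sigma$ is not free and \Cref{thmB}(a) applies, forcing $\Bour(\sigma) \leq 2$. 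This contradicts $\Bour(\sigma) \geq 3$, so $e \geq 2$ whenever $m(\sigma) \in \{0,1,2\}$.

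Combining the two cases, $m(\sigma) \leq 3$ implies $e \geq 2$, and hence $\calt_\sigma$ is $\mu$-stable. I do not expect any serious obstacle: the proof is essentially a packaging of \Cref{thmB}(a) with the Bourbaki degree formula for $m(\sigma) \in \{0,1,2\}$, and a direct appeal to \Cref{prop:mixeddegrees-m3-unstable} for $m(\sigma) = 3$. The only subtle point is the reduction of $\mu$-stability to $e \geq 2$, which uses reflexivity to justify saturating a destabilizing subsheaf to a line bundle.
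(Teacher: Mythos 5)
Your proof is correct and follows essentially the same route as the paper: both reduce $\mu$-stability to $e = \indeg(\calt_\sigma) \geq 2$ (since $\mu(\calt_\sigma) = -3/2$), rule out $e = 1$ for $m(\sigma) < 3$ by computing $\Bour(\sigma) = 5 - m(\sigma) > 2$ against Theorem~B(a), and invoke \Cref{prop:mixeddegrees-m3-unstable} for $m(\sigma) = 3$. Your explicit saturation argument for the stability--initial-degree equivalence and the non-freeness check before applying Theorem~B are minor elaborations the paper leaves implicit.
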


\begin{proof}
Since $\mu(\calt_\sigma) = -3/2$, then $\calt_\sigma$ is $\mu$-stable if and only if $e > 1$. For $m(\sigma) < 3$, $\Bour(\sigma) > 2$ and by \Cref{thmB}, $(a)$, $e \neq 1$.
\end{proof}

We summarize the main results of this subsection in the following theorem:
\phantomsection\label{thm:fourth}
\begin{theoremD}\label[theorem]{thmD}
\textit{Let $\sigma = (f,g)$ be a normal sequence with $d_f = 1, d_g = 2 $. Then, if we denote by $e = \indeg(\calt_\sigma)$:
\begin{itemize}
    \item[(a)] $m(\sigma) \leq 7$ and equality holds if and only if $\sigma$ is compressible;
    \item[(b)] The sequence $\sigma$ is free if and only if $m(\sigma) = 7$ or $5$, and each corresponds to $e$ being $0$ or $1$, respectively;
    \item[(c)] There are two cases of nearly-free sequences $\sigma$, both with $m(\sigma) = 4$, one where $\calt_\sigma$ is $\mu$-stable with $c_3(\calt_\sigma) = 1$ and another one where $\calt_\sigma$ is $\mu$-unstable with $c_3(\calt_\sigma) = 3$ (see \Cref{ex:nearly-free-mixed-degrees-e2} and \Cref{ex:nearly-free-mixed-degrees-e1});
    \item[(d)] If $m(\sigma) = 3$ then $\Bour(\sigma) = 2$, with a possible unstable case $(c_3 = 6)$ and stable cases with $c_3 \in \{0, 2, 4\}$. Among these, we have examples for the stable cases with $c_3 = 2$ (see \Cref{ex:B2-mixed-degrees-c3-2}) and $c_3 = 4$ (see \Cref{ex:B2-mixed-degrees-c3-4}).
    \item[(e)] If $m(\sigma) < 3$, then $\calt_\sigma$ is $\mu$-stable.
\end{itemize}}
\end{theoremD} 

\begin{proof}
Item $(a)$ is \Cref{prop:mixeddegrees-compressible} and item $(b)$ follows with \Cref{prop:class-mixeddegrees-m5}. Item $(c)$ is shown in \Cref{prop:class-mixeddegrees-m4}, and item $(d)$ is described in \Cref{prop:class-mixeddegrees-m3}. The stability result in $(e)$ is in \Cref{cor:class-unstable-mixeddegrees}.
\end{proof}

\printbibliography

\end{document}